\documentclass[11pt]{amsart}%
\usepackage{amsmath}
\usepackage{amssymb}
\usepackage{amsthm}
\usepackage[mathscr]{eucal}
\usepackage{mathrsfs}
\usepackage{psfrag}
\usepackage{fullpage}
\usepackage{color}
\usepackage{eucal}
\usepackage[dvips]{graphicx}
\usepackage{amsfonts}%
\usepackage{amsaddr}
\usepackage{enumerate}
\setcounter{MaxMatrixCols}{30}
\providecommand{\U}[1]{\protect\rule{.1in}{.1in}}
\newtheorem{proposition}{Proposition}[section]

\newtheorem{lemma}[proposition]{Lemma}

\newtheorem{definition}[proposition]{Definition}
\newtheorem{remark}[proposition]{Remark}
\newtheorem{example}[proposition]{Example}

\newtheorem{assumption}[proposition]{Assumption}

\newtheorem{prop}[proposition]{Proposition}

\newtheorem{ass}[proposition]{Assumption}

\newtheorem{lem}[proposition]{Lemma}

\newtheorem{rem}[proposition]{Remark}

\numberwithin{equation}{section}
\numberwithin{proposition}{section}

\newcommand{\eps}{\epsilon}
\newcommand{\R}{\mathbb{R}}

\newcommand{\bra}[1]{\left[#1\right]}
\newcommand{\cbra}[1]{\left\{#1\right\}}
\newcommand{\esp}[1]{E\left[#1\right]}
\newcommand{\espalt}[2]{E^{#1}\bra{#2}}
\newcommand{\condespalt}[3]{E^{#1}\bra{#2\ \big|\ #3}}
\newcommand{\var}[1]{\textrm{Var}\bra{#1}}
\newcommand{\varalt}[2]{\textrm{Var}^{#1}\bra{#2}}
\newcommand{\prob}{\mathbb{P}}
\newcommand{\qprob}{\mathbb{Q}}
\newcommand{\filt}{\mathbb{F}}
\newcommand{\filtg}{\mathbb{G}}
\newcommand{\filth}{\mathbb{H}}
\newcommand{\F}{\mathcal{F}}
\newcommand{\G}{\mathcal{G}}
\newcommand{\Hh}{\mathcal{H}}
\newcommand{\relent}[2]{H\left(#1\ \big| \ #2\right)}
\newcommand{\essinf}[2]{\textrm{essinf}_{#1}\left(#2\right)}
\newcommand{\esssup}[2]{\textrm{esssup}_{#1}\left(#2\right)}
\newcommand{\argmax}{\textrm{argmax}}
\newcommand{\argmin}{\textrm{argmin}}
\newcommand{\tm}{\tilde{\mathcal{M}}}

\newcommand{\N}{\mathbb{N}}

\newcommand{\basisn}{\left(\Omega^n,\F^n,\filt^n, \prob^n\right)}
\newcommand{\probspace}{\left(\Omega,\F,\prob\right)}
\newcommand{\probspacen}{\left(\Omega^n, \F^n, \prob^n\right)}
\newcommand{\reals}{\mathbb{R}}
\newcommand{\nada}[1]{}

\numberwithin{equation}{section}
\numberwithin{proposition}{section}

\begin{document}

\title{Indifference pricing for Contingent Claims: Large Deviations Effects}

\author{Scott Robertson}
\address{Department of Mathematical Sciences\\
Carnegie Mellon University\\
Pittsburgh, PA 15213}
\email{scottrob@andrew.cmu.edu}
\thanks{S. Robertson is supported in part by the National Science Foundation
  under grant number DMS-1312419.}

\author{Konstantinos Spiliopoulos}
\address{Department of Mathematics \& Statistics\\
Boston University\\
Boston, MA 02215}
\email{kspiliop@math.bu.edu}
\thanks{K. Spiliopoulos is supported in part by the National Science Foundation under grant number DMS-1312124. The authors would like to thank the two referees for carefully reading the manuscript and for many helpful suggestions.}

\date{\today}

\begin{abstract}

We study utility indifference prices and optimal purchasing quantities for a non-traded contingent claim in an incomplete semi-martingale market with vanishing hedging errors. We make connections with the theory of large deviations. We concentrate on sequences of semi-complete markets where in the $n^{th}$ market, the claim $B_n$ admits the decomposition $B_n = D_n+Y_n$. Here, $D_n$ is replicable by trading in the underlying assets $S_n$, but $Y_n$ is independent of $S_n$. Under broad conditions, we may assume that $Y_n$ vanishes in accordance with a large deviations principle as $n$ grows. In this setting, for an exponential investor, we identify the limit of the average indifference price $p_n(q_n)$, for $q_n$ units of $B_n$, as $n\rightarrow \infty$.  We show that if $|q_n|\rightarrow\infty$, the limiting price typically differs from the price obtained by assuming bounded positions $\sup_n|q_n|<\infty$, and the difference is explicitly identifiable using large deviations theory.  Furthermore, we show that optimal purchase quantities occur at the large deviations scaling, and hence large positions arise endogenously in this setting.
%
\end{abstract}

\maketitle

\section{Introduction}\label{S:Introduction}

The goal of this paper is to study utility based indifference prices and optimal position sizes in incomplete semi-martingale markets with vanishing hedging errors. In particular, we make direct and novel connections between large deviations theory and both optimal positions and indifference prices. Furthermore, as heuristics indicate that large positions arise endogenously with vanishing hedging errors, this paper has the alternate goal of understanding the effects of such positions upon indifference prices. To this end, our main results show that in the presence of vanishing hedging errors, not only do large positions arise endogenously  through optimal purchasing, but they also lead to non-trivial, explicitly identifiable, corrections to the limiting indifference price that one would obtain for bounded positions.

The financial motivation for studying large investors in incomplete markets comes from the observed notational amounts outstanding in complex financial instruments such as derivatives, mortgage backed securities, and both life insurance and mortality contracts. For example, the over-the-counter derivatives market now has more than $700$ trillion notional outstanding (see \cite{BIS_Data}). These products are neither easily traded, nor replicable by investing in an underlying market, so it is natural to study them within the framework of utility based analysis in incomplete markets.

The idea for using utility functions to price random outcomes has a long history, dating back to the 18th century in early works by Bernoulli and Cramer (see \cite{MR2169807} for a brief historical overview). In the modern economics literature, utility based pricing for contingent claims in the absence of an underlying market dates at least back to \cite{pratt1964risk}, and has been used extensively.  In the presence of an underlying market where partial hedging of the claim is possible, the use of utility based pricing is generally first credited to \cite{HN1989}, and in recent years, it has attracted the attention of many authors. For a comprehensive review of utility based pricing in the mathematical finance literature see \cite{MR2547456}.

In the current paper, for a given utility function $U$, the (average bid) indifference price $p(x,q)$ for initial capital $x$ and $q$ units of a contingent claim $B$ is defined through the balance equation
\begin{equation}\label{eq: indiff_px_a}
u(x-qp(x,q),q) = u(x,0).
\end{equation}
Above, for any $x',q'\in\reals$, $u(x',q')$ is the optimal utility an investor with utility function $U$ can achieve, starting with initial capital $x'$ and $q'$ units of $B$, by trading in the underlying market. As is well known,  $p(x,q)$ typically does not admit an explicit formula and hence some approximation is necessary.  Here, we consider the approximation when $q$ is large and when the hedging error associated with $B$ is small.

That position size is intimately connected to hedging error comes from the simple observation that in a complete market, where there is only one fair price $d$ for a given claim, if one is able to purchase claims for a price $\tilde{p}\neq d$ then it is optimal to take an \emph{infinite} position.  Clearly, complete markets are an idealization of reality, and for practical purposes one cannot take an infinite position.  However, this idea indicates that for small hedging errors, large positions should arise endogenously.  Indeed, this is the underlying motivation for the indifference price approximations in the basis risk models of \cite{davis1997opi,MR1926237, Robertson_2012}, where the traded and non-traded assets are closely correlated.

In a general incomplete market with underlying tradable assets $S$, it is difficult to precisely define the ``hedging error'' for the claim $B$. However, such a definition is possible for semi-complete markets. Here, $B$ admits the decomposition $B=D+Y$, where $D$ is replicable by trading in $S$, but $Y$ is independent of $S$. As such, $Y$ represents the hedging error associated with $B$. Semi-complete models were introduced in \cite{MR2011941}, and have been successfully used in settings ranging from the valuation of stop loss contracts (\cite{MR1968945}), to pricing derivatives in energy markets (\cite{Benedetti2015}). To consider the case of vanishing hedging errors, in this paper we embed the semi-complete market into a sequence of semi-complete markets, indexed by $n$, and study the behavior as $n\rightarrow\infty$ assuming the unhedgeable component $Y_n$ vanishes.

An important example of a semi-complete market with small hedging error is a large financial market, where a sequence of risky assets is theoretically available for trading, but, for practical purposes, one must trade in the first $n$ assets. For each $n$ the market is incomplete, as contingent claims may depend upon all the sources of uncertainty, but the ability to hedge improves as $n$ increases. The notion of large markets was introduced in \cite{MR1348197} and since then, several papers have studied theoretical questions related to asymptotic arbitrage and extending the fundamental theorems of asset pricing, see \cite{MR1806101,MR1687136, MR1785165}. In applications, these types of models frequently appear in the insurance industry, see \cite{BEM_2012,MR2178505,Brennan_Schwartz_79,Milevsky2001299,MR2187311} amongst many others. Therefore, after introducing the abstract semi-complete setting in Section \ref{S:SC}, in Section \ref{S:LM_Ex} we present in detail a large market example used in the insurance industry, where assets are geometric Brownian motions (with arbitrary correlations) and where the claim is the sum of independent components.

With these examples and definitions as starting points, in this paper we seek, for an exponential investor, to identify limiting indifference prices and optimal purchase quantities in a sequence of semi-complete markets with vanishing unhedgeable components. Indifference prices are defined as in \eqref{eq: indiff_px_a} and optimal purchase quantities are defined as in \cite{MR2212897, MR3292128} where, for a given arbitrage free price $\tilde{p}$ and initial capital $x$, the optimal position $\hat{q}(x,\tilde{p})$ is that which maximizes $u(x-q\tilde{p},q)$ over all $q$.  In other words, assuming the investor can initially make a one-time purchase of an arbitrary amount of $B$ for a fixed unit price of $\tilde{p}$, $\hat{q}(x,\tilde{p})$ is the optimal amount of $B$ to purchase, taking into account both her preferences and the fact that for a given purchase amount $q$ (which reduces her initial capital by $q\tilde{p}$) she is able to trade in the underlying market.

The novelty of this current work stems from the fact that we recognize a natural and deep relationship between large deviations theory (see \cite{MR1619036,MR722136} for classical manuscripts) and the optimal investment problem for an exponential investor. However, in many cases, non-trivial refinements of the standard results in the large deviations literature are needed. To help motivate our results, we now briefly outline the main argument.  First, for a fixed semi-complete market where $B=D+Y$, Proposition \ref{prop: opt_invest_no_n} below proves for an exponential investor with risk aversion $a>0$ that the indifference price $p(x,q)$ is independent of $x$ and, writing $p(q)$ for $p(x,q)$, satisfies
\begin{equation*}
p(q) = d + \hat{p}(q);\qquad \hat{p}(q)\triangleq - \frac{1}{qa}\log\left(\espalt{\prob}{e^{-qa Y}}\right).
\end{equation*}
In the above, $d$ is the replication cost for the hedgeable portion $D$ and $q\hat{p}(q)$ is the certainty equivalent for $q$ units of the unhedgeable portion $Y$. Thus, in a sequence of semi-complete markets, where $B_n = D_n + Y_n$, the indifference price for $q_n$ units of $B_n$ is
\begin{equation*}
p^n(q_n) = d^n - \frac{1}{q_n a}\log\left(\espalt{\prob^n}{e^{-q_n a Y_n}}\right).
\end{equation*}
If hedging errors are vanishing, it is natural to assume the laws of $Y_n$ weakly converge to the Dirac measure at $0$.  In this instance, if one ignores position size, then, provided the limit exists, the indifference price converges to $d=\lim_{n\rightarrow\infty} d^n$. However, taking into account the position $q_n$, we see that the limiting behavior of $p^{n}(q_{n})$ depends on the limits of both $d^n$ and $\hat{p}_n(q_n)$. Under very broad conditions, one may assume the laws of $Y_n$ satisfy a large deviations principle (LDP) at scaling $r_n$ with rate function $I$ (uniquely minimized at $0$): sufficient conditions are given in Section \ref{S:ExistenceLDP}. In this instance, the limiting behavior of $p^n(q_n)$ depends on how the position $q_{n}$ scales with the large deviations scaling $r_{n}$. In particular, if $q_{n}\approx l r_n, l\in\reals$ then a non-trivial large deviations effect occurs for the limiting indifference price. Indeed, Varadhan's integral lemma (\cite[Section 4.3]{MR1619036}) yields
\begin{equation*}
\lim_{n\rightarrow\infty} p^n(q_n) = d  - \frac{1}{al}\sup_{y}\left(-aly - I(y)\right).
\end{equation*}
Therefore, large positions lead to non-negligible deviations in the limiting indifference price.  Proposition \ref{prop: ldp_indiff_px} makes the above argument precise, but the reason for the deviation is clear: as hedging errors vanish and position sizes increase, an investor has acute sensitivity to the rare events when hedging strategies fail. For exponential investors, the effect of this sensitivity on the indifference price is precisely identifiable through Varadhan's integral lemma, provided that position sizes are in accordance with the large deviation scaling, i.e. $q_{n}\approx l r_n$.

It turns out that large deviations theory also enables us to identify when position sizes, having been obtained optimally, are in accordance with the large deviations scaling. In Proposition \ref{prop: opt_purchase_range} we prove that for all reasonable arbitrage-free prices (see Proposition \ref{prop: opt_purchase_range} for a precise definition) $\tilde{p}^n\neq d^n$, the optimal purchasing quantity $\hat{q}_{n}$ is within the ``large deviations'' regime where $|\hat{q}_n|\approx l r_n, l\in(0,\infty)$. Hence, the large deviations regime is in a sense the natural regime for large investors who purchase optimally. The basic idea for the above result is that for any arbitrage free price $\tilde{p}^n$, the optimal quantity $\hat{q}_n = \hat{q}_n(\tilde{p}^n)$ to purchase is independent of the initial capital $x$ and satisfies the equality
\begin{equation*}
\tilde{p}^n  - d^n =  \frac{\espalt{\prob^n}{Y_n e^{-\hat{q}_n a Y_n}}}{\espalt{\prob^n}{e^{-\hat{q}_na Y_n}}}.
\end{equation*}
Under the LDP assumption, if $|\hat{q}_n|\approx l r_n$ for $l\in[0,\infty]$ then $\lim_{n\rightarrow\infty}(\tilde{p}^n-d^n) = \tilde{p} - d \in \argmax_{y}\left(-aly - I(y)\right)$ (see \cite[Theorem III.17]{MR1739680}).  From here, it is easy to see that if $l=0$ then necessarily $\tilde{p}=d$. Absent this case $|\hat{q}_n|/r_n > l > 0$. That $l=\infty$ also cannot happen requires a more technical proof but still holds true for reasonable prices $\tilde{p}^n\rightarrow\tilde{p}$.

The important point from a financial perspective is that for markets considered in this paper (e.g., large financial markets), there are non-trivial, explicitly identifiable effects on both indifference prices and optimal positions, which arise as $n\rightarrow\infty$, and are explained by the theory of large deviations.

The rest of the paper is organized as follows. In Section \ref{S:SC} we present in detail the general semi-complete framework for a fixed market. We discuss the optimal investment problem, derive the formula for the indifference price, and characterize both the range of arbitrage-free prices and the equation for the optimal purchasing quantity. In Section \ref{S:LM_Ex}, we specialize the results in the setting of large financial markets.  In Section \ref{S:Limit_LDP_no_n} we embed the semi-complete setting into a sequence of markets, and take $n\rightarrow\infty$, making precise connections to large deviations, assuming a LDP holds. Then, in Section \ref{S:ExistenceLDP} we provide general conditions under which the LDP does hold and discuss specific examples.  Lastly, there is an appendix where proofs of several technical results are given.

\section{The Semi-Complete Framework for a Fixed Model}\label{S:SC}

We now present the general semi-complete framework for a fixed model, explicitly identifying the range of arbitrage free prices, as well as both the indifference price and optimal purchase quantity for an exponential investor.  The proofs of all statements made within this section are given in Section \ref{S:SC_Proof} of the Appendix.

As mentioned in the introduction, semi-complete models are those for which the contingent claim $B$ admits the decomposition
\begin{equation}\label{eq: h_decomp}
B = D + Y,
\end{equation}
where $D$ is perfectly replicable by trading in the underlying market, and where $Y$ is ``completely unhedgeable'' in that $Y$ is independent of the underlying assets. To precisely define the semi-complete setup, we impose the following structure on the filtered probability space, assets and claims.  For notational ease, in this section we present results for a fixed semi-complete market.  Then, when considering limiting indifference prices and their connections to large deviations, we embed the semi-complete setup into a sequence of markets.

Let $\probspace$ denote a complete probability space.  We consider a finite time horizon $T$. There is additionally a filtration $\filt$ which admits the decomposition
\begin{assumption}\label{ass: filt}
\begin{equation*}
\filt = \filtg \vee \filth,
\end{equation*}
where $\G_T, \mathcal{H}_T\subset\F$ are $\prob$ independent and where additionally $\filtg$, $\filth$ satisfy the usual conditions, and hence \cite[Theorem 1]{MR671249} so does $\filt$.
\end{assumption}

Assume zero interest rates so that the riskless asset is identically equal to one. As for the risky assets, assume
\begin{assumption}\label{ass: asset_mkt}
With respect to $\filtg$ and $\prob$, $S = (S^1,...,S^d)$ is a d-dimensional, adapted, locally bounded semi-martingale. Furthermore, the $(\prob,\filtg; S)$-market is complete and arbitrage free in that
\begin{enumerate}[(1)]
\item There exists a unique probability measure $\qprob_0$, equivalent to $\prob$ (written $\qprob_0\sim \prob$) on $\G_T$, so that $S$ is a $(\qprob_0,\filtg)-$local martingale and such that the relative entropy $\relent{\qprob_0}{\prob\big |_{\G_T}} = \espalt{\qprob_0}{\log\left(d\qprob_{0}/d\prob\big|_{\G_T}\right)}$ is finite.
\item For every claim $\xi$ which is $\G_T$ measurable and such that $\espalt{\qprob_0}{|\xi|} < \infty$, there exists a unique $x\in\reals$ and $(\prob,\filtg;S)$-integrable (and hence $(\qprob_0,\filtg;S)$-integrable), trading strategy $\Delta = \cbra{\Delta^1,...,\Delta^d}$, where $\Delta^i_t$ denotes the dollars invested in $S^i$ at time $t$, so that $X^{\Delta}_\cdot = x + \int_0^\cdot \Delta_u dS_u$ is a $(\qprob_0,\filtg)$ martingale and such that $X^{\Delta}_T = \xi$, $\prob$-a.s.
\end{enumerate}
\end{assumption}
\begin{remark}\label{rem: cadlag} As the filtration $\filtg$ satisfies the usual conditions, there is a modification of the replicating strategy $X^{\Delta}$ with cadlag paths.  In the sequel, it will thus be assumed that $X^{\Delta}$ is cadlag.
\end{remark}
Regarding the contingent claim $B$, we assume
\begin{assumption}\label{ass: claim_decomp}
$B$ admits the decomposition in \eqref{eq: h_decomp} where $D$ is $\G_T$-measurable and $Y$ is $\Hh_T$ measurable.
\end{assumption}

\subsection{Optimal Investment Problem}\label{SS:opt_invest}

To properly formulate the optimal investment problem, we now state some consequences of Assumptions \ref{ass: filt} and \ref{ass: asset_mkt}.  First of all, we get that $(\prob,\filtg)$-martingales are also $(\prob,\filt)$-martingales and that $S$ is a $(\prob,\filt)$-special semi-martingale. Thus, it makes sense to integrate with $(\prob, \filt;S)$-integrable, $\filt$-predictable processes. Moreover, Assumption \ref{ass: filt} and \cite[Proposition 8]{MR580121} imply that $\filtg$-predictable, $(\prob,\filtg;S)$-integrable processes are also $\filt$-predictable, $(\prob,\filt;S)$-integrable processes and that the stochastic integrals $\int_0^\cdot \Delta_udS_u$ coincide under $\filtg,\filt$.

Now, consider an investor with the exponential utility function $U(x) = -(1/a)e^{-a x}, x\in\R$ where $a > 0$ is the absolute risk aversion. In order to define the class of allowable trading strategies it is first necessary to define the dual class of local martingale measures. To this end define
\begin{equation}\label{eq: mgle_meas_no_n_class}
\mathcal{M} = \cbra{ \qprob \sim \prob \textrm{ on } \F_T : S \textrm{ is a } (\qprob,\filt) \textrm{-local martingale} }.
\end{equation}

Note that we are working with $\qprob\sim\prob$ (and not just $\qprob\ll\prob$): that we can do this follows from Proposition \ref{prop: opt_invest_no_n} below which explicitly shows that the dual optimal measure is equivalent to $\prob$. For exponential utility, the subset of $\mathcal{M}$ with finite relative entropy with respect to $\prob$ plays an important role.  Thus, define
\begin{equation}\label{eq: mgle_meas_no_n_claa_re}
\tm = \cbra{\qprob\in \mathcal{M} : \relent{\qprob}{\prob} < \infty}.
\end{equation}

As Lemma \ref{lem: mart_meas_structure_2} in Appendix \ref{S:SC_Proof} shows, Assumptions \ref{ass: filt} and \ref{ass: asset_mkt} ensure that $\tm\neq\emptyset$.  As is well known, this fact is intimately related to the lack of arbitrage in the $(\prob,\filt;S)$-market, see \cite[Theorem 8.2.1]{MR2200584}.  Now, recall that a trading strategy is represented by $\Delta = \cbra{\Delta^1,...,\Delta^d}$, where $\Delta^i_t$ denotes the dollars invested in $S^i$ at time $t$. We shall denote
 by  $\mathcal{A}$ the set of $\filt$-allowable trading strategies $\Delta$. In particular, $\Delta$ is allowable if it is $\filt$ predictable,  $(\prob,\filt;S)$-integrable, and if the resultant wealth process $X^{\Delta}$ is a $(\qprob,\filt)$-super-martingale for all $\qprob\in\tm$.

For an initial capital $x$ and position size $q$ in $B$, the value function for the investor is given by
\begin{equation}\label{eq: opt_invest_q}
u(x,q) = \sup_{\Delta\in\mathcal{A}} \esp{U(X^\Delta_T + qB)};\qquad X^{\Delta}_\cdot = x + \int_0^\cdot \Delta_udS_u.
\end{equation}

Before handling the case for general claim sizes $q$, we first identify the value function $u(x,0)$: i.e. without the contingent claim. As for exponential utility $u(x,0)=e^{-ax}u(0,0)$ it suffices to consider $x=0$. To this end we have
\begin{proposition}\label{prop: no_claim}
Let Assumptions \ref{ass: filt} and \ref{ass: asset_mkt} hold.  Then there exists an optimal $\Psi\in \mathcal{A}$ to the optimization problem in \eqref{eq: opt_invest_q} for $q=x=0$. In fact, $\Psi$ is $\filtg$-predictable, $(\prob;\filtg;S)$-integrable and satisfies the first order conditions
\begin{equation}\label{eq: qprob_0_ident}
\frac{d\qprob_0}{d\prob}\bigg|_{\G_T} = \frac{e^{-aX^{\Psi}_T}}{\espalt{}{e^{-aX^\Psi_T}}}.
\end{equation}
Lastly, $X^\Psi$ is a $\qprob$-uniformly integrable $(\qprob,\filt)$-martingale for all $\qprob\in\tm$.
\end{proposition}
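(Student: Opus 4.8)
The plan is to exploit the fact that, absent the claim, the objective in \eqref{eq: opt_invest_q} does not really involve $\filth$: since $S$ is $\filtg$-adapted and the $(\prob,\filtg;S)$-market is complete, $u(0,0)$ should coincide with the value of the associated complete market driven by $\filtg$ alone, which is solved explicitly by convex duality with the unique martingale measure $\qprob_0$. Accordingly I would first build the candidate. Set $Z\Def \frac{d\qprob_0}{d\prob}\big|_{\G_T}$ and $\xi\Def -(1/a)\log(\lambda Z)$, with $\lambda>0$ chosen so that $\espalt{\qprob_0}{\xi}=0$. By Assumption \ref{ass: asset_mkt}(1), $\relent{\qprob_0}{\prob\big|_{\G_T}}=\esp{Z\log Z}<\infty$, and since $x(\log x)^-\le 1/e$ for $x>0$ also $\espalt{\qprob_0}{(\log Z)^-}=\esp{Z(\log Z)^-}\le 1/e$; hence $\xi\in L^1(\qprob_0)$ and $\log\lambda=-\relent{\qprob_0}{\prob\big|_{\G_T}}$. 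Applying the replication statement in Assumption \ref{ass: asset_mkt}(2) to the $\G_T$-measurable, $\qprob_0$-integrable claim $\xi$ produces a unique $x_0\in\reals$ and a $(\prob,\filtg;S)$-integrable (so $\filtg$-predictable) strategy $\Psi$ such that $X^\Psi_\cdot = x_0 + \int_0^\cdot \Psi_u dS_u$ is a $(\qprob_0,\filtg)$-martingale with $X^\Psi_T=\xi$; this martingale is closed on the right by $\xi\in L^1(\qprob_0)$, hence uniformly integrable, and $x_0=\espalt{\qprob_0}{\xi}=0$. By \cite{MR580121} (cf.\ Remark \ref{rem: cadlag} and \S\ref{SS:opt_invest}), $\Psi$ is also $\filt$-predictable and $(\prob,\filt;S)$-integrable and the two stochastic integrals agree, so $X^\Psi$ is a single cadlag, $\filtg$-adapted process. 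Finally, $\lambda Z=e^{-aX^\Psi_T}$ together with $\esp{Z}=1$ give $\lambda=\esp{e^{-aX^\Psi_T}}<\infty$, which is exactly \eqref{eq: qprob_0_ident}.

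For optimality I would use the Fenchel conjugate of $U(x)=-(1/a)e^{-ax}$, namely $\tilde U(y)=(y/a)(\log y-1)$ for $y>0$, which gives $U(w)\le \tilde U(y)+wy$ for all $w\in\reals$, $y>0$, with equality exactly when $w=-(1/a)\log y$. Let $\qprob_*$ be the measure on $\F_T$ with $\frac{d\qprob_*}{d\prob}=Z$; by Assumption \ref{ass: filt} (the $\prob$-independence of $\G_T$ and $\Hh_T$) and the $\G_T$-measurability of $Z$, the laws of $\G_T$ and $\Hh_T$ remain independent under $\qprob_*$, $S$ is a $(\qprob_*,\filtg)$- and hence $(\qprob_*,\filt)$-local martingale, and $\relent{\qprob_*}{\prob}=\relent{\qprob_0}{\prob\big|_{\G_T}}<\infty$, so $\qprob_*\in\tm$ (Lemma \ref{lem: mart_meas_structure_2}). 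Taking $y=\lambda Z=e^{-aX^\Psi_T}$, for every $\Delta\in\mathcal{A}$ one has $U(X^\Delta_T)\le \tilde U(\lambda Z)+\lambda Z X^\Delta_T$ pointwise, and $\tilde U(\lambda Z)\in L^1(\prob)$ since $\esp{Z\log Z}<\infty$ and $w\log w\ge -1/e$. As $X^\Delta$ is a $(\qprob_*,\filt)$-supermartingale with $X^\Delta_0=0$, we get $X^\Delta_T\in L^1(\qprob_*)$ and $\espalt{\qprob_*}{X^\Delta_T}\le 0$, so integrating gives $\esp{U(X^\Delta_T)}\le \esp{\tilde U(\lambda Z)}+\lambda\,\espalt{\qprob_*}{X^\Delta_T}\le \esp{\tilde U(\lambda Z)}$. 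For $\Delta=\Psi$ the conjugate inequality is an equality by construction of $\xi$, and $\espalt{\qprob_*}{X^\Psi_T}=\espalt{\qprob_0}{\xi}=0$ (as $\xi$ is $\G_T$-measurable and $\qprob_*=\qprob_0$ there), whence $\esp{U(X^\Psi_T)}=\esp{\tilde U(\lambda Z)}$. Granting $\Psi\in\mathcal{A}$ (next paragraph), this shows $u(0,0)=\esp{U(X^\Psi_T)}$ and that $\Psi$ is optimal.

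The remaining, and I expect hardest, point is that $X^\Psi$ is a $\qprob$-uniformly integrable $(\qprob,\filt)$-martingale for every $\qprob\in\tm$, which in particular supplies the missing $\Psi\in\mathcal{A}$. A useful preliminary is that every $\qprob\in\tm$ restricts to $\qprob_0$ on $\G_T$: localizing the locally bounded $S$ by $\filtg$-stopping times, a bounded $(\qprob,\filt)$-local martingale that is $\filtg$-adapted is a $(\qprob,\filtg)$-martingale, so $S$ is a $(\qprob,\filtg)$-local martingale under $\qprob\big|_{\G_T}$, which has finite relative entropy (it is dominated by $\relent{\qprob}{\prob}<\infty$) and hence equals $\qprob_0$ by uniqueness in Assumption \ref{ass: asset_mkt}(1). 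Two consequences: $X^\Psi=\int\Psi dS$ is a $(\qprob,\filt)$-local martingale for every $\qprob\in\tm$, and, since $X^\Psi_t=\espalt{\qprob_0}{\xi\mid\G_t}$ is $\G_t$-measurable with $\qprob=\qprob_0$ on $\G_T\supset\G_t$, $\espalt{\qprob}{X^\Psi_t}=\espalt{\qprob_0}{X^\Psi_t}=0$ for \emph{every} $t$ and \emph{every} $\qprob\in\tm$. It then remains to show $X^\Psi$ is a $(\qprob,\filt)$-supermartingale for each $\qprob\in\tm$ --- the standard ``admissibility of the optimizer'' fact for exponential utility --- which I would deduce from the variational characterization of $\qprob_*$ as the minimal-entropy element of $\tm$: using $\qprob\big|_{\G_T}=\qprob_0=\qprob_*\big|_{\G_T}$ one has $\relent{\qprob}{\prob}=\relent{\qprob}{\qprob_*}+\espalt{\qprob}{\log Z}=\relent{\qprob}{\qprob_*}+\relent{\qprob_0}{\prob\big|_{\G_T}}\ge\relent{\qprob_0}{\prob\big|_{\G_T}}$, with equality only at $\qprob=\qprob_*$, so $\qprob_*$ is the minimizer; combining this, via the stability of $\tm$ under pasting at $\filt$-stopping times, upgrades the first-order (directional-derivative) inequality into the conditional supermartingale inequality $\espalt{\qprob}{X^\Psi_t\mid\F_s}\le X^\Psi_s$. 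Given this, $\espalt{\qprob}{X^\Psi_t}$ is nonincreasing in $t$ and vanishes at $t=0$ and $t=T$, hence is identically $0$, so the supermartingale $X^\Psi$ has constant expectation and is therefore a true $(\qprob,\filt)$-martingale; being closed on the right by $\xi\in L^1(\qprob)$ it is uniformly integrable. The delicate work, on which I would concentrate, is the pasting construction and the justification of differentiating $\varepsilon\mapsto\relent{(1-\varepsilon)\qprob_*+\varepsilon\qprob}{\prob}$ under the expectation.
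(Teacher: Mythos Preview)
Your approach is correct and coincides with the paper's: build $\Psi$ from the complete $(\prob,\filtg;S)$-market so that \eqref{eq: qprob_0_ident} holds, identify the extension of $\qprob_0$ to $\F_T$ as the minimal-entropy element of $\tm$ (via $\qprob|_{\G_T}=\qprob_0$ and conditional Jensen), and then use stability of $\tm$ under concatenation at $\filt$-stopping times to upgrade $X^\Psi$ to a true $(\qprob,\filt)$-martingale for every $\qprob\in\tm$. Two differences of execution are worth noting. First, the paper does not rederive the concatenation argument: it observes that $\log\bar Z_t=-aX^\Psi_t+\mathrm{const}$ and then quotes \cite[Lemma~4.2]{MR1891731} directly to conclude that $(X^\Psi_{\tau})_{\tau\in\mathcal{T}_T}$ is $\qprob$-uniformly integrable for each $\qprob\in\tm$, after which ``local martingale $+$ class DL $\Rightarrow$ martingale'' finishes; your plan to reprove this via pasting and the directional derivative of the entropy is exactly the content of that lemma, so you have correctly located where the work lies. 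Second, your endgame --- the observation that $\espalt{\qprob}{X^\Psi_t}=\espalt{\qprob_0}{X^\Psi_t}=0$ for every $t$ (from $\G_t$-measurability of $X^\Psi_t$ and $\qprob|_{\G_T}=\qprob_0$), so a supermartingale with constant mean is a martingale --- is a pleasant shortcut the paper does not use. One point you passed over: the assertion that $X^\Psi=\int\Psi\,dS$ is a $(\qprob,\filt)$-\emph{local} martingale for each $\qprob\in\tm$ is not automatic, since the stochastic integral against a local martingale is a priori only a sigma-martingale; the paper supplies this by showing $\sup_{s\le t}|X^\Psi_s|$ is $(\qprob_0,\filtg)$-locally integrable, transferring this to $\qprob$ via $\qprob|_{\G_T}=\qprob_0$ (Lemma~\ref{lem: mart_meas_structure}), and invoking \cite[Corollary~7.3.8]{MR2200584}.
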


\subsection{Identification of indifference prices}

For the exponential utility $U(x) = -(1/a)e^{-ax}$, it is well known that the indifference price from \eqref{eq: indiff_px_a} does not depend upon the initial capital and hence we write $p(q)$ for the price and, as above, consider $x=0$ throughout. Furthermore, $u(-qp(q),q) = e^{aq p(q)}u(0,q)$, and hence \eqref{eq: indiff_px_a} implies
\begin{equation}\label{eq: indif_val_funct_no_n}
p(q) = -\frac{1}{aq}\log\left(\frac{u(0,q)}{u(0,0)}\right).
\end{equation}
We now identify the value function $u(0,q)$, along with the optimal trading strategy and optimal local martingale measure.  Heuristically, as $D$ is replicable with some initial capital $d$, it should follow that $p(q) = d + p(q;Y)$ where $p(q;Y)$ is the indifference price for $q$ units of $Y$.  As $Y$ is independent of $S$ it should then follow that trading in $S$ does not matter and hence $p(q;Y)$ coincides with the (average) certainty equivalent $\hat{p}(q;Y)$ for $Y$, defined through
\[
U(0) = \espalt{}{U(q Y-q\hat{p}(q;Y))},
\]
 which takes the form $\hat{p}(q,Y) = -1/(qa)\log\left(\espalt{}{e^{-qaY}}\right)$.  To make this argument precise one must overcome the fact that even though $S$ is independent of $Y$, the admissible trading strategies $\Delta$ need only be $\filt$-predictable and hence are not independent of $Y$.  However, as will be seen, this essentially makes no difference.

Define the cumulant generating function $\Lambda$ for $Y$ by
\begin{equation}\label{eq: h2_cgf}
\Lambda(\lambda) = \log\left(\espalt{}{e^{\lambda Y}}\right);\qquad \lambda\in\reals.
\end{equation}

In view of Assumption \ref{ass: asset_mkt} and the formula for $\hat{p}(q;Y)$, we impose the following natural integrability conditions upon $D$ and $Y$
\begin{assumption}\label{ass: abstract_int} For some $\epsilon > 0$, $\espalt{\qprob_0}{|D|^{1+\eps}} < \infty$.  For all $\lambda\in\reals$, $\Lambda(\lambda) < \infty$.
\end{assumption}

\begin{remark} Note that Assumption \ref{ass: abstract_int} does not require $B$ to be bounded, but does require some integrability. In particular, the unhedgeable component $Y$ is assumed to have exponential moments of all orders and the slightly stronger integrability condition required of $D$, as opposed to that in Assumption \ref{ass: asset_mkt}, is needed to ensure that $(\qprob_0,\filtg)$-martingales are $(\qprob,\filt)$-martingales as well, for any $\qprob\in\tm$.
\end{remark}

Under Assumption \ref{ass: abstract_int} we have the following result
\begin{proposition}\label{prop: opt_invest_no_n}
Let Assumptions \ref{ass: filt}, \ref{ass: asset_mkt}, \ref{ass: claim_decomp} and \ref{ass: abstract_int} hold. Then, for each $q\in\reals$:
\begin{equation}\label{eq: u_no_n_vf}
\frac{u(0,q)}{u(0,0)} = e^{-qa d}\espalt{}{e^{-qa Y}},
\end{equation}
where $d$ is the initial capital required to replicate $D$. Thus, the indifference price $p$ takes the form
\begin{equation}\label{eq: indif_price_no_n}
\begin{split}
p(q) &= d - \frac{1}{qa}\log\left(\espalt{}{e^{qaY}}\right) = d - \frac{1}{qa}\Lambda(-qa).
\end{split}
\end{equation}
The $\filt$-optimal (in fact $\filtg$-predictable, $(\prob,\filtg;S)$-integrable) trading strategy $\hat{\Delta}\in\mathcal{A}$ is given by $\hat{\Delta} = -q\Delta_1 + \Psi$ where $\Delta_1$ is the replicating strategy for $D$ and $\Psi$ is from Proposition \ref{prop: no_claim}. The resultant wealth process $X^{\hat{\Delta}}$ is a $(\qprob,\filt)$-martingale for all $\qprob\in\tm$. Lastly, the optimal local martingale measure $\hat{\qprob}\in\tm$ takes the form
\begin{equation}\label{eq: opt_meas_no_n}
\frac{d\hat{\qprob}}{d\prob} = \frac{d\qprob_0}{d\prob}\bigg|_{\G_T} \frac{e^{-qaY}}{\espalt{}{e^{-qa Y}}}.
\end{equation}

\end{proposition}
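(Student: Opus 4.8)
The plan is a verification argument built around the candidate optimizers $\hat\Delta=\Psi-q\Delta_1$ and $\hat\qprob$ already exhibited in the statement, where $\Delta_1$ is the ($\filtg$-predictable, $(\prob,\filtg;S)$-integrable) replicating strategy for $D$ at cost $d$ and $\Psi$ comes from Proposition~\ref{prop: no_claim}. Since $U(x)=-(1/a)e^{-ax}$ gives $u(x,q)=e^{-ax}u(0,q)$, I will take $x=0$. The structural point to exploit is that, writing $X^{\Delta+q\Delta_1}$ for the gains process $\int_0^{\cdot}(\Delta+q\Delta_1)\,dS$ started at $0$, one has $X^{\Delta}_T+qB=qd+X^{\Delta+q\Delta_1}_T+qY$ for every $\Delta$: the $S$-hedgeable part $qD$ is absorbed into a shift of the strategy plus the deterministic term $qd$, leaving only the $S$-independent, $\Hh_T$-measurable term $qY$. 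The independence $\G_T\perp\Hh_T$ under $\prob$ (Assumption~\ref{ass: filt}) will be used repeatedly to factor expectations, since $X^{\Psi}_T$ is $\G_T$-measurable by Proposition~\ref{prop: no_claim}.

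\emph{Lower bound.} I would first check that $\hat\Delta\in\mathcal{A}$: by Proposition~\ref{prop: no_claim}, $X^{\Psi}$ is a $(\qprob,\filt)$-martingale for every $\qprob\in\tm$, and the stronger integrability $\espalt{\qprob_0}{|D|^{1+\eps}}<\infty$ in Assumption~\ref{ass: abstract_int} is precisely what lets one conclude (via the measure-structure results of the Appendix, cf.\ Lemma~\ref{lem: mart_meas_structure_2}) that $\int_0^{\cdot}(\Delta_1)\,dS$ is likewise a $(\qprob,\filt)$-martingale for every $\qprob\in\tm$; hence $X^{\hat\Delta}=X^{\Psi}-q\int_0^{\cdot}(\Delta_1)\,dS$ is a genuine $(\qprob,\filt)$-martingale, in particular a supermartingale, it is $\filtg$-predictable and $(\prob,\filtg;S)$-integrable, and so $\hat\Delta\in\mathcal{A}$. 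Evaluating the objective at $\hat\Delta$ using $X^{\hat\Delta}_T+qB=qd+X^{\Psi}_T+qY$, independence, and $\espalt{}{U(X^{\Psi}_T)}=u(0,0)$, one gets $\espalt{}{U(X^{\hat\Delta}_T+qB)}=-(1/a)e^{-qad}\espalt{}{e^{-aX^{\Psi}_T}}\espalt{}{e^{-qaY}}=u(0,0)e^{-qad}\espalt{}{e^{-qaY}}$, which is finite (as $u(0,0)<0$ and $\Lambda(-qa)<\infty$ by Assumption~\ref{ass: abstract_int}) and is a lower bound for $u(0,q)$.

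\emph{Upper bound; the main obstacle.} Here I would first verify that $\hat\qprob\in\tm$. Its density \eqref{eq: opt_meas_no_n} factors as the $\G_T$-measurable density $\frac{d\qprob_0}{d\prob}\big|_{\G_T}$ times the $\Hh_T$-measurable density $e^{-qaY}/\espalt{}{e^{-qaY}}$, the latter a.s.\ positive and integrable because $\Lambda<\infty$ on all of $\reals$; hence $\hat\qprob\sim\prob$ on $\F_T$, the $\G_T$-marginal of $\hat\qprob$ is $\qprob_0$ (so $S$ is a $(\hat\qprob,\filtg)$-local martingale), and $\G_T\perp\Hh_T$ persists under $\hat\qprob$, so the reasoning behind Assumption~\ref{ass: filt} (\cite[Theorem~1]{MR671249}, \cite[Proposition~8]{MR580121}) applies under $\hat\qprob$ to give that $S$ is a $(\hat\qprob,\filt)$-local martingale; finally, relative entropy is additive over this independent product, with $\relent{\qprob_0}{\prob\big|_{\G_T}}<\infty$ and $\relent{\qprob_1}{\prob\big|_{\Hh_T}}=-qa\Lambda'(-qa)-\Lambda(-qa)<\infty$ for $\qprob_1$ the measure with density $e^{-qaY}/\espalt{}{e^{-qaY}}$, so $\relent{\hat\qprob}{\prob}<\infty$ and $\hat\qprob\in\tm$. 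Then, for arbitrary $\Delta\in\mathcal{A}$ I put $\Delta'':=\Delta+q\Delta_1$; combining \eqref{eq: opt_meas_no_n} with \eqref{eq: qprob_0_ident} gives $\frac{d\hat\qprob}{d\prob}=e^{-aX^{\Psi}_T}e^{-qaY}/(\espalt{}{e^{-aX^{\Psi}_T}}\espalt{}{e^{-qaY}})$, so a change of measure together with $X^{\Delta}_T+qB=qd+X^{\Delta''}_T+qY$ yields $\espalt{}{e^{-a(X^{\Delta}_T+qB)}}=e^{-qad}\espalt{}{e^{-aX^{\Psi}_T}}\espalt{}{e^{-qaY}}\,\espalt{\hat\qprob}{e^{-a(X^{\Delta''}_T-X^{\Psi}_T)}}$. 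Since $X^{\Delta''}-X^{\Psi}=X^{\Delta}+q\int_0^{\cdot}(\Delta_1)\,dS-X^{\Psi}$ is a $(\hat\qprob,\filt)$-supermartingale null at $0$ (the supermartingale $X^{\Delta}$ plus the two martingales from the lower-bound step, using $\hat\qprob\in\tm$), one has $\espalt{\hat\qprob}{X^{\Delta''}_T-X^{\Psi}_T}\le 0$, and Jensen's inequality for $t\mapsto e^{-at}$ gives $\espalt{\hat\qprob}{e^{-a(X^{\Delta''}_T-X^{\Psi}_T)}}\ge 1$. Hence $\espalt{}{e^{-a(X^{\Delta}_T+qB)}}\ge e^{-qad}\espalt{}{e^{-aX^{\Psi}_T}}\espalt{}{e^{-qaY}}$, i.e.\ $\espalt{}{U(X^{\Delta}_T+qB)}\le u(0,0)e^{-qad}\espalt{}{e^{-qaY}}$ for every $\Delta\in\mathcal{A}$.

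Combining the two bounds then yields \eqref{eq: u_no_n_vf}, shows $\hat\Delta$ and $\hat\qprob$ are the optimizers, and (from the lower-bound step) that $X^{\hat\Delta}$ is a $(\qprob,\filt)$-martingale for every $\qprob\in\tm$. The indifference price then follows from \eqref{eq: indif_val_funct_no_n}: $p(q)=-\frac{1}{aq}\log\big(u(0,q)/u(0,0)\big)=-\frac{1}{aq}\big(-qad+\Lambda(-qa)\big)=d-\frac{1}{aq}\Lambda(-qa)$, which is \eqref{eq: indif_price_no_n}. The only genuinely delicate ingredients are the two martingale-transfer statements — that $\int_0^{\cdot}(\Delta_1)\,dS$ is a true $(\qprob,\filt)$-martingale for every $\qprob\in\tm$, and that $\hat\qprob\in\tm$ — both of which rest on the extra integrability imposed on $D$ in Assumption~\ref{ass: abstract_int}; the rest is the change-of-measure and Jensen bookkeeping of the upper-bound step.
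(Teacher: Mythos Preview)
Your argument is correct and largely parallels the paper's. The lower bound, the verification that $\hat\Delta\in\mathcal{A}$, and the verification that $\hat\qprob\in\tm$ are exactly the ingredients the paper isolates (Lemmas~\ref{lem: opt_q_ok} and~\ref{lem: opt_delta_ok} in the appendix; note that the martingale property of $\int_0^\cdot(\Delta_1)\,dS$ under every $\qprob\in\tm$ is the content of Lemma~\ref{lem: opt_delta_ok}, not Lemma~\ref{lem: mart_meas_structure_2}). The one place you and the paper diverge is the upper bound: the paper computes the dual value $\espalt{\hat\qprob}{qB+\log Z^{\hat\qprob}_T}$ explicitly, checks it coincides with the primal value at $\hat\Delta$, and then invokes the standard exponential-utility duality to conclude optimality of both. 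You instead bypass the abstract duality and prove the upper bound directly, via the change of measure $\espalt{}{e^{-a(X^\Delta_T+qB)}}=e^{-qad}\espalt{}{e^{-aX^\Psi_T}}\espalt{}{e^{-qaY}}\,\espalt{\hat\qprob}{e^{-a(X^{\Delta''}_T-X^\Psi_T)}}$ followed by Jensen applied to the $(\hat\qprob,\filt)$-supermartingale $X^{\Delta''}-X^\Psi$. Since $X^{\Delta''}_T-X^\Psi_T\in L^1(\hat\qprob)$ (it is the terminal value of a supermartingale), Jensen is legitimate and the bound $\espalt{\hat\qprob}{e^{-a(X^{\Delta''}_T-X^\Psi_T)}}\ge 1$ goes through. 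Your route is a bit more self-contained; the paper's route is shorter once one is willing to quote the exponential duality, but the substance is the same verification argument.
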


\subsection{Arbitrage Free Prices}\label{SS:ArbitrageFreePrices_no_n}

To connect limiting indifference prices to optimal purchase quantities it is of interest to explicitly characterize the range of arbitrage free prices for $B$, as is done in Lemma \ref{lem: arb_free_range_no_n}. Recall that for $Y\neq 0$, the range of arbitrage free prices for $B$ is given by $I = \left(\underline{b}, \bar{b}\right)$ where
\begin{equation}\label{eq: arb_free_no_n_abstract}
\underline{b} = \inf_{\qprob\in\mathcal{M}}\espalt{\qprob}{B};\qquad \bar{b} = \sup_{\qprob\in\mathcal{M}}\espalt{\qprob}{B}.
\end{equation}
In the current setup $\underline{b}$ and $\overline{b}$ can be explicitly identified, as the following lemma shows.
\begin{lemma}\label{lem: arb_free_range_no_n}
Let Assumptions \ref{ass: filt}, \ref{ass: asset_mkt}, \ref{ass: claim_decomp} and \ref{ass: abstract_int} hold. Then
\begin{equation}\label{eq: arb_free_no_n}
\underline{b} = d + \essinf{\prob}{Y};\qquad \bar{b} = d + \esssup{\prob}{Y},
\end{equation}
where $d$ is the initial capital required to replicate $D$. Notice that as $Y$ is not necessarily bounded, $\underline{b}$ and $\bar{b}$ need not be finite.
\end{lemma}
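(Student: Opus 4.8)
The plan is to first reduce the two extremal expectations to quantities that depend only on the law of $Y$ under $\prob$-equivalent changes of measure, and then to identify those with $\essinf{\prob}{Y}$ and $\esssup{\prob}{Y}$. The structural fact driving the reduction is that \emph{every} $\qprob\in\mathcal{M}$ restricts to $\qprob_0$ on $\G_T$. To see this I would use that $S$ is $\filtg$-adapted and locally bounded (Assumption \ref{ass: asset_mkt}): stopping $S$ along a sequence of $\filtg$-stopping times on which it is bounded turns it into a bounded $(\qprob,\filt)$-martingale, which, being $\filtg$-adapted, is also a $(\qprob,\filtg)$-martingale; hence the $(\qprob,\filt)$-local martingale property of $S$ descends to a $(\qprob,\filtg)$-local martingale property, so $\qprob|_{\G_T}$ is an equivalent local martingale measure for the $(\prob,\filtg;S)$-market. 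By completeness (Assumption \ref{ass: asset_mkt}) this measure is unique, so $\qprob|_{\G_T}=\qprob_0$. Since $D$ is $\G_T$-measurable with $\espalt{\qprob_0}{|D|}<\infty$ by Assumption \ref{ass: abstract_int}, this yields $\espalt{\qprob}{|D|}=\espalt{\qprob_0}{|D|}<\infty$ and $\espalt{\qprob}{D}=\espalt{\qprob_0}{D}=d$, the last equality because the replicating wealth process for $D$ is a $(\qprob_0,\filtg)$-martingale started from $d$. Therefore $\espalt{\qprob}{B}=d+\espalt{\qprob}{Y}$ for all $\qprob\in\mathcal{M}$, and the lemma reduces to showing $\inf_{\qprob\in\mathcal{M}}\espalt{\qprob}{Y}=\essinf{\prob}{Y}$ and $\sup_{\qprob\in\mathcal{M}}\espalt{\qprob}{Y}=\esssup{\prob}{Y}$.

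\textbf{The two inequalities.} One direction is immediate: since each $\qprob\in\mathcal{M}$ is equivalent to $\prob$, we have $\essinf{\prob}{Y}\le Y\le\esssup{\prob}{Y}$ $\qprob$-a.s., hence $\essinf{\prob}{Y}\le\espalt{\qprob}{Y}\le\esssup{\prob}{Y}$. For the reverse direction I would use the independence structure to build approximating measures: given any probability $\qprob_1\sim\prob$ on $\Hh_T$ with $Y\in L^1(\qprob_1)$, define $\qprob$ on $\F_T=\G_T\vee\Hh_T$ by $d\qprob/d\prob=(d\qprob_0/d\prob|_{\G_T})(d\qprob_1/d\prob|_{\Hh_T})$. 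Because $\G_T$ and $\Hh_T$ are $\prob$-independent (Assumption \ref{ass: filt}), this density is strictly positive with unit $\prob$-expectation, so $\qprob\sim\prob$; moreover $\qprob|_{\G_T}=\qprob_0$, $\qprob|_{\Hh_T}=\qprob_1$, and $\G_T,\Hh_T$ remain $\qprob$-independent. This last point is what lets one check that $S$, being $\filtg$-adapted and a $(\qprob_0,\filtg)$-local martingale, is still a $(\qprob,\filt)$-local martingale, so $\qprob\in\mathcal{M}$; and independence also gives $\espalt{\qprob}{Y}=\espalt{\qprob_1}{Y}$. Hence $\sup_{\qprob\in\mathcal{M}}\espalt{\qprob}{Y}\ge\sup\left\{\espalt{\qprob_1}{Y}:\qprob_1\sim\prob\text{ on }\Hh_T,\ Y\in L^1(\qprob_1)\right\}=\esssup{\prob}{Y}$, where the last equality is the standard fact that $\prob$-equivalent mass can be concentrated on $\{Y>c\}$ for any $c<\esssup{\prob}{Y}$ (using $Y\in L^1(\prob)$, which holds by Assumption \ref{ass: abstract_int}); the symmetric construction on $\{Y<c\}$ handles the infimum. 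Combining the two directions proves the lemma.

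\textbf{Main obstacle.} The only step needing genuine care is the verification that the product-type measure $\qprob$ keeps $S$ a $(\qprob,\filt)$-local martingale. This rests on the preservation of $\prob$-independence of $\G_T$ and $\Hh_T$ under $\qprob$ and on the fact that a $\filtg$-adapted, locally bounded $(\qprob_0,\filtg)$-local martingale stays a local martingale once the filtration is enlarged by a $\qprob$-independent component $\filth$ --- essentially an immersion ($\Hh$-hypothesis) statement that holds here precisely because of the product structure of $\qprob$ --- together with the dual assertion used in the reduction, that the $(\qprob,\filt)$-local martingale property of $S$ restricts back down to $\filtg$. Both facts are routine, proved by localizing with $\filtg$-stopping times along which $S$ is bounded and conditioning, but they are where the argument should be spelled out.
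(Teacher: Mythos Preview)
Your proposal is correct and follows essentially the same route as the paper: the paper likewise shows that every $\qprob\in\mathcal{M}$ agrees with $\qprob_0$ on $\G_T$ (its Lemma~\ref{lem: mart_meas_structure}), reduces to $d+\espalt{\qprob}{Y}$, and then attains the extremes by building product measures $d\qprob/d\prob=(d\qprob_0/d\prob|_{\G_T})\,R_T$ with $R_T$ a strictly positive $\Hh_T$-measurable density (its Lemma~\ref{lem: mart_meas_structure_2}), concentrating $R_T$ on sublevel/superlevel sets of $Y$. The only cosmetic difference is that the paper isolates your ``main obstacle'' as two standalone lemmas rather than sketching the immersion argument inline.
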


\subsection{Optimal Quantities}\label{SS:OptimalQuantities_no_n}

Let $\tilde{p}\in (\underline{b},\bar{b})$. Assume that at time $0$, the investor may make a one-time purchase of an arbitrary amount of $B$ for a unit price $\tilde{p}$. It is natural to ask what is the optimal number $\hat{q}$ to purchase.  Such a question has been studied in \cite{MR2212897} by solving the problem
\begin{equation*}
\sup_{q\in\R} u(-q\tilde{p},q) = \sup_{q\in\R} e^{a(q\tilde{p}-qp(q))}u(0,0),
\end{equation*}
where the last equality follows by \eqref{eq: indif_val_funct_no_n}, and taking an optimizer $\hat{q}$ if it exists. As $u(0,0) < 0$ we are interested in finding $\hat{q}\in\argmin_{q\in\R}\left(q\tilde{p} - qp(q)\right)$. Using \eqref{eq: indif_price_no_n}, this amounts to solving
\begin{equation}\label{eq: opt_q_var_prob_no_n}
\inf_{q\in\R} \left(q(\tilde{p} - d) + \frac{1}{a}\log\espalt{}{e^{-qa Y}}\right) = \inf_{q\in\R}\left(q(\tilde{p}-d) + \frac{1}{a}\Lambda(-qa)\right),
\end{equation}
and identifying an optimal $\hat{q}$. To this end, we have the following proposition
\begin{proposition}\label{prop: opt_qn_no_n}
Let Assumptions \ref{ass: filt}, \ref{ass: asset_mkt}, \ref{ass: claim_decomp} and \ref{ass: abstract_int} hold. Let $\tilde{p}\in (\underline{b},\bar{b})$ where $\underline{b},\bar{b}$ are as  in \eqref{eq: arb_free_no_n_abstract}. Then there exists a unique $\hat{q}\in\R$ solving \eqref{eq: opt_q_var_prob_no_n}. $\hat{q}$ is the unique real number which satisfies the first order conditions
\begin{equation}\label{eq: hat_qn_foc_no_n}
\tilde{p}-d = \dot{\Lambda}(-qa).
\end{equation}
where $\dot{\Lambda}(\cdot)$ is the derivative of $\Lambda(\cdot)$.
\end{proposition}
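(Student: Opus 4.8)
The plan is to analyze the function $g(q) \triangleq q(\tilde p - d) + \frac{1}{a}\Lambda(-qa)$ appearing in \eqref{eq: opt_q_var_prob_no_n} and show it is strictly convex, coercive, and has a derivative that hits zero exactly once. First I would record the basic properties of the cumulant generating function $\Lambda$ from \eqref{eq: h2_cgf}: under Assumption \ref{ass: abstract_int}, $\Lambda(\lambda) < \infty$ for all $\lambda \in \reals$, so by standard results (e.g. dominated convergence / differentiation under the integral sign, or \cite[Lemma 2.2.5]{MR1619036}) $\Lambda$ is $C^\infty$ on $\reals$, with $\dot\Lambda(\lambda) = \espalt{}{Ye^{\lambda Y}}/\espalt{}{e^{\lambda Y}}$ and $\ddot\Lambda(\lambda) = \varalt{\qprob_\lambda}{Y} \ge 0$ where $\qprob_\lambda$ is the tilted measure with density $e^{\lambda Y}/\espalt{}{e^{\lambda Y}}$. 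Moreover $\Lambda$ is strictly convex unless $Y$ is $\prob$-a.s. constant; the degenerate case $Y$ constant can be excluded because then $\underline b = \bar b$ and the hypothesis $\tilde p \in (\underline b, \bar b)$ is vacuous, so we may assume $\ddot\Lambda > 0$ everywhere. Consequently $g$ is strictly convex on $\reals$, so it has at most one minimizer, and \eqref{eq: hat_qn_foc_no_n} — which is exactly $g'(q) = 0$ after noting $g'(q) = (\tilde p - d) - \dot\Lambda(-qa)$ — has at most one solution.

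Next I would establish existence by showing $g$ is coercive, i.e. $g(q) \to +\infty$ as $|q| \to \infty$; equivalently, by strict convexity, that $\dot\Lambda(-qa)$ sweeps out an open interval containing $\tilde p - d$ as $q$ ranges over $\reals$. The key identities are
\[
\lim_{\lambda \to +\infty} \dot\Lambda(\lambda) = \esssup{\prob}{Y}, \qquad \lim_{\lambda \to -\infty} \dot\Lambda(\lambda) = \essinf{\prob}{Y},
\]
which hold because the tilted measures $\qprob_\lambda$ concentrate, as $\lambda \to \pm\infty$, near the essential supremum (resp. infimum) of $Y$ — this is a routine consequence of $\Lambda(\lambda) < \infty$ for all $\lambda$ and the monotone/dominated convergence theorems applied to $\espalt{}{Y e^{\lambda Y}}/\espalt{}{e^{\lambda Y}}$ (and remains valid, read as $+\infty$ or $-\infty$, when $Y$ is unbounded above or below). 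Since $\lambda \mapsto \dot\Lambda(\lambda)$ is continuous and strictly increasing, its range is precisely the open interval $\left(\essinf{\prob}{Y}, \esssup{\prob}{Y}\right)$. Substituting $\lambda = -qa$ with $a > 0$, as $q$ ranges over $\reals$ the quantity $\dot\Lambda(-qa)$ ranges over that same open interval. By Lemma \ref{lem: arb_free_range_no_n}, $\underline b = d + \essinf{\prob}{Y}$ and $\bar b = d + \esssup{\prob}{Y}$, so the hypothesis $\tilde p \in (\underline b, \bar b)$ says exactly that $\tilde p - d \in \left(\essinf{\prob}{Y}, \esssup{\prob}{Y}\right)$. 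Hence there is a (unique, by strict monotonicity) $\hat q \in \reals$ with $\dot\Lambda(-\hat q a) = \tilde p - d$, which is \eqref{eq: hat_qn_foc_no_n}.

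Finally I would close the loop: this $\hat q$ is a critical point of the strictly convex function $g$, hence its unique global minimizer, hence the unique solution of \eqref{eq: opt_q_var_prob_no_n}, and the infimum is finite (the value $g(\hat q)$ is attained). I expect the main obstacle — really the only point needing care rather than bookkeeping — to be the unbounded-$Y$ cases in the limit identities for $\dot\Lambda(\pm\infty)$: when $\esssup{\prob}{Y} = +\infty$ one must argue that $\dot\Lambda(\lambda) \to +\infty$, and when $\essinf{\prob}{Y} = -\infty$ that $\dot\Lambda(\lambda)\to -\infty$, both using only the hypothesis that $\Lambda$ is finite on all of $\reals$ (so that exponential moments of every order exist); a clean way is to note that for any $M$, on $\{Y > M\}$ the tilted mass $\qprob_\lambda(Y > M) \to 1$ as $\lambda \to \infty$, forcing $\dot\Lambda(\lambda) = \espalt{\qprob_\lambda}{Y} \ge M \qprob_\lambda(Y>M) + (\text{lower-order terms controlled by a second exponential moment}) \to$ something $\ge M$ eventually, and let $M \to \infty$. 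Everything else — smoothness and convexity of $\Lambda$, the translation between $g'(q)=0$ and \eqref{eq: hat_qn_foc_no_n}, and the identification of the endpoints via Lemma \ref{lem: arb_free_range_no_n} — is standard.
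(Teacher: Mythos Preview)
Your proposal is correct and follows essentially the same route as the paper: strict convexity of $\Lambda$ gives strict convexity of the objective, the identification $\underline{b}-d=\essinf{\prob}{Y}$, $\bar{b}-d=\esssup{\prob}{Y}$ from Lemma \ref{lem: arb_free_range_no_n} places $\tilde{p}-d$ in the right range, and the asymptotics of $\Lambda$ at $\pm\infty$ yield existence of a unique minimizer satisfying the first-order condition. The only cosmetic difference is that the paper argues coercivity via $\lim_{\lambda\to\pm\infty}\Lambda(\lambda)/\lambda=\esssup{\prob}{Y},\essinf{\prob}{Y}$ (recorded in a supporting lemma) and then deduces the first-order condition at the minimizer, whereas you compute $\lim_{\lambda\to\pm\infty}\dot\Lambda(\lambda)$ directly and solve the first-order equation first; these are equivalent by monotonicity of $\dot\Lambda$.
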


\section{A ``Large Market'' Example}\label{S:LM_Ex}

We now present in detail an important example of a semi-complete market.  In fact, the example below constructs a sequence of semi-complete markets and motivates the desire to study semi-complete markets with asymptotically vanishing hedging errors. Proofs of all the statements within this Section are given in Section \ref{S: LM_Ex_Proof} in the Appendix.

Fix an integer $n$. The large market example refers to a market in which  a sequence of risky assets is in theory available to trade, but  in practice it is only feasible to trade in the first $n$ assets. Contingent claims, however, are dependent upon all the sources of uncertainty and hence for each $n$ the market is incomplete. The semi-complete structure arises when one is able to completely hedge away the portion of the claim depending upon the first $n$ assets or sources of uncertainty, but is unable to hedge the remaining tail portion. As mentioned in the introduction, these types of models typically appear in the insurance industry, see \cite{BEM_2012,MR2178505,Brennan_Schwartz_79, Milevsky2001299,MR2187311}.

The probability space $\probspace$ is fixed and assumed rich enough to support a sequence $\{W^{j}\}_{j\in\mathbb{N}}$ of independent Brownian motions. Denote by $\filt$ the right-continuous, $\prob$-augmented enlargement of the filtration generated by the $\cbra{W^j}_{j\in\N}$ on $[0,T]$. For a given $\mu = \{\mu_i\}_{i\in\mathbb{N}}$ and $\Sigma = \{\Sigma_{ij}\}_{i,j\in\mathbb{N}}$ assume
\begin{assumption}\label{ass: mu_sig}
$\sum_{i=1}^\infty \mu_i^2 < \infty$.  $\Sigma$ is symmetric in that $\Sigma_{ij} = \Sigma_{ji}$ for $i,j\in\mathbb{N}$, and uniformly elliptic in that there exists a $\underline{\lambda}>0$ such that for all $\xi = \{\xi_i\}_{i\in\mathbb{N}}$ with $|\xi|^2 = \sum_{i=1}^\infty \xi_i^2 < \infty$, one has $\xi'\Sigma\xi = \sum_{i,j=1}^\infty \xi_i \Sigma_{ij}\xi_j \geq \underline{\lambda} |\xi|^2$.
\end{assumption}
Set $\sigma$ as the unique \emph{lower triangular} matrix such that $\sigma\sigma' = \Sigma$. Such a $\sigma$ may be obtained using the recursive formula in the Cholesky factorization (\cite[Chapter 6.6]{burden1997numerical}). The risky assets $\cbra{S^i}_{i\in\mathbb{N}}$ evolve according to
\begin{equation}
\frac{dS^{i}_{t}}{S^{i}_{t}}=\mu_{i}dt+\sum_{j=1}^i \sigma_{ij}dW^{j}_{t};\qquad i\in\N,\label{Eq:TradedAssets}
\end{equation}
so that, with an abuse of notation $dS_t/S_t = \mu dt + \sigma dW_t$. It then follows for all $i,j$ that $S^i$ has instantaneous rate of return equal to $\mu^i$ and $S^i,S^j$ have instantaneous return covariances of $\Sigma_{ij}$.

The $\F_T$-measurable non-traded asset $B$ takes the form
\begin{equation}\label{eq: h_form0}
B=\sum_{i=1}^{\infty}B_{i},
\end{equation}
where $B_{i}$ is a random variable measurable with respect to the $\sigma-$ algebra generated by $W^{i}_T$, and hence the $\cbra{B_i}_{i\in\N}$ are independent under $\prob$. For $i\in\N$  define $\Gamma_i$ as the cumulant generating function of $B_i$:
\begin{equation}\label{eq: nth_cgf}
\Gamma_i(\lambda) = \log\left(\espalt{}{e^{\lambda B_{i}}}\right);\qquad \lambda\in\R.
\end{equation}
In order to make $B$ well defined, as well as to verify the assumptions of Section \ref{S:SC}, we assume
\begin{assumption}\label{ass: h_n_mart}
For $i\in\N$ and all $\lambda\in\R$, $\Gamma_i(\lambda) < \infty$.
\end{assumption}

\begin{assumption}\label{ass: h_cgf}
For all $\lambda\in\R$, the limit
\begin{equation}\label{eq: finite_cgf}
\sum_{i=1}^\infty \Gamma_i(\lambda) = \lim_{N\uparrow\infty} \sum_{i=1}^N \Gamma_i(\lambda),
\end{equation}
exists and is finite in magnitude.
\end{assumption}

Notice that we do not assume that $\sum_{i=1}^\infty|\Gamma_{i}(\lambda)|<\infty$; in particular, $\lim_{N\uparrow\infty} \sum_{i=1}^N \Gamma_i(\lambda)$  may depend on the order of summation. Assumption \ref{ass: h_n_mart} implies that $\espalt{}{B_{i}^{2}} < \infty$ for all $i$ and hence $\dot{\Gamma}_i(0) = \espalt{}{B_{i}}$. Additionally, Assumptions \ref{ass: h_n_mart} and \ref{ass: h_cgf} imply that the claim $B$ in \eqref{eq: h_form0} is well defined, as the following lemma shows.

\begin{lemma}\label{lem: h_well_posed}
Let Assumptions \ref{ass: h_n_mart} and \ref{ass: h_cgf} hold.  Then $\sum_{i=1}^N B_i$ converges $\prob$-almost surely and in $L^2(\prob)$ to a random variable $B$. In particular, the limits $\lim_{N\uparrow\infty}\sum_{i=1}^{N}\espalt{}{B_{i}}$ and $\lim_{N\uparrow\infty}\sum_{i=1}^{N}\var{B_{i}}$ exist and are finite.
\end{lemma}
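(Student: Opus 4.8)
The plan is to recognize the partial sums $S_N = \sum_{i=1}^N B_i$ as a martingale (with respect to a suitable filtration indexed by $N$) of independent mean-adjusted increments, and then to invoke a classical two-series / martingale convergence argument. The key observation is that Assumption \ref{ass: h_n_mart} gives each $B_i$ finite exponential moments, hence finite first and second moments, so $\dot\Gamma_i(0) = \espalt{}{B_i}$ and $\ddot\Gamma_i(0) = \var{B_i} + (\espalt{}{B_i})^2 < \infty$. The real content is to show that both $\sum_i \espalt{}{B_i}$ and $\sum_i \var{B_i}$ converge (not merely in the Cesàro/ordered sense but absolutely, in the case of the variances, since they are nonnegative), for then Kolmogorov's three-series theorem (or the $L^2$ martingale convergence theorem applied to $\sum_i (B_i - \espalt{}{B_i})$, together with convergence of the deterministic series $\sum_i \espalt{}{B_i}$) yields both $\prob$-a.s. and $L^2(\prob)$ convergence of $S_N$.

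First I would extract the second-order information from Assumption \ref{ass: h_cgf}. Since $\Gamma_i$ is convex on all of $\reals$ with $\Gamma_i(0)=0$, I would use a quantitative lower bound of the form $\Gamma_i(\lambda) + \Gamma_i(-\lambda) \geq c(\lambda)\,\var{B_i}$ valid for $\lambda$ in a fixed bounded interval, with $c(\lambda)>0$; this follows from Taylor expansion of $\lambda \mapsto \Gamma_i(\lambda)$ around $0$ together with a uniform control on the third derivative. Concretely, fixing some small $\lambda_0 > 0$, one has $\Gamma_i(\lambda_0) + \Gamma_i(-\lambda_0) = \lambda_0^2 \ddot\Gamma_i(0) + O(\lambda_0^3 \cdot (\textrm{moment bounds}))$, but to avoid needing uniform third-moment control I would instead argue directly: because $\Gamma_i$ is convex, $\Gamma_i(\lambda_0) + \Gamma_i(-\lambda_0) \geq 2\Gamma_i(0) = 0$ always, and a second-order expansion with the exact Lagrange remainder shows $\Gamma_i(\lambda_0)+\Gamma_i(-\lambda_0) \geq \tfrac{1}{2}\lambda_0^2 \var{B_i}$ once $\lambda_0$ is small enough that the remainder is dominated — here one uses that the existence of exponential moments of all orders makes $\ddot\Gamma_i$ continuous near $0$. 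Summing over $i$ and using Assumption \ref{ass: h_cgf} (applied at $\lambda_0$ and at $-\lambda_0$, both of which give finite limits) bounds $\sum_{i=1}^N \var{B_i}$ uniformly in $N$; monotonicity in $N$ then gives convergence of $\sum_i \var{B_i}$.

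Next I would deduce convergence of $\sum_i \espalt{}{B_i}$. Once $\sum_i \var{B_i} < \infty$ is established, the centered sum $M_N = \sum_{i=1}^N (B_i - \espalt{}{B_i})$ is an $L^2$-bounded martingale (the $B_i$ being independent), hence converges $\prob$-a.s.\ and in $L^2(\prob)$ to some $M_\infty$. It remains to show the deterministic series $\sum_i \espalt{}{B_i}$ converges; this I would get from Assumption \ref{ass: h_cgf} at a single small $\lambda_0$ by writing $\sum_{i=1}^N \Gamma_i(\lambda_0) = \lambda_0 \sum_{i=1}^N \espalt{}{B_i} + \tfrac{1}{2}\lambda_0^2 \sum_{i=1}^N \ddot\Gamma_i(\theta_i)$ via Taylor, controlling the second sum by the already-established variance bound plus a uniform bound on $\ddot\Gamma_i$ on $[-\lambda_0,\lambda_0]$ coming from finiteness of $\sum_i \Gamma_i(\pm\lambda_0)$, and then solving for $\sum_{i=1}^N \espalt{}{B_i}$; convergence of the left side and of the remainder sum forces convergence of $\sum_{i=1}^N \espalt{}{B_i}$. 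Adding the two pieces, $S_N = M_N + \sum_{i=1}^N \espalt{}{B_i} \to M_\infty + \sum_i \espalt{}{B_i} =: B$ both $\prob$-a.s.\ and in $L^2(\prob)$, which is the assertion; the stated limits $\lim_N \sum_{i=1}^N \espalt{}{B_i}$ and $\lim_N \sum_{i=1}^N \var{B_i}$ exist and are finite by construction.

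The main obstacle I anticipate is the one highlighted by the authors' own remark: Assumption \ref{ass: h_cgf} only asserts that $\sum_i \Gamma_i(\lambda)$ converges in the given order, \emph{not} absolutely, so one cannot freely rearrange or pass to tail estimates term-by-term using $|\Gamma_i(\lambda)|$. The way around this is exactly the convexity trick above: $\Gamma_i(\lambda) + \Gamma_i(-\lambda) \geq 0$ termwise, so the \emph{symmetrized} series has nonnegative terms and its ordered convergence is genuine (absolute) convergence, which is what unlocks the uniform variance bound; and the variance bound in turn tames the remainder in the Taylor expansion used to recover $\sum_i \espalt{}{B_i}$. Getting clean, $i$-uniform constants in these second-order estimates — in particular a uniform bound on $\sup_i \sup_{|\theta|\le\lambda_0}\ddot\Gamma_i(\theta)$ — from finiteness of $\sum_i\Gamma_i(\pm\lambda_0)$ and $\sum_i \Gamma_i(\pm 2\lambda_0)$ is the one place where some care with Cauchy estimates for the analytic functions $\Gamma_i$ is needed, but it is routine.
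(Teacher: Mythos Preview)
Your overall strategy --- prove that $\sum_i\espalt{}{B_i}$ and $\sum_i\var{B_i}$ both converge, then invoke $L^2$-martingale convergence for the centered series --- is exactly the paper's. The gap is in how you extract these two series from Assumption~\ref{ass: h_cgf}. Your termwise Taylor argument for the variance bound requires $\ddot\Gamma_i(\xi_+)+\ddot\Gamma_i(\xi_-)\geq\ddot\Gamma_i(0)$ for some $\xi_\pm\in(-\lambda_0,\lambda_0)$, but $\ddot\Gamma_i(t)$ is the variance of $B_i$ under the exponential tilt by $t$ and can fall strictly below $\ddot\Gamma_i(0)$; how small $\lambda_0$ must be to rescue the inequality depends on $i$. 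The ``Cauchy estimates'' you invoke do not obviously close this: each $\Gamma_i$ does extend analytically off the real axis, but Assumption~\ref{ass: h_cgf} gives no uniform-in-$i$ control of $|\Gamma_i|$ on complex circles (nothing prevents zeros of the moment generating function from approaching the real axis as $i$ grows), so a uniform bound on $\sup_{|\theta|\le\lambda_0}\ddot\Gamma_i(\theta)$ is not routine. The same uniformity problem reappears in your Taylor-based recovery of $\sum_i\espalt{}{B_i}$.

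The paper avoids all of this by reversing your order and never expanding term by term. It first shows $\sum_i\espalt{}{B_i}$ is Cauchy via the elementary convexity sandwich $-\Gamma_i(-\lambda)/\lambda\le\dot\Gamma_i(0)=\espalt{}{B_i}\le\Gamma_i(\lambda)/\lambda$, summed over $N<i\le M$; only first-order convexity is used and no uniformity is needed. With the means in hand, it applies the pointwise inequality $x^2\le(2/\lambda^2)(e^{\lambda x}+e^{-\lambda x})$ directly to the \emph{entire} centered partial sum $x=\sum_{i\le N}(B_i-\espalt{}{B_i})$ and takes expectations; independence turns the right side into $(2/\lambda^2)\exp\bigl(\sum_{i\le N}\Gamma_i(\pm\lambda)\mp\lambda\sum_{i\le N}\espalt{}{B_i}\bigr)$, which is bounded in $N$ by Assumption~\ref{ass: h_cgf} together with the first step. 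Working with the aggregate sum rather than individual $\Gamma_i$ is precisely what eliminates any need for $i$-uniform derivative control.
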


\begin{remark}
The form for $B$ in \eqref{eq: h_form0} also allows for  $B = \sum_{i=1}^\infty \zeta_i \tilde{B}_i$ where $\tilde{B}_i$ is $\sigma(W^i_T)$ measurable and $\cbra{\zeta_i}_{i\in\mathbb{N}}$ is a sequence of normalizing constants.  This form encompasses the case when $B$ is a suitably weighted sum of component claims, or an aggregated claim, see  for example \cite{BEM_2012,MR2178505,Robertson_2012}.
\end{remark}

Define the market price of risk vector $\theta$ by
\begin{equation}\label{eq: theta_def}
\theta = \sigma^{-1}\mu.
\end{equation}

Observe that $\Sigma_{ii}\geq \underline{\lambda}$ and $\sigma_{ii} = \sqrt{\Sigma_{ii}}$, so $\sigma$ is invertible. Note that as $\sigma$ is lower triangular, $\sigma^{-1}$ is also lower triangular with $\sigma^{-1}_{ii} = 1/\sigma_{ii}$. Furthermore, Assumption \ref{ass: mu_sig} implies that $\theta$ may be defined iteratively by $\theta_1 = \mu_1/\sigma_{11}$ and $\theta_i = (1/\sigma_{ii})\left(\mu_i - \sum_{j=1}^{i-1}\sigma_{ij}\theta_j\right)$ for $i\geq 2$. Indeed, a lengthy induction argument shows that $\theta$ thus defined satisfies $\theta = \sigma^{-1}\mu$. Furthermore, Assumption \ref{ass: mu_sig} implies $\sum_{i=1}^\infty \theta_i^2 = \theta'\theta = \mu'\Sigma^{-1}\mu \leq (1/\underline{\lambda})\mu'\mu < \infty$, and hence one may define the measure $\tilde{\qprob} \sim \prob$ on $\F_T$ by
\begin{equation}\label{eq: n_inf_rn}
\frac{d\tilde{\qprob}}{d\prob} = \mathcal{E}\left(\sum_{i=1}^\infty -\theta_i W^i_\cdot\right)_T,
\end{equation}
where $\mathcal{E}(\cdot)$ is the stochastic exponential. With all the notation in place, the market thus described is semi-complete and satisfies the Assumptions of Section \ref{S:SC} as the following lemma shows

\begin{lem}\label{lem: LM_SC}
Let Assumptions \ref{ass: mu_sig}, \ref{ass: h_n_mart} and \ref{ass: h_cgf} hold. Then, for each $n$, with $S = (S^1,...,S^n)$ denoting the tradable assets, $D = \sum_{i=1}^n B_i$, $Y = \sum_{i=n+1}^\infty B_i$ denoting the claim decomposition, $\filtg$ denoting the (right-continuous, $\prob$-augmented) filtration generated by $W^1,...,W^n$ and $\filth$ denoting the (right-continuous $\prob$-augmented) filtration generated by $W^{n+1},W^{n+2},...$ it follows that Assumptions \ref{ass: filt}, \ref{ass: asset_mkt}, \ref{ass: claim_decomp} and \ref{ass: abstract_int} hold. Therefore, with
\begin{equation}\label{eq: x1_def_lm}
d^n = \sum_{i=1}^n \espalt{\tilde{\qprob}}{B_i},
\end{equation}
the indifference price $p^n(q)$ for $q$ units of $B$ satisfies
\begin{equation}\label{eq: indiff_px_n}
p^n(q) = d^n - \frac{1}{qa}\sum_{i=n+1}^\infty \Gamma_i(-qa);\qquad q\in\reals.
\end{equation}
The range of arbitrage free prices from Lemma \ref{lem: arb_free_range_no_n} takes the form $(\underline{b}_n,\bar{b}_n)$ with
\begin{equation}\label{eq: arb_free_range_n}
\begin{split}
\underline{b}_n &= d^n + \sum_{i=n+1}^\infty \essinf{\prob}{B_i};\qquad \bar{b}_n = d^n + \sum_{i=n+1}^\infty \esssup{\prob}{B_i}.
\end{split}
\end{equation}
Lastly, for any $\tilde{p}^n\in (\underline{b}_n,\bar{b}_n)$ the optimal quantity $\hat{q}_n$ from Proposition \ref{prop: opt_qn_no_n} satisfies the first order conditions
\begin{equation}\label{eq: hat_qn_foc}
\tilde{p}^n - d^n = \sum_{i=n+1}^\infty \dot{\Gamma}_i(-\hat{q}_n a).
\end{equation}
\end{lem}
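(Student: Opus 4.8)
The plan is, for each fixed $n$, to verify that the data $S=(S^1,\dots,S^n)$, $D=\sum_{i=1}^n B_i$, $Y=\sum_{i=n+1}^\infty B_i$, $\filtg$ and $\filth$ of the statement satisfy Assumptions \ref{ass: filt}, \ref{ass: asset_mkt}, \ref{ass: claim_decomp} and \ref{ass: abstract_int}, and then to deduce \eqref{eq: indiff_px_n}, \eqref{eq: arb_free_range_n} and \eqref{eq: hat_qn_foc} from Propositions \ref{prop: opt_invest_no_n} and \ref{prop: opt_qn_no_n} and Lemma \ref{lem: arb_free_range_no_n} after identifying the abstract objects $d$ and $\Lambda$ with their concrete counterparts. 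Assumption \ref{ass: filt} is immediate from the independence of $\{W^j\}_{j\in\N}$: $\G_T=\sigma(W^1_s,\dots,W^n_s;s\le T)$ and $\Hh_T=\sigma(W^{n+1}_s,W^{n+2}_s,\dots;s\le T)$ are $\prob$-independent, both $\filtg$ and $\filth$ are right-continuous and $\prob$-augmented, and $\filt=\filtg\vee\filth$. For Assumption \ref{ass: asset_mkt}, $(S^1,\dots,S^n)$ is a multidimensional Black--Scholes model driven by $W^1,\dots,W^n$ whose volatility is the top-left $n\times n$ block $\sigma^{(n)}$ of $\sigma$ --- lower triangular with strictly positive diagonal, hence invertible --- and whose market price of risk is the constant vector $\theta^{(n)}=(\theta_1,\dots,\theta_n)$. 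Restricting $\tilde\qprob$ of \eqref{eq: n_inf_rn} to $\G_T$ and using independence produces the measure $\qprob_0$ with $d\qprob_0/d\prob|_{\G_T}=\mathcal{E}(-\sum_{i=1}^n\theta_i W^i)_T$, which is a genuine $(\prob,\filtg)$-martingale (Novikov, constant integrand); by Girsanov and the identity $(\sigma^{(n)}\theta^{(n)})_i=\mu_i$ for $i\le n$, each $S^i$ is a $(\qprob_0,\filtg)$-local martingale, and $\relent{\qprob_0}{\prob|_{\G_T}}=\tfrac12 T\sum_{i=1}^n\theta_i^2<\infty$. Completeness of the $(\prob,\filtg;S)$-market, uniqueness of $\qprob_0$, and replicability of every $\G_T$-measurable $\qprob_0$-integrable claim then follow from the martingale representation theorem for $\filtg$ together with the invertibility of $\sigma^{(n)}$, exactly as for the standard complete Black--Scholes market.

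For Assumption \ref{ass: claim_decomp}, $D$ is $\G_T$-measurable by construction, each $B_i$ with $i>n$ is $\sigma(W^i_T)\subset\Hh_T$ measurable, and Assumptions \ref{ass: h_n_mart}--\ref{ass: h_cgf} persist for the tail family $\{B_i\}_{i>n}$ (removing the finitely many finite quantities $\Gamma_1(\lambda),\dots,\Gamma_n(\lambda)$ does not affect convergence of $\sum_i\Gamma_i(\lambda)$), so Lemma \ref{lem: h_well_posed} shows $Y$ is a well-defined, square-integrable, $\Hh_T$-measurable random variable, independent of $S$ because $\G_T\perp\Hh_T$. For Assumption \ref{ass: abstract_int}, $\espalt{\qprob_0}{|D|^{1+\eps}}<\infty$ follows from H\"older's inequality, since $D$ is a finite sum of random variables each having all exponential (hence all polynomial) moments under $\prob$ by Assumption \ref{ass: h_n_mart}, while $d\qprob_0/d\prob|_{\G_T}\in L^p(\prob)$ for every $p$. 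The finiteness of $\Lambda$ together with the crucial identification
\begin{equation*}
\Lambda(\lambda)=\log\espalt{}{e^{\lambda Y}}=\sum_{i=n+1}^\infty\Gamma_i(\lambda),\qquad\lambda\in\R,
\end{equation*}
which drives all three formulas, follows from independence: $\espalt{}{e^{\lambda\sum_{i=n+1}^N B_i}}=\exp\!\big(\sum_{i=n+1}^N\Gamma_i(\lambda)\big)$, and because Assumption \ref{ass: h_cgf} forces the partial sums $\sum_{i=n+1}^N\Gamma_i(2\lambda)$ to converge and hence to be bounded in $N$, the family $\{e^{\lambda\sum_{i=n+1}^N B_i}\}_N$ is uniformly $\prob$-integrable; passing to the limit along the $\prob$-a.s.\ convergence $\sum_{i=n+1}^N B_i\to Y$ yields the identity and its finiteness.

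With the four assumptions in force, the three conclusions follow by specialization. The replication cost of $D$ equals $d=\espalt{\qprob_0}{D}=\sum_{i=1}^n\espalt{\qprob_0}{B_i}=\sum_{i=1}^n\espalt{\tilde\qprob}{B_i}=d^n$, because each $B_i$ with $i\le n$ is $\G_T$-measurable and $\qprob_0$ agrees with $\tilde\qprob$ on $\G_T$; substituting $d=d^n$ and $\Lambda(-qa)=\sum_{i=n+1}^\infty\Gamma_i(-qa)$ into \eqref{eq: indif_price_no_n} gives \eqref{eq: indiff_px_n}. For \eqref{eq: arb_free_range_n}, Lemma \ref{lem: arb_free_range_no_n} reduces matters to the identities $\essinf{\prob}{Y}=\sum_{i=n+1}^\infty\essinf{\prob}{B_i}$ and $\esssup{\prob}{Y}=\sum_{i=n+1}^\infty\esssup{\prob}{B_i}$; these I would establish by combining the elementary additivity of $\essinf$, resp.\ $\esssup$, over finitely many independent summands with a limiting argument for the tail $\sum_{i>N}B_i$ that is controlled through the exponential-moment bounds supplied by Assumption \ref{ass: h_cgf} (and $\espalt{}{\sum_{i>N}B_i}\to 0$ from Lemma \ref{lem: h_well_posed}), the case in which some $\essinf{\prob}{B_i}$ or $\esssup{\prob}{B_i}$ is infinite being handled directly since then both sides of the relevant identity are infinite. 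Finally, for \eqref{eq: hat_qn_foc}, since $\Lambda=\sum_{i>n}\Gamma_i$ is a pointwise limit on $\R$ of finite sums of the convex functions $\Gamma_i$ and $\Lambda$ is differentiable, term-by-term differentiation is legitimate, so $\dot\Lambda(\lambda)=\sum_{i=n+1}^\infty\dot\Gamma_i(\lambda)$, and Proposition \ref{prop: opt_qn_no_n} with $\tilde p=\tilde p^n$, $d=d^n$ yields \eqref{eq: hat_qn_foc}. I expect the main obstacle to be precisely the package of infinite-series manipulations --- the identity $\Lambda=\sum_{i>n}\Gamma_i$, the additivity of $\essinf/\esssup$ across the full (infinite) sum, and the term-by-term differentiation --- each of which must exploit the merely conditional convergence granted by Assumption \ref{ass: h_cgf} rather than absolute convergence; by contrast, the verification of the Black--Scholes-type Assumption \ref{ass: asset_mkt} is standard.
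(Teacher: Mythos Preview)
Your proposal is correct and follows essentially the same route as the paper: verify Assumptions \ref{ass: filt}--\ref{ass: abstract_int} for the Black--Scholes data and then specialize Propositions \ref{prop: opt_invest_no_n}, \ref{prop: opt_qn_no_n} and Lemma \ref{lem: arb_free_range_no_n} via the identifications $d=d^n$ and $\Lambda(\lambda)=\sum_{i>n}\Gamma_i(\lambda)$. You even flag the two genuine technical points --- additivity of $\essinf{\prob}{\cdot}/\esssup{\prob}{\cdot}$ over the infinite independent sum, and term-by-term differentiation of $\sum_{i>n}\Gamma_i$ --- exactly where the paper does.

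For those two obstacles the paper's arguments are a bit cleaner than your sketches, and worth knowing. For the $\essinf{\prob}{\cdot}$ identity the paper reduces (w.l.o.g.) to $\espalt{}{B_i}=0$, observes that $M_m=\sum_{i=n+1}^{n+m}B_i$ is an $L^2$-bounded martingale with $M_m=\condespalt{}{M_\infty}{\F_m}$, and reads off $\sum_{i=n+1}^{n+m}\essinf{\prob}{B_i}=\essinf{\prob}{M_m}\ge\essinf{\prob}{M_\infty}$; letting $m\uparrow\infty$ gives the nontrivial inequality in one line, bypassing any tail-control estimate. For the differentiation, the paper (Lemma \ref{lem: Xi_prop}) uses the convexity sandwich $\Gamma_i(\lambda)-\Gamma_i(\lambda-1)\le\dot\Gamma_i(\lambda)\le\Gamma_i(\lambda+1)-\Gamma_i(\lambda)$ so that Assumption \ref{ass: h_cgf} makes $\sum\dot\Gamma_i(\lambda)$ Cauchy directly; your appeal to the general fact that pointwise convergence of convex functions to a differentiable limit forces convergence of derivatives is also valid, and amounts to the same sandwich after unwinding.
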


Consider $d^n$ from \eqref{eq: x1_def_lm} and define
\begin{equation}\label{eq: x1_def}
d = \lim_{n\uparrow\infty} d^n.
\end{equation}
That the limit exists follows from Lemma \ref{lem: h_well_posed} which proves $\lim_{N\uparrow\infty} \sum_{i=1}^N \espalt{}{B_i}$ and  $\lim_{N\uparrow\infty} \sum_{i=1}^N \varalt{}{B_i}$ both exist and are finite. Thus, for any positive integers $n\leq N$
\begin{equation*}
d^N- d^n = \sum_{i=n+1}^N \espalt{\tilde{\qprob}}{B_i} = \sum_{i=n+1}^N \espalt{}{B_i} + \sum_{i=n+1}^N\left(\espalt{\tilde{\qprob}}{B_i} - \espalt{}{B_i}\right),
\end{equation*}
and
\begin{equation*}
\begin{split}
\left|\sum_{i=n+1}^N (\espalt{\tilde{\qprob}}{B_i}-\espalt{}{B_i}) \right|^2 = \left|\espalt{}{\frac{d\tilde{\qprob}}{d\prob}\left(\sum_{i=n+1}^N (B_i-\espalt{}{B_i})\right)}\right|^2 &\leq e^{T\sum_{i=1}^\infty \theta_i^2}\sum_{i=n+1}^N \varalt{}{B_i},
\end{split}
\end{equation*}
where the inequality follows by H\"{o}lder's inequality, $\espalt{}{(d\tilde{\qprob}/d\prob)^{2}}=e^{T\theta'\theta}$ and the independence of $\cbra{B_{i}}_{i\in\mathbb{N}}$.
Therefore, the replicating initial capitals $d^n$ are converging to a unique value $d$.  From Lemma \ref{lem: h_well_posed}, $B\in L^{2}(\prob)$ and $d = \espalt{\tilde{\qprob}}{B}$ is the unique arbitrage free price for $B$ in the $n=\infty$ model where one may trade in all the underlying assets $\cbra{S^i}_{i\in\N}$.

To conclude this section, examples are given where the optimal purchase quantities can be explicitly identified. The purpose of these examples is to highlight how optimal positions may become large as $n\uparrow\infty$.

\begin{example}\label{ex: gaussian}
Let $B_{i}\stackrel{\prob}{\sim} N\left(\gamma_{i},\delta_{i}^{2}\right)$ be normally distributed for each $i$.  Assumption \ref{ass: h_n_mart} is always satisfied, and Assumption \ref{ass: h_cgf} follows if $\sum_{i=1}^\infty|\gamma_i|<\infty$, $\sum_{i=1}^\infty \delta_{i}^{2} < \infty$. Moreover, we have $\underline{b}_n = -\infty$ and $\bar{b}_n = \infty$ for each $n$. Calculation using \eqref{eq: nth_cgf} shows $\Gamma_i(\lambda) = (1/2)\lambda^2\delta_{i}^{2} + \lambda\gamma_{i}$. Thus
\begin{equation}\label{eq: gamma_i_sum_val}
\sum_{i=n+1}^\infty \dot{\Gamma}_i(-qa) = -qa\sum_{i=n+1}^\infty\delta_{i}^{2} + \sum_{i=n+1}^\infty \gamma_i.
\end{equation}
Therefore, for any $\tilde{p}^n\in\R$,  $\hat{q}_n$ from \eqref{eq: hat_qn_foc} takes the form
\begin{equation}\label{eq: gaussian_opt_quant}
\hat{q}_n = \frac{d^n-\tilde{p}^n + \sum_{i=n+1}^\infty \gamma_i}{a\sum_{i=n+1}^\infty \delta_{n}^{2}}.
\end{equation}
 Thus, if $\liminf_{n\uparrow\infty}|\tilde{p}^n-d| > 0$ then $|\hat{q}_n|\rightarrow \infty$ at a rate proportional to $\left(\sum_{i=n+1}^\infty \delta_{i}^{2}\right)^{-1}$.

\end{example}

\begin{example}\label{ex: poisson}
Let $B_{i}\stackrel{\prob}{\sim} \textrm{Poi}(\beta_i)$ be Poisson for each $i$. Assumption \ref{ass: h_n_mart} is clearly satisfied, and Assumption \ref{ass: h_cgf} follows if $\sum_{i=1}^\infty \beta_i < \infty$.  Here, $\underline{b}_n = d^n$ and $\bar{b}_n = \infty$ for each $n$. Calculation shows $\Gamma_i(\lambda) = (e^{\lambda}-1)\beta_i$. Thus
\begin{equation}\label{eq: gamma_i_sum_val_p}
\sum_{i=n+1}^\infty \dot{\Gamma}_i(-qa) = e^{-qa}\sum_{i=n+1}^\infty \beta_i.
\end{equation}
As $d^n\uparrow d$, in order for $\tilde{p}^n > \underline{b}_n$ for all $n$, we take $d\leq \tilde{p}^n < \infty$.
Thus, for any $\tilde{p}^n \geq d > d^n $,  $\hat{q}_n$ from \eqref{eq: hat_qn_foc} must satisfy $\hat{q}_n = - (1/a)\log\left((\tilde{p}^n-d^n)/(\sum_{i=n+1}^\infty \beta_i)\right).$ Here, if $\liminf_{n\uparrow\infty}(\tilde{p}^n-d) > 0$ then $\hat{q}_n\rightarrow -\infty$ at a rate proportional to $-\log\left(\sum_{i=n+1}^\infty \beta_i\right)$.

\end{example}

\section{Limiting Indifference Prices, Optimal Quantities and Large Deviations}\label{S:Limit_LDP_no_n}

We now embed the semi-complete market of Section \ref{S:SC} into a sequence of semi-complete markets and let $n\uparrow\infty$.  The goal is to compute limiting indifference prices and optimal position sizes while making connections with the theory of large deviations for the random variables $Y_{n}$, the completely unhedgeable component of the claim $B_n$ in the $n^{th}$ market.  As will be shown, assuming a LDP for $\cbra{Y_n}_{n\in\N}$ (see Definition \ref{Def:LDP} below), large positions arise endogenously when purchasing optimal quantities, and for large quantities there are non-trivial effects on limiting indifference prices. To keep a concrete example in mind, note that for the large market example of Section \ref{S:LM_Ex}, the embedding corresponds to being able to trade in the first $n$ assets $S^1,...,S^n$. To make the embedding precise in the general case we assume
\begin{assumption}\label{ass: asympt_sc}
For each $n\in\mathbb{N}$ there is a complete filtered probability space $\basisn$ with accompanying sub-filtrations $\filtg^n, \filth^n$, assets $S^n$, probability measures $\qprob^n_0$ and claims $B^n$ so that Assumptions \ref{ass: filt}, \ref{ass: asset_mkt}, \ref{ass: claim_decomp} and \ref{ass: abstract_int} hold.
\end{assumption}

At this point, for the convenience of the reader, we recall the definition of the LDP appropriate for our setup
\begin{definition}\label{Def:LDP}
Let $S$ be a Polish space with Borel sigma-algebra $\mathcal{B}(S)$.  Let $\probspacen$ be a sequence of probability spaces.  We say that a collection of random variables $(\xi_n)_{n\in\N}$ from $\Omega^n$ to $S$ has a LDP with good rate function $I:S\to [0,\infty]$ and scaling $r_n$ if $r_n\rightarrow \infty$ and
\begin{enumerate}[(i)]
\item For each $s\ge 0$, the set $\Phi(s) = \left\{ s\in S: I(s)\le s\right\}$
is a compact subset of $S$; in particular, $I$ is lower semi-continuous.
\item For every open $G\subset S$, $\varliminf_{n\uparrow\infty}(1/r_n)\log\left(\prob^n\bra{ \xi_n\in G}\right) \geq -\inf_{s\in G} I(s)$.
\item For every closed $F\subset S$, $\varlimsup_{n\uparrow\infty}(1/r_n)\log\left(\prob^n\bra{ \xi_n\in F}\right)\leq -\inf_{s\in F} I(s)$.
\end{enumerate}
\end{definition}

In this paper we take $S=\mathbb{R}$ and $\xi_{n}=Y_{n}$. By the lower-semicontinuity of $I$, if additionally $I(0) = 0$ if and only if $y=0$ we see that for all $\eps>0$, $\prob^n\bra{|Y_n|\geq \eps}\rightarrow 0$ so that the laws of $Y_n$ are weakly converging to the Dirac mass at $0$. In other words, the unhedgeable component of the contingent claim is vanishing as $n\uparrow\infty$. To motivate why it is reasonable to assume this, consider again the large market example of Section \ref{S:LM_Ex}. Here, according to Lemma \ref{lem: h_well_posed} the unhedgeable component $Y_n = \sum_{i=n+1}^\infty B_i$ is going to $0$ in $L^2(\prob)$, hence in probability.  The strengthening of this convergence from one in probability to a LDP is natural in view of the G\"{a}rtner-Ellis theorem (see Section \ref{S:ExistenceLDP}) and, in light of Varadhan's integral lemma \cite[Section 4.3]{MR1619036},  works particularly well for identifying limiting indifference prices and optimal purchase quantities for an
exponential investor, as is now discussed.

Under Assumption \ref{ass: asympt_sc}, for $n\in\N$ and $q_n\in\R$, the indifference price $p^n(q_n)$ from Proposition \ref{prop: opt_invest_no_n} in the $n^{th}$ market takes the form:
\begin{equation}\label{eq: p_n_first_alt_not_new_a}
\begin{split}
 p^{n}(q_n)  &= d^n -\frac{1}{q_n a}\log\left(\espalt{\prob^n}{e^{-q_n aY_n}}\right),
\end{split}
\end{equation}
where $d^n = \espalt{\qprob^n_0}{D_n}$. Now, assume that $\cbra{Y_n}_{n\in\N}$ satisfies an LDP as in Definition \ref{Def:LDP} with $I(y)=0\Leftrightarrow y=0$. As $d^n$ is the replication cost for the hedgeable component of the claim, and as the unhedgeable component is vanishing according to the LDP, one would naively expect that $\lim_{n\uparrow\infty}p^n(q_n)=\lim_{n\uparrow\infty}d^n=d$. Indeed, this is the case for bounded positions (i.e. $\sup_n|q_n|<\infty)$) as shown in Proposition \ref{prop: ldp_indiff_px} below.  However for unbounded positions, under appropriate integrability assumptions, we see from \eqref{eq: p_n_first_alt_not_new} that Varadhan's integral lemma implies
\begin{equation*}
\lim_{n\uparrow\infty} \frac{q_n}{r_n}\rightarrow l \neq 0\Longrightarrow \lim_{n\uparrow\infty}\left(p^n(q_n)-d^n\right)= -\frac{1}{l a}\sup_{y\in\R}\left(-l a y - I(y)\right).
\end{equation*}
If, as is common  for the large market example of Section \ref{S:LM_Ex}, we assume additionally that $I$ is strictly convex, one obtains for all $l \neq 0$ that $-(1/(a l))\sup_{y\in\R}\left(-l a y - I(y)\right)\neq 0$, and hence there is a non-zero large deviations effect on the limiting indifference price. Now, there are numerous questions which arise from the above heuristic argument:
\begin{enumerate}[(1)]
\item When does the LDP hold?
\item What are the limiting indifference prices if the LDP does hold? What if $q_n/r_n \rightarrow 0$?  What if $|q_n|/r_n \rightarrow \infty$?
\item When does it follow that $q_n/r_n\rightarrow l$ for some $0 < |l| < \infty$?  What is the relationship between $\hat{q}_n$ and $r_n$ for the optimal quantities $\hat{q}_n$ of Proposition \ref{prop: opt_qn_no_n}?
\end{enumerate}

To address these questions, the analysis is split into two parts. First, in Section \ref{SSS:LargeCLAIM_LDP}, a LDP for $\cbra{Y_n}_{n\in\N}$ is assumed to hold. Then, Proposition \ref{prop: ldp_indiff_px} computes limiting indifference prices, showing how prices change with the limiting value of $q_n/r_n$.  Additionally, in Section \ref{SSS:OptimalQuantities_LDP}, Proposition \ref{prop: opt_purchase_range} compares $\hat{q}_n$ to the large deviations rate $r_n$, where $\hat{q}_n$ is the optimal purchase quantity from Proposition \ref{prop: opt_qn_no_n}. Here it is shown that if one can buy shares of the claim at a price $\tilde{p} \neq d^n$ then for all reasonable prices $\tilde{p}$ (as defined below) it follows that $0 < \liminf_{n\uparrow\infty} |\hat{q}_n|/r_n \leq \limsup_{n\uparrow\infty}|\hat{q}_n|/r_n < \infty$. Therefore, one is typically within the large deviations regime where non-trivial effects to the limiting indifference price take place.  Lastly, in Section \ref{S:ExistenceLDP} we give general sufficient conditions which guarantee a LDP for $\cbra{Y_n}_{n\in\N}$, and explicitly prove the LDP for $\cbra{Y_n}_{n\in\N}$ for two large market examples.

\subsection{Large Claim Analysis Assuming an LDP}\label{SSS:LargeCLAIM_LDP}

Denote by
\begin{equation}\label{eq: Y_n_cgf}
\Lambda_n(\lambda) = \log\left(\espalt{\prob^n}{e^{\lambda Y_n}}\right);\qquad \lambda\in\R,
\end{equation}
so that \eqref{eq: p_n_first_alt_not_new_a} becomes
\begin{equation}\label{eq: p_n_first_alt_not_new}
\begin{split}
 p^{n}(q_n)  & = d^n - \frac{1}{q_n a}\Lambda_n(-q_n a).\\
\end{split}
\end{equation}
Note that Assumption \ref{ass: asympt_sc} implies, by applying Assumption \ref{ass: abstract_int} for each $n$, that $\Lambda_n(\lambda)<\infty$ for all $\lambda\in\R$. Also, note that by Holder's inequality
\begin{equation}\label{eq: p_n_dec}
q\mapsto p^n(q) \textrm{ is decreasing}.
\end{equation}
We begin by assuming a LDP for $\cbra{Y_n}_{n\in\N}$
\begin{assumption}\label{ass: ldp}
The random variables $\{Y_{n}\}_{n\in\N}$  satisfy the LDP with scaling $\cbra{r_n}_{n\in\N}$ and with good rate function $I(y)$.\footnote{Note that we do not necessarily assume convexity of the rate function $I(y)$.} Additionally, $I(y)=0\Leftrightarrow y=0$, and there is a constant $\delta > 0$
such that for $\eps= \pm \delta$
\begin{equation}\label{eq: cgf_limit_int}
\limsup_{n\uparrow\infty} \frac{1}{r_n}\log\left(\espalt{\prob^n}{e^{\eps r_n Y_n}}\right)  = \limsup_{n\uparrow\infty} \frac{1}{r_n}\Lambda_n(\eps r_n) < \infty.
\end{equation}
\end{assumption}

The bound \eqref{eq: cgf_limit_int} is a moment condition imposed in order to guarantee the validity of Varadhan's integral lemma for affine functions, and it is necessary to know the maximal bounds $\eps$ which still yield \eqref{eq: cgf_limit_int}. Thus, define
\begin{equation}\label{eq: cgf_bounds}
\begin{split}
\bar{M}&= \sup\cbra{ M: \ \limsup_{n\uparrow\infty} \frac{1}{r_n}\Lambda_n(M r_n) < \infty};\quad \underline{M}= \inf\cbra{M:\ \limsup_{n\uparrow\infty}\frac{1}{r_n}\Lambda_n(Mr_n) < \infty}.
\end{split}
\end{equation}
Assumption \ref{ass: ldp} implies $\bar{M} \geq \delta$ and $\underline{M}\leq -\delta$. Next, define
\begin{equation}\label{eq: cgf_bounds_A}
\begin{split}
M^*&= \sup\cbra{ M: \ \sup_{y\in\R}\left(My - I(y)\right) < \infty};\quad M_*= \inf\cbra{M:\ \sup_{y\in\R}\left(My - I(y)\right) < \infty}.
\end{split}
\end{equation}
 Under Assumption \ref{ass: ldp}, for $\underline{M} < M < \bar{M}$, Varadhan's integral lemma implies\footnote{Note that the moment condition in Varadhan's lemma,
$\limsup_{n\uparrow\infty} (1/r_n)\Lambda_n(\gamma Mr_n)<\infty, \gamma>1$, holds with $\gamma=\hat{M}/M>1$ for any $\hat{M}\in(M,\bar{M})$.}
\begin{equation}\label{eq: varad_pos}
\lim_{n\uparrow\infty} \frac{1}{r_n}\Lambda_n(Mr_n) = \sup_{y\in\R}\left(My - I(y)\right)<\infty.
\end{equation}
Therefore, we see that
\begin{equation}\label{eq: M_M_bounds}
-\infty \leq M_*\leq \underline{M}\leq -\delta < 0 < \delta \leq \bar{M}\leq M^*\leq \infty\footnote{There are many examples where $M_* < \underline{M}$ and $M^* > \bar{M}$: see \cite[Chapter 4.3]{MR1619036}.}.
\end{equation}

Let Assumption \ref{ass: ldp} hold. For any sequence $\cbra{q_n}_{n\in\N}$  such that $|q_n|\rightarrow \infty$, up to oscillations, there are three different regimes in which to study the limiting indifference prices $p^n(q_n)$
\begin{equation}
\lim_{n\rightarrow\infty}\frac{|q_{n}|}{r_{n}}%
\begin{cases}
=0 & \text{Regime 1}\\
\in(0,\infty) & \text{Regime 2}\\
=\infty & \text{Regime 3}\\
\end{cases}.
\label{Def:ThreePossibleRegimes}%
\end{equation}

Proposition \ref{prop: ldp_indiff_px} below gives a detailed characterization of the limiting indifference prices. As the resultant prices take many forms depending upon  $M_*,\underline{M}, \bar{M}$ and $M^*$, for ease of presentation, we first state results when $-\infty =\underline{M}$ and $\bar{M} = \infty$. Note that this forces $-\infty = M_*$, $M^* = \infty$.  Then, the general result is given. Here, limiting indifference prices are identified in the case where $q_n/r_n \rightarrow l$ for all $l\in [-\infty,\infty]$ except where $l \in [-\underline{M}/a,-M_*/a]$ or $l \in [-M^*/a,-\bar{M}/a]$.

\begin{proposition}\label{prop: ldp_indiff_px_good}
Let Assumptions \ref{ass: asympt_sc} and \ref{ass: ldp} hold.  Furthermore, assume that $-\infty = \underline{M}$, $\bar{M} = \infty$ where $\underline{M},\bar{M}$ are in \eqref{eq: cgf_bounds}. Then
\begin{enumerate}[(1)]
\item (Regime 1) If $\lim_{n\uparrow\infty}|q_n|/r_n = 0$ then $\lim_{n\uparrow\infty} \left(p^n(q_n) - d^n\right) = 0$. In particular, this holds if $\sup_n |q_n| < \infty$.
\item (Regime 2) If $\lim_{n\uparrow\infty} |q_n|/r_n = l \in (0,\infty)$ then
\begin{equation}\label{eq: limit_px_ell_finite2}
\begin{split}
\lim_{n\uparrow\infty} \left(p^n(q_n)-d^n\right) = -\frac{1}{l a}\sup_{y\in\R}\left(-l a y - I(y)\right)\in\R.
\end{split}
\end{equation}
\item (Regime 3).
\begin{enumerate}[1)]
\item If $\lim_{n\uparrow\infty} q_n/r_n = \infty$ then
 $\limsup_{n\uparrow\infty}\left(p^n(q_n)-d^n\right) \leq \inf\cbra{y\ | \ I(y) < \infty}.$
\item If $\lim_{n\uparrow\infty} q_n/r_n = -\infty$ then
 $\liminf_{n\uparrow\infty}\left(p^n(q_n)-d^n\right) \geq \sup\cbra{y\ | \ I(y) < \infty}.$
\end{enumerate}
\end{enumerate}
\end{proposition}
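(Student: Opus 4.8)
The plan is to work entirely from the closed form $p^{n}(q_{n})-d^{n}=-\frac{1}{q_{n}a}\Lambda_{n}(-q_{n}a)$ of \eqref{eq: p_n_first_alt_not_new}, using two structural facts: each $\Lambda_{n}$ is convex with $\Lambda_{n}(0)=0$, and, since the hypotheses $\underline{M}=-\infty$, $\bar{M}=\infty$ place all of $\R$ inside $(\underline{M},\bar{M})$, Varadhan's lemma in the form \eqref{eq: varad_pos} gives $\frac{1}{r_{n}}\Lambda_{n}(Mr_{n})\to\Lambda^{*}(M):=\sup_{y\in\R}(My-I(y))$ for every $M\in\R$, with $\Lambda^{*}$ finite, convex, nonnegative and $\Lambda^{*}(0)=0$. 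Since $M\mapsto\frac{1}{r_{n}}\Lambda_{n}(Mr_{n})$ is convex and converges pointwise on $\R$, the convergence is locally uniform, so in fact $\frac{1}{r_{n}}\Lambda_{n}(M_{n}r_{n})\to\Lambda^{*}(M)$ whenever $M_{n}\to M$. I would then set $M_{n}:=-q_{n}a/r_{n}$ throughout, so that $p^{n}(q_{n})-d^{n}=-\frac{1}{M_{n}}\cdot\frac{1}{r_{n}}\Lambda_{n}(M_{n}r_{n})$.

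With this setup, Regime 2 is immediate: there $M_{n}$ has a finite nonzero limit (equal to $-la$ or $+la$ according to the eventual sign of $q_{n}$), and passing to the limit in the displayed identity gives $-\frac{1}{\lim M_{n}}\Lambda^{*}(\lim M_{n})$, which is the asserted value and is finite because $\Lambda^{*}$ is. Regime 3 I would handle not via Varadhan but by exponential tilting against the LDP lower bound: for $q_{n}/r_{n}\to+\infty$ (so $q_{n}>0$ eventually), fix any $z$ with $I(z)<\infty$ and any $\epsilon>0$; from $\espalt{\prob^{n}}{e^{-q_{n}aY_{n}}}\ge e^{-q_{n}a(z+\epsilon)}\prob^{n}\bra{Y_{n}\in(z-\epsilon,z+\epsilon)}$ one gets
\[
p^{n}(q_{n})-d^{n}\;\le\; z+\epsilon-\frac{1}{q_{n}a}\log\prob^{n}\bra{Y_{n}\in(z-\epsilon,z+\epsilon)}.
\]
The LDP lower bound keeps $-\frac{1}{r_{n}}\log\prob^{n}\bra{Y_{n}\in(z-\epsilon,z+\epsilon)}$ eventually in $[0,I(z)+1]$ while $r_{n}/q_{n}\to0$, so the last term vanishes; letting $\epsilon\downarrow0$ and $z\downarrow\inf\{y:I(y)<\infty\}$ gives the bound, and the case $q_{n}/r_{n}\to-\infty$ is the reflection, tilting toward $+\infty$ and bounding below by $\sup\{y:I(y)<\infty\}$.

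Regime 1 is the delicate one, and the step I expect to be the main obstacle lives here. When $|q_{n}|/r_{n}\to0$, i.e. $M_{n}\to0$, convexity of $\Lambda_{n}$ together with $\Lambda_{n}(0)=0$ sandwiches, for each fixed $\delta>0$ and all large $n$ (so $|M_{n}|<\delta$),
\[
-\frac{1}{\delta r_{n}}\Lambda_{n}(-\delta r_{n})\;\le\;p^{n}(q_{n})-d^{n}\;\le\;\frac{1}{\delta r_{n}}\Lambda_{n}(\delta r_{n})
\]
for $q_{n}>0$ (the bound for $q_{n}<0$ follows from the same computation applied to $-q_{n}$). Letting $n\to\infty$ and then $\delta\downarrow0$, the two sides converge to the one-sided derivatives $(\Lambda^{*})'(0^{\mp})$, so the whole regime reduces to showing these vanish, equivalently that $0$ remains the unique minimizer of $\Lambda^{**}$, the lower semicontinuous convex hull of $I$. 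This is exactly where the special hypotheses $\underline{M}=-\infty$, $\bar M=\infty$ are used: they are equivalent to $\Lambda^{*}$ being finite on all of $\R$, hence to $I$ being superlinear, which makes the sublevel sets of $\Lambda^{**}$ compact and the infimum in the convex-hull representation $\Lambda^{**}(y)=\inf\{\sum_{i}\lambda_{i}I(y_{i}):\lambda_{i}\ge0,\ \sum_{i}\lambda_{i}=1,\ \sum_{i}\lambda_{i}y_{i}=y\}$ attained; since $I\ge0$ has $0$ as its only zero, a minimizing convex combination at a point where $\Lambda^{**}=\min I=0$ must be concentrated at $0$, so that point is $0$. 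Hence $(\Lambda^{*})'(0^{\pm})=0$ and $p^{n}(q_{n})-d^{n}\to0$; the case $\sup_{n}|q_{n}|<\infty$ is subsumed since it forces $|q_{n}|/r_{n}\to0$. Apart from this convex-analytic step, the proof is a chain of direct applications of \eqref{eq: varad_pos} and the lower bound in Definition \ref{Def:LDP}, with only the routine care of checking the Varadhan moment condition along the shifted sequences $M_{n}r_{n}$, as already flagged in the footnotes to \eqref{eq: varad_pos}.
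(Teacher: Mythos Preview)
Your proof is correct and largely parallels the paper's (which derives Proposition~\ref{prop: ldp_indiff_px_good} from the more general Proposition~\ref{prop: ldp_indiff_px}, proved in Appendix~\ref{Appendix:prop: ldp_indiff_px}): both routes rest on the monotonicity of $q\mapsto -\frac{1}{qa}\Lambda_n(-qa)$, a sandwich, and Varadhan's lemma. The genuine differences are these. In Regime~2 you replace the paper's explicit $(l\pm\gamma)$-sandwich plus continuity-of-the-envelope argument by the single observation that pointwise convergence of finite convex functions on $\R$ is locally uniform, so $\frac{1}{r_n}\Lambda_n(M_nr_n)\to\Lambda^*(M)$ whenever $M_n\to M$; this is cleaner. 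In Regime~3 your direct tilting against the LDP lower bound is more elementary than the paper's route, which applies Varadhan at each level $M$ and then separately computes $\lim_{M\uparrow\infty}\inf_y\bigl(y+I(y)/(Ma)\bigr)=\inf\{y:I(y)<\infty\}$. In Regime~1 the two arguments are the same fact in different language: the paper shows (via Lemmas~\ref{lem: rf_coerciv} and~\ref{lem: ldp_zero_fact}) that $\argmax_y(\epsilon y-I(y))$ shrinks to $\{0\}$ as $\epsilon\to0$, which is exactly your $(\Lambda^*)'(0^\pm)=0$; your Carath\'eodory justification is valid, but the attainment of the infimum deserves one line (it follows from the superlinearity of $I$, which you correctly extract from $\underline{M}=-\infty$, $\bar{M}=\infty$, together with lower semicontinuity). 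One harmless slip: with $M_n=-q_na/r_n$ the identity reads $p^n(q_n)-d^n=+\frac{1}{M_n}\cdot\frac{1}{r_n}\Lambda_n(M_nr_n)$, not with the minus sign you wrote.
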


As mentioned above, Proposition \ref{prop: ldp_indiff_px_good} is a direct result of the more general case where it is not assumed that $-\infty = \underline{M}$ and $\bar{M} = \infty$.  The general case is now presented (its proof is in Appendix \ref{Appendix:prop: ldp_indiff_px}).
\begin{proposition}\label{prop: ldp_indiff_px}
Let Assumptions \ref{ass: asympt_sc} and \ref{ass: ldp} hold. Then
\begin{enumerate}[(1)]
\item (Regime 1) If $\lim_{n\uparrow\infty}|q_n|/r_n = 0$ then $\lim_{n\uparrow\infty} \left(p^n(q_n)-d^n\right) = 0$. In particular, this holds if $\sup_{n}|q_n| < \infty$.
\item (Regime 2) If $\lim_{n\uparrow\infty} q_n/r_n = l \in (0,\infty)$ then
\begin{equation}\label{eq: limit_px_ell_finite}
\begin{split}
l \in \left(0,-\frac{\underline{M}}{a}\right) & \Longrightarrow \lim_{n\uparrow\infty} \left(p^n(q_n) - d^n\right) = \inf_{y\in\R}\left(y +\frac{1}{al}I(y)\right) \in (-\infty,0];\\
M_* > -\infty, l > -\frac{M_*}{a} & \Longrightarrow \lim_{n\uparrow\infty} \left(p^n(q_n)-d^n\right) = -\infty.
\end{split}
\end{equation}
 If $\lim_{n\uparrow\infty} q_n/r_n = l \in (-\infty,0)$ then
\begin{equation}\label{eq: limit_px_ell_finite0}
\begin{split}
l \in \left(-\frac{\bar{M}}{a},0\right) & \Longrightarrow \lim_{n\uparrow\infty} \left(p^n(q_n)-d^n\right) = \sup_{y\in\R}\left(y + \frac{1}{l a}I(y)\right) \in [0,\infty);\\
M^*<\infty, l < -\frac{M^*}{a} & \Longrightarrow \lim_{n\uparrow\infty} \left(p^n(q_n)-d^n\right) = \infty.
\end{split}
\end{equation}
\item (Regime 3).
\begin{enumerate}[1)]
\item If $\lim_{n\uparrow\infty} q_n/r_n = \infty$ then
\begin{equation*}
\begin{split}
M_* > -\infty &\Longrightarrow \lim_{n\uparrow\infty} \left(p^n(q_n) - d^n\right) = -\infty;\\
\underline{M} = -\infty &\Longrightarrow \limsup_{n\uparrow\infty}\left(p^n(q_n)-d^n\right) \leq \inf\cbra{y\ | \ I(y) < \infty}.
\end{split}
\end{equation*}
\item If $\lim_{n\uparrow\infty} q_n/r_n = -\infty$ then
\begin{equation*}
\begin{split}
M^*  < \infty &\Longrightarrow \lim_{n\uparrow\infty} \left(p^n(q_n) - d^n\right) = \infty;\\
\bar{M} = \infty &\Longrightarrow \liminf_{n\uparrow\infty}\left(p^n(q_n)-d^n\right) \geq \sup\cbra{y\ | \ I(y) < \infty}.
\end{split}
\end{equation*}
\end{enumerate}
\end{enumerate}
\end{proposition}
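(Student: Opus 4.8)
The whole statement is driven by the identity, valid for each $n$ with $q_n\neq 0$ and $M_n:=-q_na/r_n$,
\[
p^n(q_n)-d^n\;=\;-\frac{1}{q_na}\,\Lambda_n(-q_na)\;=\;\frac{1}{M_nr_n}\,\Lambda_n(M_nr_n),
\]
so everything reduces to the asymptotics of $(1/r_n)\Lambda_n(M_nr_n)$ together with the location of $M_n$ relative to the thresholds $M_*\le\underline M\le-\delta<0<\delta\le\bar M\le M^*$ of \eqref{eq: M_M_bounds}. I would set $L(M):=\sup_{y\in\R}(My-I(y))$ and use three tools repeatedly. \emph{(Varadhan, upgraded.)} For fixed $M\in(\underline M,\bar M)$, \eqref{eq: varad_pos} gives $(1/r_n)\Lambda_n(Mr_n)\to L(M)$; if $M_n\to M\in(\underline M,\bar M)$ this upgrades to $(1/r_n)\Lambda_n(M_nr_n)\to L(M)$ by trapping $M_nr_n$ between $M'r_n$ and $M''r_n$ with $\underline M<M'<M<M''<\bar M$ (convexity and finiteness of each $\Lambda_n$), then letting $M'\uparrow M$, $M''\downarrow M$ and using continuity of the convex $L$ on $(M_*,M^*)\supseteq(\underline M,\bar M)$. \emph{(LDP lower bound.)} For \emph{any} $M_n\to M$ (even $M\notin(\underline M,\bar M)$), restricting $\espalt{\prob^n}{e^{M_nr_nY_n}}$ to a shrinking neighbourhood of an arbitrary $y$ and applying Definition \ref{Def:LDP}(ii) yields $\liminf_n(1/r_n)\Lambda_n(M_nr_n)\ge L(M)$. \emph{(Monotonicity.)} By \eqref{eq: p_n_dec}, $q\mapsto p^n(q)$ is decreasing, so whenever $|q_n|/r_n$ has a limit I can sandwich $p^n(q_n)$ between values $p^n(c\,r_n)$.

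\emph{Regimes 2 and 3.} If $q_n/r_n\to l\in(0,\infty)$ then $M_n\to M:=-la<0$. For $l\in(0,-\underline M/a)$ one has $M\in(\underline M,\bar M)$, the upgraded Varadhan limit applies, and $p^n(q_n)-d^n\to L(M)/M$, which an elementary rewriting identifies with $\inf_y\big(y+(al)^{-1}I(y)\big)\in(-\infty,0]$ (value $0$ at $y=0$; finiteness of $L(M)$ gives $>-\infty$). If instead $M_*>-\infty$ and $l>-M_*/a$, pick $l'$ with $-M_*/a<l'<l$; eventually $q_n>l'r_n$, so by monotonicity $p^n(q_n)-d^n\le p^n(l'r_n)-d^n=(M'r_n)^{-1}\Lambda_n(M'r_n)$ with $M':=-l'a<M_*$, and the LDP lower bound forces $(1/r_n)\Lambda_n(M'r_n)\to+\infty$, sending $p^n(q_n)-d^n\to-\infty$ (as $M'<0$). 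The case $l\in(-\infty,0)$ is the mirror image ($M=-la>0$, with $\bar M,M^*$ replacing $-\underline M,M_*$ and the sign of $1/M$ flipped), yielding \eqref{eq: limit_px_ell_finite0}. For Regime 3 with $q_n/r_n\to+\infty$ ($-\infty$ symmetric): if $M_*>-\infty$, the monotonicity argument with any fixed $l'>-M_*/a$ gives $p^n(q_n)-d^n\le p^n(l'r_n)-d^n\to-\infty$; if $\underline M=-\infty$, for any $y>y_{\min}:=\inf\cbra{z\ |\ I(z)<\infty}$ pick $y^*<y$ with $I(y^*)<\infty$ and $\rho>0$ with $y^*+\rho<y$, so that (since $q_na>0$ eventually) $\Lambda_n(-q_na)\ge-q_nay+\log\prob^n\big(Y_n\in(y^*-\rho,y^*+\rho)\big)$; dividing by $q_na$ and using $r_n/q_n\to0$ with Definition \ref{Def:LDP}(ii) gives $\limsup_n(p^n(q_n)-d^n)\le y$, and $y\downarrow y_{\min}$ finishes it.

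\emph{Regime 1.} If $|q_n|/r_n\to0$, fix $\epsilon\in(0,\delta)$ so $\pm\epsilon\in(\underline M,\bar M)$; eventually $-(\epsilon/a)r_n<q_n<(\epsilon/a)r_n$, hence by monotonicity and the upgraded Varadhan limit
\[
-\tfrac1\epsilon L(-\epsilon)\;\le\;\liminf_n\big(p^n(q_n)-d^n\big)\;\le\;\limsup_n\big(p^n(q_n)-d^n\big)\;\le\;\tfrac1\epsilon L(\epsilon).
\]
It then suffices to show $L(\pm\epsilon)/\epsilon\to0$ as $\epsilon\downarrow0$, i.e. that the convex function $L$ — finite and nonnegative near $0$ with $L(0)=0$ — is differentiable at $0$ with $L'(0)=0$. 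The bounds $L'_-(0)\le0\le L'_+(0)$ are automatic; for the reverse one uses that finiteness of $L(\pm\delta)$ (forced by \eqref{eq: cgf_limit_int}) implies $I(y)\ge\delta|y|-C_0$ globally, which confines the supremum defining $L(M)$ to a fixed compact $[-Y_0,Y_0]$ for all small $|M|$, and then splits that set at $\pm\eta$, using $\inf_{\eta\le|y|\le Y_0}I(y)>0$ ($I$ a good rate function with $0$ its only zero) to get $L(M)\le|M|\eta$ for small $|M|$, hence $L'_\pm(0)\le\eta$ for every $\eta>0$. Thus $p^n(q_n)-d^n\to0$, in particular when $\sup_n|q_n|<\infty$.

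\emph{Where the difficulty lies.} Everything except the last step is bookkeeping: upgrading Varadhan to slowly varying exponents by convexity, carefully tracking the $M_*/\underline M/\bar M/M^*$ sub-cases, and the routine algebra turning $L(M)/M$ into the $\inf/\sup$ forms of \eqref{eq: limit_px_ell_finite}–\eqref{eq: limit_px_ell_finite0}. The genuinely delicate point is the differentiability of the Legendre-type transform $L$ at the origin: since $I$ is \emph{not} assumed convex, one cannot simply invoke Fenchel duality, and the argument must extract exactly enough coercivity of $I$ from the moment bound \eqref{eq: cgf_limit_int} and enough separation from $0$ from the good-rate-function hypothesis — this is the step I expect to require the most care.
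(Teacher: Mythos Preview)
Your proof is correct and follows essentially the same strategy as the paper's: the monotonicity sandwich \eqref{eq: p_n_dec}, Varadhan's lemma at fixed exponents with a $\gamma\downarrow 0$ continuity argument for Regime~2, and for Regime~1 the key step that $L(\eps)/\eps\to 0$ via coercivity of $I$ and compactness of its sublevel sets (the paper packages this as Lemmas~\ref{lem: rf_coerciv} and~\ref{lem: ldp_zero_fact}, while you phrase it equivalently as differentiability of $L$ at $0$). Your use of the raw LDP lower bound to force $(1/r_n)\Lambda_n(M'r_n)\to+\infty$ whenever $L(M')=+\infty$ is a bit cleaner than the paper's subsequence-plus-contradiction device, and your direct exponential estimate in the Regime~3 case $\underline{M}=-\infty$ bypasses the paper's intermediate $M\to\infty$ limit, but these are minor streamlinings within the same overall framework.
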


\subsection{Optimal Quantities and the Large Deviations Rate}\label{SSS:OptimalQuantities_LDP}

Proposition \ref{prop: ldp_indiff_px} shows that depending upon the limit $\lim_{n\uparrow\infty} q_n/r_n = l$ there is a wide range of possible limiting indifference prices.  The purpose of this section is to characterize $l$ when $q_n$ is obtained by purchasing optimal quantities : i.e. $q_n = \hat{q}_n(\tilde{p}^n)$ from \eqref{eq: hat_qn_foc_no_n} for a given $\tilde{p}^n\in (\underline{b}_n,\bar{b}_n)$  where $\underline{b}_n,\bar{b}_n$ are from Lemma \ref{lem: arb_free_range_no_n} and take the form
\begin{equation*}
\underline{b}_n = d^n + \essinf{\prob^n}{Y_n};\qquad \bar{b}_n = d^n + \esssup{\prob^n}{Y_n}.
\end{equation*}
The main result of this section shows that when purchasing optimal quantities the limits $l = 0,\pm\infty$ cannot happen for all ``reasonable prices'' $\tilde{p}^n\in (\underline{b}_n,\bar{b}_n)$ such that $\liminf_{n\uparrow\infty} |\tilde{p}^n-d^n| > 0$.

For ease of presentation, it is necessary to rule out both one particular trivial case and the case of oscillations in the range of arbitrage free prices.  Regarding the trivial case, consider when there is some subsequence (still labeled $n$) and some limit $l$ such that
\begin{equation*}
l = \lim_{n\uparrow\infty}\esssup{\prob^n}{Y_n} = \lim_{n\uparrow\infty}\essinf{\prob^n}{Y_n}.
\end{equation*}
As $(1/r_n)\log\left(\prob^n\bra{Y_n \geq l + \eps}\right) \rightarrow -\infty$ and $(1/r_n)\log\left(\prob^n\bra{Y_n\leq l - \eps}\right) \rightarrow -\infty$ for all $\eps > 0$ it is clear from Assumption \ref{ass: ldp} that $l = 0$ and $I$ takes the form $I(0) = 0, I(y) = \infty, y\neq 0$.  Furthermore, as for any $q_n$, the indifference price $p^n(q_n)$ is arbitrage free, it trivially follows that $\lim_{n\uparrow\infty} p^n(q_n)-d^n = 0$ for all sequences $\cbra{q_n}_{n\in\N}$, irrespective of its relation to $\cbra{r_n}_{n\in\N}$.  Thus, to rule out this trivial case, and to rule out the case of oscillations where prices may be arbitrage free for one sequence $\cbra{n_k}_{k\in\N}$ tending towards infinity but not for another sequence $\cbra{n_j}_{j\in\N}$ (such cases can be treated separately), we assume

\begin{ass}\label{ass: general_arb_free_range} There exist $l < u$ such that $\lim_{n\uparrow\infty} \essinf{\prob^n}{Y_n} = l$ and $\lim_{n\uparrow\infty} \esssup{\prob^n}{Y_n} = u$. Thus, if $p\in (l,u)$ then for large enough $n$, $p+d^n \in (\underline{b}_n,\bar{b}_n)$ is arbitrage free.
\end{ass}

\begin{remark} That $l\leq 0 \leq u$ follows as $I(y) = 0$ if and only if $y=0$. Also, in the large market example of Section \ref{S:LM_Ex} where $\essinf{\prob}{Y_n} =\sum_{i=n+1}^\infty \essinf{\prob}{B_i}$ and $\esssup{\prob}{Y_n} = \sum_{i=n+1}^\infty \esssup{\prob}{B_i}$, it follows that $l=0$ or $l=\infty$ and $u=0$ or $u=\infty$.  Indeed, as Lemma \ref{lem: h_well_posed} shows that $\sum_{i=1}^\infty \espalt{}{B_i}$ exists and is finite, we have for each $n$ that
\begin{equation*}
\sum_{i=n+1}^\infty \esssup{\prob}{B_i} = \sum_{i=n+1}^\infty (\esssup{\prob}{B_i}-\espalt{}{B_i}) + \sum_{i=n+1}^\infty \espalt{}{B_i}.
\end{equation*}
Thus, if $\sum_{i=1}^\infty (\esssup{\prob}{B_i}-\espalt{}{B_i}) < \infty$ then $u = 0$, else $u=\infty$.  A similar statement holds for $l$.  Therefore, Assumption \ref{ass: general_arb_free_range} requires either $l=0,u=\infty$ or $l=-\infty,u=0$ or $l=-\infty, u=\infty$.
\end{remark}

Now, by ``reasonable'' price it is meant that $\tilde{p}^n$ must be such that, from a large deviations perspective, there is some probability of $Y_n$ taking values either below $\tilde{p}^n-d^n$ or above $\tilde{p}^n-d^n$.  This is enforced by assuming that the rate function $I$ from Assumption \ref{ass: ldp} is not identically infinity either above or below $\tilde{p}^n - d^n$.

We assume that for $n$ large enough, $\tilde{p}^n -d^n=p^{n}\in(l,u)$ and $\lim_{n\rightarrow\infty}p^{n}=p\in(l,u)$. In fact, to streamline the presentation, we further assume $p^{n}\equiv p$ is not changing with $n$\footnote{The results of Proposition \ref{prop: opt_purchase_range} are not changed if $\tilde{p}^n-d^n = p^n\rightarrow p\in (l,u)$}.  We thus have the following result.

\begin{proposition}\label{prop: opt_purchase_range}
Let Assumptions \ref{ass: asympt_sc}, \ref{ass: ldp} and \ref{ass: general_arb_free_range} hold. For a given $\tilde{p}^n\in (\underline{b}_n,\bar{b}_n)$ set $\hat{q}_n = \hat{q}_n(\tilde{p}^n)$ as in Proposition \ref{prop: opt_qn_no_n}. Recall the definition of $(l,u)$ in Assumption \ref{ass: general_arb_free_range}. Then
\begin{enumerate}[(1)]
\item Assume $l < 0$ and let $\tilde{p}^n  = d^n + p$ for $l < p < 0$. Then
\begin{enumerate}[a)]
\item $\liminf_{n\uparrow\infty} \hat{q}_n/r_n > 0$.
\item If there exists a $y < p$ such that $I(y) < \infty$ then $\limsup_{n\uparrow\infty} \hat{q}_n/r_n < \infty$.
\end{enumerate}
\item Assume $u>0$ and let $\tilde{p}^n = d^n + p$ for $0 < p < u$.  Then
\begin{enumerate}[a)]
\item $\limsup_{n\uparrow\infty} \hat{q}_n/r_n < 0$.
\item If there exists a $y>p$ such that $I(y) < \infty$ then $\liminf_{n\uparrow\infty} \hat{q}_n/r_n > -\infty$.
\end{enumerate}
\end{enumerate}

\end{proposition}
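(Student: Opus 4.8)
\emph{Overall strategy.} The plan is to work throughout with the variational characterization of $\hat q_n$. By Proposition~\ref{prop: opt_qn_no_n} (legitimately applied in the $n$th market for large $n$, since $\tilde p^n-d^n=p\in(l,u)$ puts $\tilde p^n$ in $(\underline b_n,\bar b_n)$ by Assumption~\ref{ass: general_arb_free_range}), $\hat q_n$ is the unique minimizer of the strictly convex smooth function $g_n(q):=q(\tilde p^n-d^n)+\frac1a\Lambda_n(-qa)=qp+\frac1a\Lambda_n(-qa)$, equivalently the unique root of $g_n'(q)=p-\dot\Lambda_n(-qa)$. Since $g_n'$ is nondecreasing, $\hat q_n\ge q_0\iff\dot\Lambda_n(-q_0a)\ge p$ and $\hat q_n<q_0\iff\dot\Lambda_n(-q_0a)<p$, so the whole problem reduces to locating, for suitable $q_0$ of order $r_n$, the sign of $\dot\Lambda_n(-q_0a)-p$. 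I would first reduce to statement (1), obtaining (2) by running the same argument with $Y_n$ replaced by $-Y_n$ (so $I(\cdot)$ becomes $I(-\cdot)$, $(l,u)$ becomes $(-u,-l)$, $p$ becomes $-p$, and $\hat q_n$ becomes $-\hat q_n$). The key analytic input is Varadhan's lemma in the form \eqref{eq: varad_pos}: the convex functions $M\mapsto(1/r_n)\Lambda_n(Mr_n)$ converge on $(\underline M,\bar M)\supseteq(-\delta,\delta)$ to $I^*(M):=\sup_y(My-I(y))$, hence (by convexity) locally uniformly, and their derivatives $\dot\Lambda_n(Mr_n)$ converge to $(I^*)'(M)$ at each differentiability point of $I^*$. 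I also need the auxiliary fact $\partial I^*(0)=\{0\}$: this holds because $I$ has $0$ as its unique zero and has at least linear growth ($I^*$ finite near $0$ forces $I(y)\ge(\delta/2)|y|-C$), so the biconjugate $I^{**}$ has a unique zero at $0$; in particular $I^*$ is differentiable at $0$ with $(I^*)'(0)=0$, so $(I^*)'(M)\to0$ as $M\to0$.

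\emph{Part (1)(a).} For the lower bound I would choose $l_0\in(0,\delta/a)$ with $-l_0a$ a differentiability point of $I^*$ and $(I^*)'(-l_0a)>p$ — possible since $(I^*)'(-l_0a)\to0>p$ as $l_0\downarrow0$. Then $\dot\Lambda_n(-l_0ar_n)\to(I^*)'(-l_0a)>p$, so $\dot\Lambda_n(-l_0ar_n)>p$ for all large $n$, i.e. $\hat q_n\ge l_0r_n$ for all large $n$, whence $\liminf_{n\uparrow\infty}\hat q_n/r_n\ge l_0>0$.

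\emph{Part (1)(b).} For the upper bound I would fix $y_1<p$ with $I(y_1)<\infty$ and small $\eta,\rho>0$ with $y_1+\rho<p-\eta$; set $c:=(p-\eta)-(y_1+\rho)>0$. The goal is to show $\dot\Lambda_n(-Mr_n)<p$ eventually for some fixed large $M>0$, which by the sign dichotomy above forces $\hat q_n<(M/a)r_n$ and hence $\limsup\hat q_n/r_n\le M/a<\infty$. Writing $\dot\Lambda_n(-Mr_n)$ as the mean of $Y_n$ under the exponential tilt $\propto e^{-Mr_nY_n}$, the normalizing constant is bounded below using the large deviations \emph{lower} bound at $y_1$: $\prob^n[Y_n\in(y_1-\rho,y_1+\rho)]\ge e^{-r_n(I(y_1)+1)}$ eventually, so $\espalt{\prob^n}{e^{-Mr_nY_n}}\ge e^{-r_n(M(y_1+\rho)+I(y_1)+1)}$; combined with $\espalt{\prob^n}{e^{-Mr_nY_n}\one_{Y_n>p-\eta}}\le e^{-Mr_n(p-\eta)}$ this bounds the tilted probability of $\{Y_n>p-\eta\}$ by $e^{-r_n(Mc-I(y_1)-1)}\to0$ once $Mc>I(y_1)+1$. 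The contribution of $\{Y_n>p-\eta\}$ to the tilted mean is handled by a further truncation at a large level $A=A(M)$, using $ye^{-Mr_ny}\le(2/(eMr_n))e^{-Mr_nA/2}$ for $y>A$ together with the same lower bound on the normalizing constant; since $Y_n\one_{Y_n\le p-\eta}\le(p-\eta)\one_{Y_n\le p-\eta}$ and the tilted mass of $\{Y_n\le p-\eta\}$ tends to $1$, one gets $\limsup_{n\uparrow\infty}\dot\Lambda_n(-Mr_n)\le p-\eta<p$, as desired. (Alternatively one may invoke \cite[Theorem III.17]{MR1739680}: if $\hat q_n/r_n\to\infty$ then $\dot\Lambda_n(-\hat q_na)\to\inf\{y:I(y)<\infty\}\le y_1<p$, contradicting the first-order condition $\dot\Lambda_n(-\hat q_na)=p$.)

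\emph{Main obstacle.} Part (a) is essentially soft — Varadhan's lemma plus convergence of derivatives of convex functions plus the fact $\partial I^*(0)=\{0\}$ (the only place the possible non-convexity of $I$ needs a moment's care). The genuine difficulty is part (b): ruling out $\hat q_n/r_n\to\infty$ hinges on the sharp large deviations lower bound at a reasonable point $y_1$, which prevents the normalizing constant $\espalt{\prob^n}{e^{-Mr_nY_n}}$ of the exponential tilt from decaying too fast, together with the elementary-but-fiddly truncation estimates that tame the upper tail of the tilted law of $Y_n$ — this is exactly the step the introduction flags as requiring ``a more technical proof.''
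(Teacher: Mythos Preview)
Your proposal is correct. Part (a) is essentially the paper's argument in different clothing: the paper's Lemma~\ref{lem: exp_tilt_fact} (supported by Lemmas~\ref{lem: rf_coerciv} and~\ref{lem: ldp_zero_fact}) is precisely the statement that $\dot\Lambda_n(\eps r_n)$ has its cluster points in $\partial I^*(\eps)$ and that $\partial I^*(\eps)\to\{0\}$ as $\eps\to 0$, which is what you obtain via convergence of derivatives of convex functions together with $\partial I^*(0)=\{0\}$.

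Part (b) is where the approaches genuinely diverge. You work with the first-order condition $\dot\Lambda_n(-\hat q_na)=p$ and estimate the tilted mean directly: an LDP lower bound at $y_1$ controls the normalizing constant from below, Markov handles the mass above $p-\eta$, and a calculus bound on $ye^{-Mr_ny}$ deals with the far tail. The paper instead exploits the \emph{variational} side: since $\hat q_n$ minimizes $g_n$, comparing with $q=0$ gives $\Lambda_n(-\hat q_na)/(\hat q_na)\le -p$; monotonicity of $q\mapsto q^{-1}\Lambda_n(-q)$ transfers this to $\Lambda_n(-Mr_n)/(Mr_n)\le -p$ for every $M>0$, and Varadhan then yields $\sup_y(-M'y-I(y))\le -M'p$ for all $M'$, forcing $I(y)<\infty\Rightarrow y\ge p$ and the contradiction. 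The paper's route avoids all the tilting and truncation bookkeeping by staying at the level of $\Lambda_n$ rather than $\dot\Lambda_n$; your route is more hands-on but has the advantage of giving a direct bound $\hat q_n\le (M/a)r_n$ rather than arguing by contradiction, and it makes transparent exactly where the point $y_1$ with $I(y_1)<\infty$ enters.
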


\begin{proof}
For $Y_n$  note that \eqref{eq: hat_qn_foc_no_n} takes the form
\begin{equation}\label{eq: p_value}
p = \dot{\Lambda}_n(-\hat{q}_na) = \frac{\espalt{\prob^n}{Y_n e^{-\hat{q}_na Y_n}}}{\espalt{\prob^n}{e^{-\hat{q}_naY_n}}}.
\end{equation}
The convexity of $\Lambda_n$ implies the map $q\mapsto \espalt{\prob^n}{Y_ne^{qY_n}}/\espalt{\prob^n}{e^{qY_n}}$ is increasing in $q$.

(Proof of the Statements in $(1)$) Assume that $\liminf_{n\uparrow\infty} \hat{q}_n/r_n \leq 0$.  Let $\eps > 0$ and take a sub-sequence (still labeled $n$) such that for $n$ large enough we may assume that $a\hat{q}_n \leq \eps r_n$. We then have from \eqref{eq: p_value} that
\begin{equation}\label{eq: p_lb_1}
p \geq \frac{\espalt{\prob^n}{Y_n e^{-\eps r_n Y_n}}}{\espalt{\prob^n}{e^{-\eps r_n Y_n}}}.
\end{equation}
Taking $n\uparrow\infty$  and then $\eps\rightarrow 0$ in the above we have
\begin{equation*}
p\geq \liminf_{\eps\rightarrow 0} \liminf_{n\uparrow\infty}\frac{\espalt{\prob}{Y_n e^{-\eps r_n Y_n}}}{\espalt{\prob}{e^{-\eps r_n Y_n }}} = 0,
\end{equation*}
where the last follows by Lemma \ref{lem: exp_tilt_fact} below (with $\eps$ therein being $-\eps$). This gives that $p\geq 0$, but this is a contradiction as $p<0$. Therefore, $\liminf_{n\uparrow\infty}\hat{q}_n/r_n > 0$. Now, assume that $p<0$ is such that $I(y) < \infty$ for some $y<p$.  Assume, by way of contradiction that $\limsup_{n\uparrow\infty} \hat{q}_n/r_n = \infty$ and take a sub-sequence (still labeled $n$) such that $\lim_{n\uparrow\infty} \hat{q}_n/r_n = \infty$. Recall from \eqref{eq: opt_q_var_prob_no_n} that $\hat{q}_n$ minimizes
\begin{equation*}
qp+\frac{1}{a}\Lambda_n(-qa) = qp + \frac{1}{a}\log\left(\espalt{\prob^n}{e^{-qa Y_n}}\right),
\end{equation*}
over $\R$.  In particular, by taking $q=0$ and noting that $\hat{q}_n/r_n\rightarrow \infty$ implies $\hat{q}_n > 0$ for $n$ large enough we have that
\begin{equation*}
\hat{q}_n p + \frac{1}{a}\log\left(\espalt{\prob^n}{e^{-\hat{q}_n a Y_n}}\right) \leq 0 \Longrightarrow \frac{1}{\hat{q}_na}\log\left(\espalt{\prob^n}{e^{-\hat{q}_n a Y_n}}\right) \leq -p.
\end{equation*}
Holder's inequality implies the map $q\mapsto (1/q)\log\left(\espalt{\prob^n}{e^{-qY_n }}\right)$ is increasing for $q>0$.
Now, let $M>0$ be given.  As $\hat{q}_n/r_n\rightarrow \infty$ for $n$ large enough we may assume that $\hat{q}_na \geq M r_n \geq 0$.  Thus
\begin{equation*}
\frac{1}{Mr_n}\log\left(\espalt{\prob^n}{e^{-M r_n Y_n}}\right) \leq -p.
\end{equation*}
We have assumed that $\cbra{Y_n}_{n\in\N}$ satisfies a LDP with scaling $\cbra{r_n}_{n\in\N}$ and rate function $I$.  Thus, for any $M' < M$ the above inequality implies, from Varadhan's integral lemma and Holder's inequality that
\begin{equation*}
\lim_{n\uparrow\infty} \frac{1}{r_n}\log\left(\espalt{\prob^n}{e^{-M'r_n Y_n}}\right) = \sup_{y\in\R}\left(-M' y - I(y)\right) \leq -M'p.
\end{equation*}
Thus, for any $y\in\R$
\begin{equation*}
-M'y - I(y) \leq -M'p \Longrightarrow -y - \frac{I(y)}{M'} \leq -p.
\end{equation*}
Thus, for $y$ such that $I(y)<\infty$ we have, taking $M'\uparrow\infty$, which is allowed because $M>0$ was arbitrary, that $-y\leq -p$ or $y\geq p$.  Thus, $I(y)<\infty$ implies $y\geq p$ which implies that $I(y) = \infty$ for $y < p$.  But, this is a contradiction as it was assumed $p$ was such that $I(y) < \infty$ for some $y<p$.  Therefore, $\limsup_{n\uparrow\infty} \hat{q}_n/r_n < \infty$.

(Proof of the Statements in (2)): The case of $p>0$ is nearly identical to that for $p<0$.  Assume that $\limsup_{n\uparrow\infty} \hat{q}_n/r_n \geq 0$.  Let $\eps > 0$ and take a sub-sequence (still labeled $n$) such that for $n$ large enough we may assume that $a\hat{q}_n \geq -\eps r_n$. Similarly to \eqref{eq: p_lb_1} we have
\begin{equation*}
p \leq \frac{\espalt{\prob^n}{Y_ne^{\eps r_n Y_n}}}{\espalt{\prob^n}{e^{\eps r_n Y_n}}}.
\end{equation*}
Again, taking $n\uparrow\infty$  and then $\eps\rightarrow 0$ we have
\begin{equation*}
p\leq \limsup_{\eps\rightarrow 0}\limsup_{n\uparrow\infty} \frac{\espalt{\prob^n}{Y_n e^{\eps r_n Y_n}}}{\espalt{\prob^n}{e^{\eps r_n Y_n }}} = 0,
\end{equation*}
where the last equality follows by Lemma \ref{lem: exp_tilt_fact} below. This is a contradiction as $p>0$. Therefore, $\limsup_{n\uparrow\infty}\hat{q}_n/r_n < 0$. Now, assume that $p>0$ is such that $I(y) < \infty$ for some $y>p$.  Assume, by way of contradiction that $\liminf_{n\uparrow\infty} \hat{q_n}/r_n = -\infty$ and take a sub-sequence (still labeled $n$) such that $\lim_{n\uparrow\infty} \hat{q}_n/r_n = -\infty$. As $\hat{q}_n$ minimizes
\begin{equation*}
qp+\frac{1}{a}\Lambda_n(-qa) = qp + \frac{1}{a}\log\left(\espalt{\prob^n}{e^{-qa Y_n}}\right)
\end{equation*}
over $\R$, taking $q=0$ gives (recall that $\hat{q}_n < 0$):
\begin{equation*}
\hat{q}_n p + \frac{1}{a}\log\left(\espalt{\prob^n}{e^{-\hat{q}_n a Y_n}}\right) \leq 0 \Longrightarrow -\frac{1}{\hat{q}_na}\log\left(\espalt{\prob}{e^{-\hat{q}_n a Y_n}}\right) \leq p.
\end{equation*}
The map $q\mapsto (1/q)\log\left(\espalt{\prob^n}{e^{qY_n }}\right)$ is increasing for $q>0$.  Now, let $M>0$ be given.  As $\hat{q}_n/r_n\rightarrow -\infty$ for $n$ large enough we may assume that $-\hat{q}_na \geq M r_n \geq 0$.  Thus
\begin{equation*}
\frac{1}{Mr_n}\log\left(\espalt{\prob^n}{e^{M r_n Y_n}}\right) \leq p.
\end{equation*}
By assumption $\cbra{Y_n}_{n\in\N}$ satisfies a LDP with scaling $\cbra{r_n}_{n\in\N}$ and rate function $I$.  Thus, for any $M' < M$ the above inequality implies, from Varadhan's integral lemma and Holder's inequality that
\begin{equation*}
\lim_{n\uparrow\infty} \frac{1}{r_n}\log\left(\espalt{\prob^n}{e^{M'r_n Y_n}}\right) = \sup_{y\in\R}\left(M' y - I(y)\right) \leq M'p.
\end{equation*}
Thus, for any $y\in\R$,
 $M'y - I(y) \leq M'p \Longrightarrow y - \frac{I(y)}{M'} \leq p.$
Thus, for $y$ such that $I(y)<\infty$ we have, taking $M'\uparrow\infty$, which is allowed because $M>0$ was arbitrary, that $y\leq p$.  Thus, $I(y)<\infty$ implies $y\leq p$ which implies that $I(y) = \infty$ for $y > p$.  But, this is a contradiction as it was assumed $p$ was such that $I(y) < \infty$ for some $y>p$.  Therefore, $\liminf_{n\uparrow\infty} \hat{q}_n/r_n > -\infty$.

\end{proof}

\section{On the Existence of the Large Deviations Principle}\label{S:ExistenceLDP}

The goal of this section is to provide conditions under which Assumption \ref{ass: ldp} holds for the random variables $\cbra{Y_n}_{n\in\mathbb{N}}$. Large deviations theory is a well developed subject (see \cite{MR1619036,MR722136}), and one particularly well known and widely used result for proving existence of an LDP is the G\"{a}rtner-Ellis theorem (\cite[Theorem 2.3.6]{MR1619036}), which we now recall.

Consider $\Lambda_n$ from \eqref{eq: Y_n_cgf}. The G\"{a}rtner-Ellis theorem yields a LDP for $\cbra{Y_n}_{n\in\N}$ (see Definition \ref{Def:LDP} with $S=\mathbb{R}$ and $\xi_n=Y_n$) if there exist a sequence $\{r_{n}\}_{n\in\mathbb{N}}$ with $r_n\rightarrow\infty$ such that
\begin{enumerate}[(i)]
\item The limit $\Gamma(\lambda)= \lim_{n\uparrow\infty} (1/r_n)\Lambda_n(\lambda r_n)$ is well defined for each $\lambda\in (-\infty,\infty]$.
\item $0\in \mathcal{D}^{\circ}_{\Gamma}$, the interior of $\mathcal{D}_{\Gamma}=\left\{\lambda\in\mathbb{R}: \Gamma(\lambda)<\infty\right\}$.
\item $\Gamma(\cdot)$ is lower-semi-continuous in $\mathbb{R}$ and differentiable in $\mathcal{D}^{\circ}_{\Gamma}$.
\item Either $\mathcal{D}_{\Gamma}=\mathbb{R}$ or $\Gamma$ is steep at $\partial \mathcal{D}_{\Gamma}$, i.e. for $\lambda\in \mathcal{D}^{\circ}_{\Gamma}$, $\lim_{\lambda\rightarrow \mu}\left|\dot{\Gamma}(\lambda)\right|=\infty$ for every $\mu\in \partial \mathcal{D}_{\Gamma}$.
\end{enumerate}

Indeed, under the above conditions, an LDP for $\cbra{Y_n}_{n\in\mathbb{N}}$ and scaling $\cbra{r_n}_{n\in\mathbb{N}}$ follows with (good) rate function $I(y)=\sup_{\lambda\in\mathbb{R}}\left(\lambda y-\Gamma(\lambda)\right)$, the Legendre-Fenchel transform of $\Gamma(\lambda)$. Furthermore, as the moment condition \eqref{eq: cgf_limit_int} follows by $(ii)$ above, Assumption \ref{ass: ldp} holds.

However, we stress that the G\"{a}rtner-Ellis theorem is only sufficient, and not necessary, for the LDP to hold. To reinforce this, we present two concrete examples from the large market of Section \ref{S:LM_Ex}. In Subsection \ref{SS:GaussianExample}, the LDP is indeed given via the G\"{a}rtner-Ellis theorem and everything (limiting indifference prices, optimal purchasing rates) can be computed explicitly. By contrast, in Subsection \ref{SS:PoissonExample}, even though the G\"{a}rtner-Ellis theorem cannot be used, a LDP still holds and all the quantities of interest for this paper can be computed explicitly by appealing to the special structure of the example.

\subsection{Example : Gaussian case}\label{SS:GaussianExample}

As in Example \ref{ex: gaussian}, assume $B_i\stackrel{\prob}{\sim} N(\gamma_i,\delta_i^2)$ so that
\begin{equation*}
\begin{split}
Y_n &= \sum_{i=n+1}^\infty B_{i} \stackrel{\prob}{\sim} N\left(\sum_{i=n+1}^\infty \gamma_{i},\sum_{i=n+1}^\infty \delta^{2}_{i}\right).
\end{split}
\end{equation*}
Set $r_{n} =  \left(\sum_{i=n+1}^{\infty}\delta^{2}_{i}\right)^{-1}$. Clearly, $\lim_{n\rightarrow\infty}r_{n}=\infty$, and, for any $\lambda\in\R$:
\begin{equation}\label{eq: gaussian_mgf_val}
\begin{split}
\Gamma(\lambda)&=\lim_{n\uparrow\infty}\frac{1}{r_n}\log\left(\espalt{\prob}{e^{\lambda r_n Y_n}}\right) =\lim_{n\uparrow\infty}\left[ \frac{1}{2}\lambda^2+\lambda \frac{\sum_{i=n+1}^\infty \gamma_{i}}{r_{n}}\right]=\frac{1}{2}\lambda^2.
\end{split}
\end{equation}

Thus, the G\"{a}rtner-Ellis theorem implies $\cbra{Y_n}_{n\in\N}$ satisfies a LDP with rate $r_n$ and good rate function $I(y) = \sup_{\lambda\in\reals}\left(\lambda y-\lambda^2/2\right) = y^2/2$, yielding Assumption \ref{ass: ldp}.

\subsubsection*{The Limiting Indifference Price} Recall from \eqref{eq: x1_def} that $\lim_{n\uparrow\infty}d^n = d$ exists. Next, recall the formula for $p^n(q)$ from \eqref{eq: indiff_px_n} and the explicit formula for $\Gamma_i$ from Example \ref{ex: gaussian}. Putting these together, we obtain
\begin{equation}\label{eq: gaussian_exact_prices}
p^n(q_n) - d^n =  -\frac{1}{q_na}\sum_{i=n+1}^\infty \left(\frac{1}{2}q_n^2a^2\delta_i^2 - q_na\gamma_i\right) = -\frac{1}{2}q_na\sum_{i=n+1}^\infty \delta^2_i + \sum_{i=n+1}^\infty\gamma_i = -a\frac{q_n}{2r_n} + \sum_{i=n+1}^\infty \gamma_i,
\end{equation}
where the last equality uses the definition of $r_n$. Thus, $\lim_{n\uparrow\infty}\left(p^n(q_n)-d^n + a q_n/(2r_n)\right) = 0$, and hence for any subsequence (still labeled $n$) such that $\lim_{n\uparrow\infty}|q_n|/r_n$ exists:
\begin{enumerate}[(1)]
\item (Regime 1) If $\lim_{n\uparrow\infty} |q_n|/r_n = 0$ then $\lim_{n\uparrow\infty} p^n(q_n) = d$.
\item (Regime 2) If $\lim_{n\uparrow\infty} |q_n|/r_n = l\neq 0$ then $\lim_{n\uparrow\infty} p^n(q_n) = d -(1/2)al$.
\item (Regime 3) If $\lim_{n\uparrow\infty} |q_n|/r_n = \infty$ then $\lim_{n\uparrow\infty} p^n(q_n)-d = \pm\infty$ if $q_n/r_n\rightarrow\mp\infty$.
\end{enumerate}
Note that because $\underline{M}=M_* = -\infty$, $\bar{M}=M^*=\infty$ and $I(y) = y^2/2$ these results are entirely consistent with Proposition \ref{prop: ldp_indiff_px_good}.

\subsubsection*{Optimal Quantities} Consider the case where $q_n$ is obtained by purchasing optimal quantities : i.e. $q_n = \hat{q}_n$ from \eqref{eq: gaussian_opt_quant}.  Using the definition of $r_n$ it follows that
\begin{equation}\label{eq: gaussian_hat_q_n_ex}
\hat{q}_n = \frac{r_n}{a}\left(d^n - \tilde{p}^n + \sum_{i=n+1}^\infty\gamma_i\right).
\end{equation}
If $\tilde{p}^n-d^n = p \neq 0$ then $\hat{q}_n/r_n \rightarrow -p/a$. If $\tilde{p}^n = d^n$ then $\hat{q}_n/r_n\rightarrow 0$, even though it is certainly possible for $|\hat{q}_n|\rightarrow \infty$, as can easily be seen from \eqref{eq: gaussian_hat_q_n_ex}.

\begin{remark}
For optimal quantities with $\tilde{p}^n -d^n = p$, as $\hat{q}_n/r_n \rightarrow l = -p/a$ we have $\lim_{n\uparrow\infty} p^n(\hat{q}_n) = d + p/2$.
\end{remark}

\subsection{Example: Poisson case}\label{SS:PoissonExample}

As in Example \ref{ex: poisson} assume $B_{i} \stackrel{\prob}{\sim} \textrm{Poi}(\beta_i)$ so that
\begin{equation}\label{eq: Y_n_poisson_dist}
Y_n = \sum_{i=n+1}^\infty B_{i} \stackrel{\prob}{\sim} \textrm{Poi}\left(\sum_{i=n+1}^\infty \beta_i\right).
\end{equation}
The distributional equality holds because for $\lambda\in\R$, $\espalt{\prob}{e^{\lambda Y_n}} = e^{(e^\lambda-1)\sum_{i=n+1}^\infty \beta_i}$. Set $r_n = -\log\left(\sum_{i=n+1}^\infty \beta_i\right)$ and note that $r_n\rightarrow\infty$. A straightforward calculation shows
\begin{equation}\label{eq: poisson_r_n_lim_cgf}
\lim_{n\uparrow\infty} \frac{1}{r_n}\log\left(\espalt{\prob}{e^{\lambda r_n Y_n}}\right) = \begin{cases} \infty & \lambda > 1 \\ 0 & \lambda \leq 1 \end{cases}.
\end{equation}
In this instance, one cannot use the G\"{a}rtner-Ellis theorem to assert the existence of an LDP for the $\cbra{Y_n}_{n\in\mathbb{N}}$.  However, as the explicit distribution for $Y_n$ is known, a LDP for $\cbra{Y_n}_{n\in\mathbb{N}}$ still holds.

\begin{prop}\label{prop: Y_n_poisson_ldp}

Let $\cbra{\beta_i}_{i\in\mathbb{N}}$ be $\prob$-independent such that $B_{i} \stackrel{\prob}{\sim} \textrm{Poi}(\beta_i)$ for each $i$, and assume  $\sum_{i=1}^\infty \beta_i < \infty$.  Set $r_n = -\log\left(\sum_{i=n+1}^\infty \beta_i\right)$. Then $\cbra{Y_n}_{n\in\mathbb{N}}$ from \eqref{eq: Y_n_poisson_dist} satisfies a LDP with rate $\cbra{r_n}_{n\in\mathbb{N}}$ and good rate function
\begin{equation*}
I(y) = \begin{cases} \infty & y\not\in\cbra{0,1,2,3,...}\\ y & y\in\cbra{0,1,2,3,...}\end{cases}.
\end{equation*}

\end{prop}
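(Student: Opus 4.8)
The plan is to verify the lower bound (condition (ii)) and the upper bound (condition (iii)) of Definition \ref{Def:LDP} directly, exploiting the explicit Poisson law $Y_n \sim \textrm{Poi}(\lambda_n)$ with $\lambda_n \triangleq \sum_{i=n+1}^\infty \beta_i$. The key computational input is the asymptotics of $\prob^n[Y_n = k]$ for a fixed nonnegative integer $k$: since $\prob^n[Y_n = k] = e^{-\lambda_n}\lambda_n^k/k!$ and $\lambda_n \to 0$ with $r_n = -\log\lambda_n \to \infty$, one has $e^{-\lambda_n}\to 1$ and $\lambda_n^k = e^{-k r_n}$, so
\begin{equation*}
\lim_{n\uparrow\infty}\frac{1}{r_n}\log\left(\prob^n[Y_n = k]\right) = -k = -I(k),\qquad k\in\{0,1,2,\dots\}.
\end{equation*}
I would also record the crude but essential monotonicity fact that for any set $A\subset[0,\infty)$ the probability $\prob^n[Y_n\in A]$ is bounded above by $\prob^n[Y_n \ge \lceil \inf A\rceil]$ when $\inf A > 0$, and that the Poisson tail satisfies $\prob^n[Y_n\ge m] \le C \lambda_n^m$ for $\lambda_n$ small (e.g. $\prob^n[Y_n\ge m]\le \sum_{k\ge m}\lambda_n^k/k! \le e^{\lambda_n}\lambda_n^m/m!$), which gives $\limsup_n (1/r_n)\log\prob^n[Y_n\ge m] \le -m$. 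Good rate function: the level sets $\{y : I(y)\le s\}$ are finite sets of nonnegative integers, hence compact.

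First I would prove the large deviations upper bound for closed $F\subset\R$. If $0\in F$ then $\inf_{y\in F} I(y)=0$ and the bound is trivial since $\log\prob^n[Y_n\in F]\le 0$. If $0\notin F$, let $m = \min\left(F\cap\{1,2,3,\dots\}\right)$ if this set is nonempty, and note $\inf_{y\in F} I(y) = m$ (values at non-integer points are $+\infty$); since $F$ is closed and avoids $0$, there is $\eta>0$ with $F\cap[0,\eta)=\emptyset$, so $\prob^n[Y_n\in F]\le \prob^n[Y_n\ge m]$ (for $n$ large, using that $Y_n$ is integer-valued and the only integers in $[0,\eta)$ would be $0$). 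Then $\limsup_n(1/r_n)\log\prob^n[Y_n\in F]\le -m = -\inf_{y\in F}I(y)$ by the tail estimate above. If $F\cap\{1,2,\dots\}=\emptyset$ and $0\notin F$, then $\inf_{y\in F}I(y)=+\infty$ and, for $n$ large, $\prob^n[Y_n\in F]=0$ (all mass of $Y_n$ sits on the nonnegative integers, and $F$ contains none of them within any bounded window, while the tail beyond is handled by the same estimate), so the bound holds.

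Next, the large deviations lower bound for open $G\subset\R$: if $G\cap\{0,1,2,\dots\}=\emptyset$ then $\inf_{y\in G}I(y)=+\infty$ and there is nothing to prove; otherwise pick the integer $k\in G$ minimizing $I(k)=k$ over $G\cap\{0,1,\dots\}$, and since $G$ is open it contains $\{k\}$ as a point with positive mass, giving $\prob^n[Y_n\in G]\ge \prob^n[Y_n = k] = e^{-\lambda_n}\lambda_n^k/k!$, hence $\liminf_n(1/r_n)\log\prob^n[Y_n\in G]\ge -k = -\inf_{y\in G}I(y)$ by the pointwise asymptotics. Finally I would note Assumption \ref{ass: ldp} is fully satisfied: $I(y)=0 \Leftrightarrow y=0$ is immediate, and the moment condition \eqref{eq: cgf_limit_int} follows from \eqref{eq: poisson_r_n_lim_cgf}, which shows $\limsup_n (1/r_n)\Lambda_n(\eps r_n)$ is finite (equal to $0$) for any $\eps\le 1$, in particular for $\eps=\pm\delta$ with any $\delta\in(0,1]$. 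The main obstacle is not any single deep estimate but the bookkeeping in the upper bound for general closed sets — carefully separating the contribution of the atom at $0$, a bounded window of positive integers, and the Poisson tail — together with making precise the claim that for $n$ large the only integer in a small neighborhood of $0$ is $0$ itself; everything else is elementary given the explicit Poisson form.
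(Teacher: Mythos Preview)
Your argument is correct and slightly more direct than the paper's. The paper establishes the same pointwise asymptotics $\lim_n (1/r_n)\log\prob[Y_n=k]=-k$ and uses it for the lower bound exactly as you do; for the upper bound, however, it first proves the bound only on \emph{compact} sets (listing the finitely many integers in $A$ and applying the principle-of-the-largest-term, \cite[Lemma 1.2.15]{MR1619036}) to obtain a weak LDP, and then upgrades to the full LDP by separately proving exponential tightness via a Chernoff bound $\prob[Y_n\ge K]\le e^{-\lambda K + e^{-r_n}(e^\lambda-1)}$ optimized over $\lambda$. Your route bypasses this two-step structure: because $Y_n$ is integer-valued, for any closed $F$ with $0\notin F$ you can bound $\prob[Y_n\in F]\le \prob[Y_n\ge m]$ directly (no $\eta$-neighborhood argument is even needed---$\prob[Y_n\in F]=\sum_{k\in F\cap\mathbb{Z}_{\ge 0}}\prob[Y_n=k]$), and your elementary tail estimate $\prob[Y_n\ge m]\le e^{\lambda_n}\lambda_n^m/m!$ immediately gives the upper bound on all closed sets. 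What the paper's approach buys is modularity (it plugs into the standard weak-LDP-plus-tightness machinery); what yours buys is brevity and the avoidance of any appeal to that general theorem. The minor wrinkles in your writeup---the unnecessary $\eta$ and the ``for $n$ large'' qualifier in the empty-intersection case---are cosmetic and do not affect correctness.
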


\begin{proof}[Proof of Proposition \ref{prop: Y_n_poisson_ldp}]

The result follows via a manual calculation as \eqref{eq: Y_n_poisson_dist} shows that $Y_n\stackrel{\prob}{\sim} \textrm{Poi}\left(e^{-r_n}\right)$ and hence for any $y\in \cbra{0,1,2,3,\dots}$ we have
\begin{equation}\label{eq: Y_n_poisson_as}
\frac{1}{r_n}\log\left(\prob\bra{Y_n=y}\right) = -\frac{1}{r_n}e^{-r_n} - y - \frac{1}{r_n}\log(y!).
\end{equation}
Indeed, let $y\in\cbra{0,1,2,3,\dots}$ and assume $A\subset\reals$ is open with $y\in A$. By \eqref{eq: Y_n_poisson_as} we have
\begin{equation*}
\liminf_{n\uparrow\infty} \frac{1}{r_n}\log\left(\prob\bra{Y_n\in A}\right) \geq \liminf_{n\uparrow\infty}\frac{1}{r_n}\log\left(\prob\bra{Y_n = y}\right) = -y = -I(y),
\end{equation*}
and hence the large deviations lower bound follows from \cite[p. 6]{MR1619036}. Next let $A\subset\reals$ be compact.  If $A\cap\cbra{0,1,2,...} = \emptyset$ then $\lim_{n\uparrow\infty} (1/r_n)\log(\prob\bra{Y_n\in A}) = -\infty = -\inf_{y\in A} I(y)$.  Else, denote by $y_1, ..., y_M$ the (finite) set of non-negative integers in $A$. We have
\begin{equation*}
\begin{split}
\limsup_{n\uparrow\infty}\frac{1}{r_n}\log\left(\prob\bra{Y_n\in A}\right) &= \limsup_{n\uparrow\infty} \frac{1}{r_n}\log\left(\sum_{m=1}^M \prob\bra{Y_n = y_m}\right);\\
&= \max_{m=1,...,M}\cbra{\limsup_{n\uparrow\infty}\frac{1}{r_n}\log\left(\prob\bra{Y_n=y_m}\right)};\\
&= \max_{m=1,...,M}\cbra{-y_m} = -\min_{m=1,...,M}\cbra{I(y_m)}= -\inf_{y\in A} I(y),
\end{split}
\end{equation*}
where the second equality follows from \cite[Lemma 1.2.15]{MR1619036}. Thus, $\cbra{Y_n}_{n\in\mathbb{N}}$ solves the weak LDP with rate function $I$ and scaling $\cbra{r_n}_{n\in\mathbb{N}}$.  Now, let $K>0$.  For any $\lambda > 0$ we have
\begin{equation*}
\prob\bra{Y_n \geq K}\leq e^{-\lambda K + \log\left(\espalt{\prob}{e^{\lambda Y_n}}\right)} = e^{-\lambda K + e^{-r_n}(e^{\lambda} -1)}.
\end{equation*}
Minimizing the right hand side over $\lambda > 0$ we see that the optimal $\hat{\lambda}$ satisfies $\hat{\lambda} = r_n + \log(K)$.  Plugging this value and taking limits gives
\begin{equation*}
\limsup_{n\uparrow\infty} \frac{1}{r_n}\log\left(\prob\bra{Y_n\geq K}\right) \leq \limsup_{n\uparrow\infty}\frac{1}{r_n}\left(-(r_n+\log(K))K + e^{-r_n}(e^{r_n+\log(K)}-1)\right) = -K.
\end{equation*}
As $Y_n\geq 0$ the above inequality implies that $\cbra{Y_n}_{n\in\mathbb{N}}$ is exponentially tight with scaling $\cbra{r_n}_{n\in\mathbb{N}}$ and hence the full LDP follows.
\end{proof}

\begin{rem} Note that $I$ satisfies the hypotheses in Assumption \ref{ass: ldp}.  Also, note that for $\lambda\in\R$
\begin{equation*}
\frac{1}{r_n}\log\left(\espalt{\prob}{e^{\lambda r_n Y_n}}\right) = \frac{1}{r_n}\left(e^{\lambda r_n}-1\right)e^{-r_n} = \frac{e^{(\lambda-1)r_n}}{r_n} - \frac{1}{r_n}e^{-r_n}.
\end{equation*}
From here it follows that (\ref{eq: poisson_r_n_lim_cgf}) holds. Thus, Assumption \ref{ass: ldp} holds for $\cbra{Y_n}_{n\in\mathbb{N}}$.  Additionally, we have $M_* = \underline{M} = -\infty$, and $M^* = \bar{M} = 1$.
\end{rem}

\subsubsection*{The Limiting Indifference Price}
Recall that $d=\lim_{n\uparrow\infty} d^n$ exists. As the assumptions therein hold, using Proposition \ref{prop: ldp_indiff_px}, as well as the explicit formula for $I$, calculation shows for any subsequence (still labeled $n$) such that $\lim_{n\uparrow\infty}|q_n|/r_n$ exists:
\begin{enumerate}[(1)]
\item (Regime 1) If $\lim_{n\uparrow\infty} |q_n|/r_n = 0$ then $\lim_{n\uparrow\infty} p^n(q_n) = d$.
\item (Regime 2) If $\lim_{n\uparrow\infty} |q_n|/r_n = l \neq 0$ then
\begin{equation*}
\lim_{n\uparrow\infty} p^n(q_n)-d = \begin{cases} 0 & l > -\frac{1}{a} \\ \infty & l < -\frac{1}{a}\end{cases}.
\end{equation*}
\item (Regime 3) If $\lim_{n\uparrow\infty} |q_n|/r_n = \infty$ then
\begin{equation*}
\begin{cases}
\limsup_{n\uparrow\infty} p^n(q_n)-d\leq 0 & q_n/r_n\rightarrow \infty\\ \lim_{n\uparrow\infty} p^n(q_n)-d = \infty & q_n/r_n\rightarrow -\infty
\end{cases}.
\end{equation*}
\end{enumerate}

\subsubsection*{Optimal Quantities}

Consider the case where $q_n$ is obtained by purchasing optimal quantities, i.e., $q_n = \hat{q}_n$ from Example \ref{ex: poisson}.  Recall that we require $\tilde{p}^n \geq d$ to ensure $\tilde{p}^n$ is arbitrage free for arbitrary $n$. In this instance, we have
\begin{equation}\label{eq: opt_qn_poi}
\hat{q}_n = -\frac{1}{a}\log\left(\frac{\tilde{p}^n-d^n}{\sum_{i=n+1}^\infty \beta_i}\right) = -\frac{r_n}{a} - \frac{1}{a}\log\left(\tilde{p}^n - d^n\right).
\end{equation}
Thus, if $\tilde{p}^n-d^n  = p > 0$ then $\lim_{n\uparrow\infty} \hat{q}_n/r_n = -1/a$.

\begin{remark} Interestingly, in this instance, for optimal purchases at any price $d^n + p, p > 0$, one encounters the boundary case where $\hat{q}_n/r_n\rightarrow -1/a$, which is not covered by the results in Proposition \ref{prop: ldp_indiff_px}. However, one may explicitly calculate $p^n(\hat{q}_n)$ using \eqref{eq: opt_qn_poi}, \eqref{eq: indiff_px_n} and Example \ref{ex: poisson}
\begin{equation*}
p^n(\hat{q}_n) - d^n=  - \frac{1}{\hat{q}_n a}\log\left(\espalt{\prob}{e^{-\hat{q}_na Y_n}}\right)=\frac{p+d-d^n-e^{-r_n}}{r_n + \log(p+d-d^n)}\rightarrow 0.
\end{equation*}
Additionally, one can directly show that if $q_n/r_n\rightarrow\infty$ then $p^n(q_n)\rightarrow d$.
\end{remark}

\appendix

\section{Proofs From Section \ref{S:SC}}\label{S:SC_Proof}

Before proving Propositions \ref{prop: no_claim} \ref{prop: opt_invest_no_n}, \ref{prop: opt_qn_no_n} and Lemma \ref{lem: arb_free_range_no_n}, we first state and prove some auxiliary lemmas and introduce some notation to streamline the presentation. Throughout this section, Assumptions \ref{ass: filt}, \ref{ass: asset_mkt}, \ref{ass: claim_decomp} and \ref{ass: abstract_int} are enforced.

Recall the measure $\qprob_0$ on $\G_T$ from Assumption \ref{ass: asset_mkt}. Extend $\qprob_0$ to $\F_T$ by defining
\begin{equation}\label{eq: qprob_0_extend}
\qprob_0\bra{A} = \espalt{}{\frac{d\qprob_0}{d\prob}\bigg|_{\G_T} 1_A};\qquad A\in \F_T.
\end{equation}
This extension is similar to the one in \cite[Definition 2.5]{MR1954386}, which therein was called the ``martingale preserving probability measure''.  Next, set
\begin{equation}\label{eq: qprob_0_density}
Z^0_t = \frac{d\qprob_0}{d\prob}\bigg|_{\G_t} = \frac{d\qprob_0}{d\prob}\bigg|_{\F_t};\qquad t\leq T.
\end{equation}
Lastly, recall that $\mathcal{M}$ denotes the class of equivalent local martingale measures on $\F_T$, and $\tm$ denotes the subset of $\mathcal{M}$ with finite relative entropy with respect to $\prob$. For any $\qprob\in\mathcal{M}$ define
\begin{equation}\label{qe: qprob_density}
Z^{\qprob}_t = \frac{d\qprob}{d\prob}\bigg|_{\F_t};\qquad t\leq T,
\end{equation}
so that with an abuse of notation, we have $Z^0_t = Z^{\qprob_0}_t$.


\begin{lemma}\label{lem: mart_meas_structure} If $\qprob\in\mathcal{M}$ then defining $R$ via
\begin{equation}\label{eq: q_r_def}
Z^{\qprob}_t = Z^{0}_t R_t;\qquad t\leq T,
\end{equation}
it follows that $\condespalt{}{R_t}{\G_t} = 1$ for all $t\leq T$. In particular, $\qprob=\qprob_0$ on $\G_T$.
\end{lemma}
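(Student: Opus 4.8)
The plan is to reduce the identity $\E[R_t\,|\,\G_t]=1$ to the single assertion that $\qprob$ and $\qprob_0$ agree on $\G_T$, and then to prove that assertion from the completeness of the $(\prob,\filtg;S)$-market. First I would observe that since $\qprob_0\sim\prob$ on $\G_T$, the extension \eqref{eq: qprob_0_extend} is equivalent to $\prob$ on $\F_T$, so $Z^0_t=\E[Z^0_T\,|\,\F_t]>0$ and $R_t$ in \eqref{eq: q_r_def} is well defined and positive. Since $Z^0_t$ is $\G_t$-measurable, $\G_t\subset\F_t$, and $Z^{\qprob}_t=\E[Z^{\qprob}_T\,|\,\F_t]$, the tower property gives
\[
\E[R_t\,|\,\G_t]=\frac{1}{Z^0_t}\,\E[Z^{\qprob}_t\,|\,\G_t]=\frac{1}{Z^0_t}\,\E[Z^{\qprob}_T\,|\,\G_t].
\]
If $\qprob=\qprob_0$ on $\G_T$, then for each $A\in\G_t$ we have $\int_A Z^{\qprob}_T\,d\prob=\qprob(A)=\qprob_0(A)=\int_A Z^0_T\,d\prob$, hence $\E[Z^{\qprob}_T\,|\,\G_t]=\E[Z^0_T\,|\,\G_t]=Z^0_t$, and the right-hand side above equals $1$ for every $t\le T$. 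So it suffices to prove the ``in particular'' statement $\qprob=\qprob_0$ on $\G_T$.

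To prove $\qprob=\qprob_0$ on $\G_T$ I would show $\espalt{\qprob}{\xi}=\espalt{\qprob_0}{\xi}$ for every bounded $\G_T$-measurable $\xi$. Fix such a $\xi$. By Assumption~\ref{ass: asset_mkt}(2) there are a constant $x\in\R$ and a $(\prob,\filtg;S)$-integrable strategy $\Delta$ with $X^{\Delta}_\cdot=x+\int_0^\cdot\Delta_u dS_u$ a $(\qprob_0,\filtg)$-martingale and $X^{\Delta}_T=\xi$ a.s. On the finite horizon $[0,T]$ the martingale property gives $X^{\Delta}_t=\condespalt{\qprob_0}{\xi}{\G_t}$, so $X^{\Delta}$ is uniformly bounded by $\|\xi\|_\infty$, and $x=X^{\Delta}_0=\espalt{\qprob_0}{\xi}$. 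On the other hand, by the consequences of Assumptions~\ref{ass: filt} and~\ref{ass: asset_mkt} recorded after Assumption~\ref{ass: asset_mkt} (via \cite[Proposition~8]{MR580121}), $\Delta$ is $\filt$-predictable and $(\prob,\filt;S)$-integrable, and $\int_0^\cdot\Delta_u dS_u$ computed under $\filt$ coincides with the one under $\filtg$; since $\qprob\sim\prob$, $\Delta$ is also $(\qprob,\filt;S)$-integrable, so $X^{\Delta}$ is a bounded stochastic integral against the $(\qprob,\filt)$-local martingale $S$, and hence a $(\qprob,\filt)$-martingale (boundedness is what upgrades the integral from a $\sigma$-martingale to a true martingale). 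Therefore $\espalt{\qprob}{\xi}=X^{\Delta}_0=x=\espalt{\qprob_0}{\xi}$, and letting $\xi$ range over bounded $\G_T$-measurable functions yields $\qprob=\qprob_0$ on $\G_T$.

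The main obstacle is the measure-change and filtration-enlargement bookkeeping in the second step: one must make precise that a replicating strategy for the complete sub-market $(\prob,\filtg;S)$ stays an admissible integrand once the filtration is enlarged to $\filt$ and the pricing measure is replaced by an arbitrary $\qprob\in\mathcal{M}$, and that the resulting wealth process is a genuine $(\qprob,\filt)$-martingale rather than only a local or $\sigma$-martingale. The first point follows from the invariance of $S$-integrability under filtration enlargement (\cite{MR580121}) and under equivalent change of measure; the second rests on the observation---crucial and easy to overlook---that $X^{\Delta}_t=\condespalt{\qprob_0}{\xi}{\G_t}$, and not merely $X^{\Delta}_T$, is uniformly bounded.
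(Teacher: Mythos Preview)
Your proposal is correct and follows essentially the same approach as the paper: replicate a bounded $\G_T$-measurable payoff using completeness of the $(\prob,\filtg;S)$-market, observe that the bounded replicating wealth process is a true $(\qprob,\filt)$-martingale for any $\qprob\in\mathcal{M}$, and read off $\qprob=\qprob_0$ on $\G_T$. The only cosmetic differences are that the paper works directly with indicators $1_A$, $A\in\G_t$, rather than general bounded $\xi$, and deduces $\E[R_t\,|\,\G_t]=1$ in one line from $d\qprob/d\prob|_{\G_t}=Z^0_t\,\E[R_t\,|\,\G_t]$, whereas you first establish $\qprob=\qprob_0$ on $\G_T$ and then derive the conditional identity via the tower property; both routes are equivalent and your explicit justification of the uniform bound $|X^\Delta_t|\le\|\xi\|_\infty$ via $X^\Delta_t=\condespalt{\qprob_0}{\xi}{\G_t}$ is a nice touch.
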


\begin{proof}
This fact was proved in essentially the same setting in \cite[Lemma 4.3]{MR2011941}. For the reader's convenience we prove this as well. We follow very closely the proof  of \cite[Lemma A.2]{Christensen11072014} which considered the case of Brownian filtration. Let $A\in\G_t$. By the completeness of the $(\prob,\filtg;S)$-market, for some unique value $x$ there exists a $\filtg$-predictable, $(\prob,\filtg;S)$-integrable strategy $\Delta$ such that $1_A = x + \int_0^T \Delta_tdS_t = X^\Delta_T$. Furthermore, $X^{\Delta}$ is a bounded $(\qprob_0,\filtg)$-martingale, where the boundedness follows as $|X^\Delta_T| \leq 1$.  Now, as $\qprob\in\mathcal{M}$ and $\Delta$ is $(\prob,\filt;S)$-integrable and $X^{\Delta}$ is bounded, it holds that $X^{\Delta}$ is a $(\qprob,\filt)$-local-martingale and hence martingale \cite[Corollary 7.3.8]{MR2200584}.  Thus, we have that
\begin{equation*}
\qprob\bra{A} = \espalt{\qprob}{X^{\Delta}_T} = x = \qprob_0\bra{A};\qquad A\in \G_t,
\end{equation*}
and hence $d\qprob/d\prob\big|_{\G_t} = Z^0_t$, which yields the result as $d\qprob/d\prob\big|_{\G_t} = Z^0_t\condespalt{}{R_t}{\G_t}$.

\end{proof}


\begin{lemma}\label{lem: mart_meas_structure_2}
Let $R_T$ be $\Hh_T$ measurable, strictly positive, and such that $\espalt{}{R_T} = 1$. Then for $\qprob$ defined on $\F_{T}$ by
\begin{equation*}
\frac{d\qprob}{d\prob} = Z^0_T R_T
\end{equation*}
it holds that $\qprob\in\mathcal{M}$.
\end{lemma}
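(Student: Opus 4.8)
The plan is to verify directly that $\qprob$ defined by $d\qprob/d\prob = Z^0_T R_T$ belongs to $\mathcal{M}$, i.e. that $\qprob\sim\prob$ on $\F_T$ and that $S$ is a $(\qprob,\filtg)$-local martingale (hence also a $(\qprob,\filt)$-local martingale, since by the consequences of Assumptions \ref{ass: filt} and \ref{ass: asset_mkt} recalled in Section \ref{SS:opt_invest}, $(\prob,\filtg)$-martingales are $(\prob,\filt)$-martingales and $\filtg$-integrands coincide with $\filt$-integrands for $S$). Equivalence of $\qprob$ and $\prob$ is immediate: $Z^0_T > 0$ $\prob$-a.s.\ because $\qprob_0\sim\prob$ on $\G_T$, and $R_T > 0$ $\prob$-a.s.\ by hypothesis, so $d\qprob/d\prob = Z^0_T R_T > 0$ $\prob$-a.s.; and $\espalt{}{Z^0_T R_T} = \espalt{}{Z^0_T\condespalt{}{R_T}{\G_T}} = \espalt{}{Z^0_T} = 1$ using that $R_T$ is $\Hh_T$-measurable, $Z^0_T$ is $\G_T$-measurable, and $\G_T,\Hh_T$ are $\prob$-independent (Assumption \ref{ass: filt}), so $\condespalt{}{R_T}{\G_T} = \espalt{}{R_T} = 1$. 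Thus $\qprob$ is a well-defined probability measure equivalent to $\prob$.

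Next I would identify the density process. Set $R_t \triangleq \condespalt{}{R_T}{\F_t}$, the $(\prob,\filt)$-martingale generated by $R_T$. By the independence of $\filtg$ and $\filth$ and the fact that $R_T$ is $\Hh_T$-measurable, $R_t$ is in fact $\Hh_t$-measurable and equals $\condespalt{}{R_T}{\Hh_t}$ (this uses Assumption \ref{ass: filt}: conditioning on $\F_t = \G_t\vee\Hh_t$ an $\Hh_T$-measurable variable collapses to conditioning on $\Hh_t$). Then $Z^{\qprob}_t \triangleq \condespalt{}{Z^0_T R_T}{\F_t}$; since $Z^0$ is a $(\prob,\filtg)$-martingale (hence $(\prob,\filt)$-martingale) and $\Hh$-measurable quantities are $\filtg$-conditionally deterministic, a short computation using independence gives $Z^{\qprob}_t = Z^0_t R_t$. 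Concretely, for $A\in\F_t$ one checks $\espalt{}{Z^0_T R_T 1_A} = \espalt{}{Z^0_t R_t 1_A}$ by writing $1_A$ in terms of the product structure, or more cleanly by noting $Z^0_T R_T = Z^0_T R_t + Z^0_T(R_T - R_t)$ and using that $Z^0$ is an $\F$-martingale for the first term and that $Z^0_T$ times the $\Hh$-martingale increment $R_T - R_t$ has zero $\F_t$-conditional expectation by independence for the second.

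Finally, to show $S$ is a $(\qprob,\filtg)$-local martingale: $S$ is $\filtg$-adapted, and on $\G_T$ we have $d\qprob/d\prob\big|_{\G_T} = \condespalt{}{Z^0_T R_T}{\G_T} = Z^0_T\espalt{}{R_T} = Z^0_T = d\qprob_0/d\prob\big|_{\G_T}$, so $\qprob = \qprob_0$ on $\G_T$. Since by Assumption \ref{ass: asset_mkt} $S$ is a $(\qprob_0,\filtg)$-local martingale, it is a $(\qprob,\filtg)$-local martingale as well (localization along a $\filtg$-stopping-time sequence, with the defining martingale property for each stopped process depending only on the restriction of the measure to $\G_T$). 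Hence $\qprob\in\mathcal{M}$. The one point requiring a little care — and the closest thing to an obstacle — is making the identification $Z^\qprob_t = Z^0_t R_t$ and the passage from "$(\qprob_0,\filtg)$-local martingale" to "$(\qprob,\filtg)$-local martingale" fully rigorous; both reduce to the independence of $\filtg$ and $\filth$ together with the fact that $\qprob$ and $\qprob_0$ agree on $\G_T$, so neither is genuinely difficult, but the bookkeeping with the two filtrations should be written carefully.
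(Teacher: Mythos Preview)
Your approach is essentially correct and close in spirit to the paper's, with one small slip in the justification. The paper proceeds directly: after identifying $Z^\qprob_u = Z^0_u R_u$ (with $R_u = \condespalt{}{R_T}{\Hh_u}$), it takes a $\filtg$-reducing sequence $\{\tau_m\}$ making each $S^{\tau_m}$ a bounded $(\qprob_0,\filtg)$-martingale, fixes $A_s\in\G_s$, $B_s\in\Hh_s$, and checks
\[
\espalt{}{1_{A_s}1_{B_s}S_{\tau_m\wedge t}Z^0_tR_t}
=\espalt{}{1_{A_s}S_{\tau_m\wedge t}Z^0_t}\,\espalt{}{1_{B_s}R_t}
=\espalt{}{1_{A_s}S_{\tau_m\wedge s}Z^0_s}\,\espalt{}{1_{B_s}R_s},
\]
using $\prob$-independence together with the $(\qprob_0,\filtg)$- and $(\prob,\filth)$-martingale properties of $S^{\tau_m}Z^0$ and $R$ separately. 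This verifies the $(\qprob,\filt)$-martingale property on generating rectangles, hence on $\F_s$.

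Your route---show $\qprob=\qprob_0$ on $\G_T$, deduce that $S$ is a $(\qprob,\filtg)$-local martingale, then lift to $\filt$---also works, but the parenthetical justification you give for the lifting step is not quite right: the statement recalled in Section~\ref{SS:opt_invest} is that $(\prob,\filtg)$-martingales are $(\prob,\filt)$-martingales, whereas you need the analogue under $\qprob$. That analogue does hold, because $\filtg$ and $\filth$ remain $\qprob$-independent (the density $Z^0_TR_T$ factors as a $\G_T$-measurable part times an $\Hh_T$-measurable part), but you should say this explicitly rather than invoking the $\prob$-statement. Once that is patched, your argument and the paper's are two organizations of the same computation; the paper's direct check on rectangles is slightly cleaner in that it bypasses the intermediate $(\qprob,\filtg)$ step altogether.
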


\begin{proof}

Take a sequence of $\filtg$ stopping times $\cbra{\tau_m}_{m\in\mathbb{N}}$ such that $S^m_\cdot = S_{\tau_m\wedge \cdot}$ is a bounded $(\qprob_0,\filtg)$-martingale. For $u\leq T$ define the $(\prob,\filth)$-martingale $R_u = \condespalt{}{R_T}{\Hh_u}$.  It is clear that $d\qprob/d\prob\big|_{\F_u} = Z^0_u R_u$. Now, fix $0\leq s\leq t\leq T$ and let $A_s\in \G_s$, $B_s\in \Hh_s$. We thus have that
\begin{equation*}
\begin{split}
\espalt{}{1_{A_s} 1_{B_s} S_{\tau_m\wedge t}\frac{d\qprob}{d\prob}\big|_{\F_t}} &= \espalt{}{1_{A_s} 1_{B_s} S_{\tau_m\wedge t}Z^0_t R_t}= \espalt{}{1_{A_s}S_{\tau_m\wedge t}Z^0_t}\espalt{}{1_{B_s} R_t};\\
&= \espalt{}{1_{A_s}S_{\tau_m\wedge s}Z^0_s}\espalt{}{1_{B_s} R_s}=\espalt{}{1_{A_s} 1_{B_s} S_{\tau_m\wedge s}\frac{d\qprob}{d\prob}\big|_{\F_s}}.
\end{split}
\end{equation*}
Thus, $S^m$ is a bounded $(\qprob,\filt)$-martingale, proving the result, as the $\cbra{\tau_m}_{m\in\mathbb{N}}$ are $\filt$ stopping times as well.
\end{proof}


Given Lemmas \ref{lem: mart_meas_structure} and \ref{lem: mart_meas_structure_2}, we now prove Proposition \ref{prop: no_claim}

\begin{proof}[Proof of Proposition \ref{prop: no_claim}]

First, consider the optimal investment problem in \eqref{eq: opt_invest_q} in the $(\prob,\filtg; S)$-market: i.e. when the allowable trading strategies are those $\Delta$ which are $\filtg$-predictable, $(\prob,\filtg;S)$-integrable and such that the resultant wealth process $X^{\Delta}$ is a $(\qprob_0,\filtg)$-super-martingale (recall that $\qprob_0$ is the unique equivalent local martingale measure  on $\G_T$ with finite relative entropy). Here, under Assumption \ref{ass: asset_mkt}, as $S$ is $\filtg$ locally bounded, it follows from \cite[Corollary 2.1]{MR1743972}, \cite[Proposition 3.2]{MR1920099} that \eqref{eq: qprob_0_ident} holds for some $(\prob;\filtg;S)$-integrable trading strategy $\Psi$ such that $X^{\Psi}$ is a $(\qprob_0,\filtg)$-martingale. Therefore,
\begin{equation}\label{eq: rel_ent_fact_0}
\infty > \relent{\qprob_0}{\prob\big|_{\G_T}} = \espalt{\qprob_0}{-aX^{\Psi}_T - \log\left(\espalt{}{e^{-aX^\Psi_T}}\right)} = -\log\left(\espalt{}{e^{-aX^{\Psi}_T}}\right).
\end{equation}
This in turn implies
\begin{equation}\label{eq: dual_eqn}
\espalt{}{U\left(X^{\Psi}_T\right)} = -\frac{1}{a}\espalt{}{e^{-aX^{\Psi}_T}} = -\frac{1}{a}e^{-\relent{\qprob_0}{\prob\big|_{\G_T}}},
\end{equation}
and hence from the well-known duality results on the optimal investment problem, $\Psi$ is the optimal trading strategy in the $(\prob,\filtg;S)$-market. We now show $\Psi$ is optimal among the larger class of trading strategies $\mathcal{A}$ in the $(\prob,\filt;S)$-market. Recall the extension of $\qprob_0$ to $\F_T$ in \eqref{eq: qprob_0_extend}. Assumptions \ref{ass: filt}, \ref{ass: asset_mkt} and Lemma \ref{lem: mart_meas_structure_2} imply $\qprob_0\in\mathcal{M}$, and for any $\qprob\in\mathcal{M}$, using $R$ from Lemma \ref{lem: mart_meas_structure}:
\begin{equation*}
\begin{split}
\espalt{}{\frac{d\qprob}{d\prob}\log\left(\frac{d\qprob}{d\prob}\right)} & = \espalt{}{Z^0_TR_T\left(\log\left(Z^0_T\right) + \log(R_T)\right)};\\
&=\espalt{}{Z^0_T\log\left(Z^0_T\right)} + \espalt{}{Z^0_T\condespalt{}{R_T\log(R_T)}{\G_T}}\geq \espalt{}{\frac{d\qprob_0}{d\prob}\log\left(\frac{d\qprob_0}{d\prob}\right)},
\end{split}
\end{equation*}
where the second equality and third inequality follow from Lemma \ref{lem: mart_meas_structure} and the conditional Jensen inequality. Thus, $\qprob_0$ is the $(\prob,\filt)$-minimal entropy measure, and as \eqref{eq: dual_eqn} holds, $\Psi$ will be the optimal trading strategy once it is shown that $\Psi\in\mathcal{A}$: i.e. that $X^{\Psi}$ is a $(\qprob,\filt)$-super-martingale for all $\qprob\in\tm$.  To this end, we first show that $X^{\Psi}$ is a $(\qprob, \filt)$-local martingale for any $\qprob\in\mathcal{M}$.  Indeed, as $X^{\Psi}$ is a $(\qprob_0,\filtg)$-martingale, it is a $(\qprob_0,\filtg)$-special semi-martingale and \cite[Proposition 4.23]{MR959133} implies (recall $x=X^{\Psi}_0 = 0$) that $Y_t = \sup_{s\leq t}|X^{\Psi}_s|, t\leq T$ is $(\qprob_0,\filtg)$ locally integrable.  Thus, let $\cbra{\tau_n}_{n\in\mathbb{N}}$ such that $\tau_n\uparrow\infty$ and such that $\espalt{\qprob_0}{\sup_{s\leq T\wedge\tau_n}|X^{\Psi}_s|}<\infty$.  Now, let $\qprob\in\mathcal{M}$. By Lemma \ref{lem: mart_meas_structure} and the fact that the $\cbra{\tau_n}_{n\in\mathbb{N}}$ are $\filtg$ stopping times, we have conditioning upon $\G_T$ that
\begin{equation*}
\begin{split}
\espalt{\qprob}{\sup_{s\leq T\wedge\tau_n}|X^{\Psi}_s|} &= \espalt{}{Z^0_TR_T\sup_{s\leq T\wedge\tau_n}|X^{\Psi}_s|} =\espalt{\qprob_0}{\sup_{s\leq T\wedge\tau_n}|X^{\Psi}_s|}<\infty.
\end{split}
\end{equation*}
Thus, $(X^{\Psi})^{-}$ is $(\qprob,\filtg)$ (resp. $(\qprob,\filt)$)-locally integrable, and as $S$ is a $(\qprob,\filt)$-local martingale by assumption, \cite[Corollary 7.3.8]{MR2200584} yields that $X^{\Psi}$ is a $(\qprob,\filt)$ local martingale.  To show that $X^{\Psi}$ is a $(\qprob,\filt)$-super-martingale for all $\qprob\in\tm$ we use the results of \cite{MR1891731}. To align with the notation therein, set
\begin{equation}\label{eq: D_underlineZ_def}
\begin{split}
\mathcal{D} &= \cbra{Z^{\qprob}: \qprob\in\tm};\qquad \mathcal{T}_T = \cbra{\tau : \filt-\textrm{stopping time s.t.}\ \tau\leq T};\\
\bar{Z}_t & = \exp\left(\condespalt{\qprob_0}{\log\left(Z^0_T\right)}{\F_t}\right);\quad t\leq T.
\end{split}
\end{equation}
We first claim that $X^{\Psi}$ is a $(\qprob_0,\filt)$-martingale.  Indeed, fix $0\leq s\leq t\leq T$ and let $A_s\in\G_s, B_s\in \Hh_s$.  We have
\begin{equation*}
\begin{split}
\espalt{}{1_{A_s}1_{B_s}X^{\Psi}_t Z^0_t}&= \prob\bra{B_s}\espalt{}{1_{A_s}X^{\Psi}_tZ^0_t}=\prob\bra{B_s}\espalt{}{1_{A_s}X^{\Psi}_sZ^0_s} =\espalt{}{1_{B_s}1_{A_s}X^{\Psi}_sZ^0_s},
\end{split}
\end{equation*}
where the first and third equalities follow by the $\prob$-independence of $\filtg$ and $\filth$, and the second equality follows by the fact that $X^{\Psi}$ is a $(\qprob_0,\filtg)$-martingale. From \eqref{eq: D_underlineZ_def} and \eqref{eq: qprob_0_ident} we see that
\begin{equation*}
\begin{split}
\log\left(\bar{Z}_t\right) &= \condespalt{\qprob_0}{\log\left(Z^{0}_T\right)}{\F_t} = \condespalt{\qprob_0}{-aX^{\psi}_T - \log\left(\espalt{}{e^{-aX^{\psi}_T}}\right)}{\F_t};\\
&=-a X^{\psi}_t - \log\left(\espalt{}{e^{-aX^{\psi}_T}}\right);\qquad t\leq T.
\end{split}
\end{equation*}
Recall (\cite[Section 4]{MR1891731}) that we say $\mathcal{D}$ is ``stable under concatenation'' for $\filt$ if for all $\tau\in\mathcal{T}_T$, we have that $Z^{\qprob_1}, Z^{\qprob_2}\in\mathcal{D}$ implies that
\begin{equation*}
\tilde{Z} = Z^{\qprob_1}I_{[0,\tau)} + (Z^{\qprob_1}_\tau/Z^{\qprob_2}_\tau)Z^{\qprob_2}1_{[\tau,T]} \in \mathcal{D}.
\end{equation*}
It is clear that $\espalt{}{\tilde{Z}_T\log\left(\tilde{Z}_T\right)} < \infty$, $\tilde{Z}_T > 0$ and the optional sampling theorem implies that $\espalt{}{\tilde{Z}_T} = 1$. Lastly, the optional sampling theorem again implies, as $S$ is locally bounded (as noted in \cite[pp. 109]{MR1891730}), that $\tilde{Z}\in\tm$. Thus, as $\tilde{\mathcal{M}}$ is stable under concatenation, \cite[Lemma 4.2]{MR1891731} shows if $\qprob\in\tm$ then $\left(\log(\bar{Z}_{\tau\wedge T})\right)_{\tau\in\mathcal{T}_T}$ is $\qprob$ uniformly integrable.  Thus, the family $\left(X^{\Psi}_{\tau\wedge T}\right)_{\tau\in\mathcal{T}_T}$ is $\qprob$ uniformly integrable and hence $X^{\Psi}$ is a $\qprob$ uniformly integrable, $(\qprob,\filt)$ martingale, hence supermartingale, as it is of class DL.
\end{proof}


\begin{lemma}\label{lem: opt_q_ok} For $\hat{\qprob}$ defined as in \eqref{eq: opt_meas_no_n} it follows that $\hat{\qprob}\in\tm$.
\end{lemma}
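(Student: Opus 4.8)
The plan is to put $d\hat{\qprob}/d\prob$ into the exact form handled by Lemma \ref{lem: mart_meas_structure_2}, deduce $\hat{\qprob}\in\mathcal{M}$, and then verify the relative entropy bound by a direct computation that exploits the $\prob$-independence of $\filtg$ and $\filth$. First I would note that since $\Lambda(-qa)<\infty$ by Assumption \ref{ass: abstract_int}, the random variable $R_T \triangleq e^{-qaY}/\espalt{}{e^{-qaY}}$ is well defined; it is $\Hh_T$-measurable because $Y$ is $\Hh_T$-measurable, it is strictly positive, and $\espalt{}{R_T}=1$. Writing \eqref{eq: opt_meas_no_n} as $d\hat{\qprob}/d\prob = Z^0_T R_T$ with $Z^0_T$ as in \eqref{eq: qprob_0_density}, Lemma \ref{lem: mart_meas_structure_2} immediately yields $\hat{\qprob}\in\mathcal{M}$.

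It then remains to check $\relent{\hat{\qprob}}{\prob}<\infty$. I would expand
\[
\frac{d\hat{\qprob}}{d\prob}\log\left(\frac{d\hat{\qprob}}{d\prob}\right) = Z^0_T R_T\log\left(Z^0_T\right) + Z^0_T R_T\log\left(R_T\right),
\]
and take $\prob$-expectations. Since $Z^0_T$ is $\G_T$-measurable, $R_T$ is $\Hh_T$-measurable, $\G_T$ and $\Hh_T$ are $\prob$-independent (Assumption \ref{ass: filt}), and $\espalt{}{Z^0_T}=\espalt{}{R_T}=1$, the expectation factors as
\[
\relent{\hat{\qprob}}{\prob} = \espalt{}{Z^0_T\log\left(Z^0_T\right)} + \espalt{}{R_T\log\left(R_T\right)} = \relent{\qprob_0}{\prob\big|_{\G_T}} + \espalt{}{R_T\log\left(R_T\right)}.
\]
The first term is finite by Assumption \ref{ass: asset_mkt}. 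For the second, using $\log R_T = -qaY - \Lambda(-qa)$ gives
\[
\espalt{}{R_T\log\left(R_T\right)} = -qa\,\frac{\espalt{}{Ye^{-qaY}}}{\espalt{}{e^{-qaY}}} - \Lambda(-qa),
\]
which is finite provided $\espalt{}{|Y|e^{-qaY}}<\infty$ (the denominator is $e^{\Lambda(-qa)}\in(0,\infty)$). The latter follows from the elementary bound $|y|\leq e^{\delta y} + e^{-\delta y}$ for any fixed $\delta>0$, which gives $|Y|e^{-qaY}\leq e^{(\delta-qa)Y} + e^{(-\delta-qa)Y}$, whose expectation is $e^{\Lambda(\delta-qa)} + e^{\Lambda(-\delta-qa)}<\infty$ by Assumption \ref{ass: abstract_int}. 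Hence $\relent{\hat{\qprob}}{\prob}<\infty$, so $\hat{\qprob}\in\tm$.

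The argument is essentially routine; the only step requiring any care is the factorization of $\espalt{}{Z^0_T R_T\log(Z^0_T R_T)}$ into the sum of the two entropy-type terms, which rests precisely on the $\prob$-independence of $\filtg$ and $\filth$ together with the fact that $R_T$ inherits $\Hh_T$-measurability from $Y$ — exactly the structure imposed by Assumptions \ref{ass: filt} and \ref{ass: claim_decomp}. No large-deviations input is needed, and the integrability of $|Y|e^{-qaY}$ is the only place where the full strength of Assumption \ref{ass: abstract_int} (finiteness of $\Lambda$ on all of $\R$) is used.
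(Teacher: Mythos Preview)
Your proof is correct and follows essentially the same route as the paper: invoke Lemma \ref{lem: mart_meas_structure_2} to get $\hat{\qprob}\in\mathcal{M}$, then compute the relative entropy directly by factoring via the $\prob$-independence of $Z^0_T$ and $R_T$. One small slip: the inequality $|y|\leq e^{\delta y}+e^{-\delta y}$ is false for small $\delta$ (e.g.\ $\delta=0.01$, $y=100$); you need either $\delta\geq 1$ or the version $|y|\leq \delta^{-1}(e^{\delta y}+e^{-\delta y})$, which is harmless for the conclusion.
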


\begin{proof}

That $\hat{\qprob}\in\mathcal{M}$ is an immediate consequence of Lemma \ref{lem: mart_meas_structure_2}. It thus suffices to show that $\relent{\hat{\qprob}}{\prob} < \infty$. To this end, using the independence of $Z^0$ and $Y$:
\begin{equation*}
\begin{split}
\relent{\hat{\qprob}}{\prob} &= \espalt{}{Z^0_T\frac{e^{-qaY}}{\espalt{}{e^{-qaY}}}\left(\log(Z^0_T) - qa Y - \log\left(\espalt{}{e^{-qaY}}\right)\right)};\\
&=\relent{\qprob_0}{\prob\big|_{\G_T}} - qa\frac{\espalt{}{Y e^{-qaY}}}{\espalt{}{e^{-qaY}}} - \log\left(\espalt{}{e^{-qa Y}}\right)<\infty,
\end{split}
\end{equation*}
where the last inequality follows as $\espalt{}{e^{\lambda Y}}<\infty$ for all $\lambda\in\reals$.
\end{proof}


\begin{lemma}\label{lem: opt_delta_ok}
For the trading strategy $\hat{\Delta} = -q\Delta_1 + \Psi$ where $\Delta_1$ is the replicating strategy for $D$ and $\Psi$ is from Proposition \ref{prop: no_claim}, it follows that $\hat{\Delta}\in\mathcal{A}$.
\end{lemma}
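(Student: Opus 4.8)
The goal is to verify the three requirements in the definition of $\mathcal{A}$ for $\hat{\Delta} = -q\Delta_1 + \Psi$: that it is $\filt$-predictable, that it is $(\prob,\filt;S)$-integrable, and that the resulting wealth process is a $(\qprob,\filt)$-supermartingale for every $\qprob\in\tm$. For the first two, I would note that $\Delta_1$, being the replicating strategy for the $\G_T$-measurable claim $D$ (which exists by Assumption \ref{ass: asset_mkt} since $\espalt{\qprob_0}{|D|^{1+\eps}}<\infty$), is $\filtg$-predictable and $(\prob,\filtg;S)$-integrable, and $\Psi$ enjoys the same properties by Proposition \ref{prop: no_claim}; hence so does the linear combination $\hat{\Delta}$. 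By Assumption \ref{ass: filt} and \cite[Proposition 8]{MR580121} (recorded at the start of Section \ref{SS:opt_invest}), $\hat{\Delta}$ is then $\filt$-predictable and $(\prob,\filt;S)$-integrable, and the stochastic integrals against $S$ taken in $\filtg$ and in $\filt$ coincide. Writing $X^{\Delta_1}_\cdot = d + \int_0^\cdot \Delta_1\, dS_u$ for the replicating wealth of $D$ (a $(\qprob_0,\filtg)$-martingale with $X^{\Delta_1}_T = D$, $\prob$-a.s.), linearity of the stochastic integral gives the decomposition $X^{\hat{\Delta}}_\cdot = -q\bigl(X^{\Delta_1}_\cdot - d\bigr) + X^{\Psi}_\cdot$.

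It therefore suffices to show that, for every $\qprob\in\tm$, both $X^{\Delta_1}$ and $X^{\Psi}$ are $(\qprob,\filt)$-martingales. For $X^{\Psi}$ this is the last assertion of Proposition \ref{prop: no_claim}. The content of the lemma is thus the replicating wealth $X^{\Delta_1}$, and this is where the strengthened integrability $\espalt{\qprob_0}{|D|^{1+\eps}} < \infty$ from Assumption \ref{ass: abstract_int} enters. I would proceed in three steps. First, since $X^{\Delta_1}$ is a $(\qprob_0,\filtg)$-martingale with terminal value in $L^{1+\eps}(\qprob_0)$, Doob's $L^{1+\eps}$-maximal inequality yields $\espalt{\qprob_0}{\sup_{s\le T}|X^{\Delta_1}_s|} < \infty$. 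Second, because $X^{\Delta_1}$ is $\filtg$-adapted (by the coincidence of integrals above), $\sup_{s\le T}|X^{\Delta_1}_s|$ is $\G_T$-measurable, so for $\qprob\in\tm$ with density $Z^{\qprob}_T = Z^0_T R_T$ as in Lemma \ref{lem: mart_meas_structure}, conditioning on $\G_T$ and using $\condespalt{}{R_T}{\G_T} = 1$ gives
\begin{equation*}
\espalt{\qprob}{\sup_{s\le T}|X^{\Delta_1}_s|} = \espalt{}{Z^0_T\,\condespalt{}{R_T}{\G_T}\,\sup_{s\le T}|X^{\Delta_1}_s|} = \espalt{\qprob_0}{\sup_{s\le T}|X^{\Delta_1}_s|} < \infty .
\end{equation*}
Third, since $(X^{\Delta_1})^-$ is then $\qprob$-integrable and $X^{\Delta_1}$ is a stochastic integral against the $(\qprob,\filt)$-local martingale $S$, \cite[Corollary 7.3.8]{MR2200584} shows $X^{\Delta_1}$ is a $(\qprob,\filt)$-local martingale; being dominated by the integrable variable $\sup_{s\le T}|X^{\Delta_1}_s|$, it is of class D, hence a uniformly integrable $(\qprob,\filt)$-martingale.

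Combining these facts, $X^{\hat{\Delta}} = -q\bigl(X^{\Delta_1}-d\bigr) + X^{\Psi}$ is a $(\qprob,\filt)$-martingale, in particular a supermartingale, for every $\qprob\in\tm$, which together with the first paragraph yields $\hat{\Delta}\in\mathcal{A}$. I expect the main obstacle to be the middle step: upgrading the martingale property of the replicating wealth under $\qprob_0$ to (at worst) a supermartingale property under an arbitrary $\qprob\in\tm$. A priori a stochastic integral against $S$ is only a $(\qprob,\filt)$-sigma-martingale, so one genuinely needs both the quantitative bound $\sup_{s\le T}|X^{\Delta_1}_s|\in L^1(\qprob)$ — which is exactly why $D$ is assumed to lie in $L^{1+\eps}(\qprob_0)$ rather than merely $L^1(\qprob_0)$ — and the structural identity $Z^{\qprob}_T = Z^0_T R_T$ with $\condespalt{}{R_T}{\G_T} = 1$ from Lemma \ref{lem: mart_meas_structure} in order to transfer that bound from $\qprob_0$ to $\qprob$.
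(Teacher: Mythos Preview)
Your proposal is correct and follows essentially the same approach as the paper's proof: decompose $X^{\hat{\Delta}} = -q(X^{\Delta_1}-d) + X^{\Psi}$, invoke Proposition \ref{prop: no_claim} for $X^{\Psi}$, and for $X^{\Delta_1}$ use Doob's $L^{1+\eps}$-maximal inequality together with the fact that $\qprob = \qprob_0$ on $\G_T$ (equivalently, your conditioning argument via Lemma \ref{lem: mart_meas_structure}) to get $\sup_{s\le T}|X^{\Delta_1}_s|\in L^1(\qprob)$, then apply \cite[Corollary 7.3.8]{MR2200584} and the class D property. The only cosmetic difference is that the paper phrases the transfer step directly as ``$\qprob=\qprob_0$ on $\G_T$'' rather than writing out the density decomposition, but the content is identical.
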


\begin{proof}

Let $\qprob\in\tm$.  Recall that Assumptions \ref{ass: filt}, \ref{ass: asset_mkt} imply that $\hat{\Delta}$ is $\filtg$- (hence $\filt$-) predictable and  both $(\prob,\filtg;S)$, $(\prob,\filt;S)$ integrable, and $X^{\Delta}$ coincides under both $\filtg,\filt$ (in fact this holds for any measure equivalent to $\prob$ on $\F_T$). We must show that
\begin{equation}\label{eq: hat_delta_x_0}
X^{\hat{\Delta}}_\cdot = \int_0^\cdot \hat{\Delta}_udS_u = -q\int_0^\cdot (\Delta_1)_udS_u + \int_0^\cdot\Psi_udS_u = -q\left(X^{\Delta_1}_\cdot -d\right) + X^{\Psi}_\cdot,
\end{equation}
is a $(\qprob,\filt)$-super-martingale.  From Proposition \ref{prop: no_claim} it holds that $X^{\Psi}$ is a $\qprob$ uniformly integrable $(\qprob,\filt)$-martingale.  Thus, it suffices to show that $X^{\Delta_1}$ is a $(\qprob,\filt)$-martingale. Now, that $X^{\Delta_1}$ is a $(\qprob_0,\filtg)$-martingale follows by Assumptions \ref{ass: asset_mkt} and \ref{ass: abstract_int}. Next, as $X^{\Delta_1}$ is $\filtg$-adapted and $\qprob=\qprob_0$ on $\G_T$ we have, using the cadlag property of $X^{\Delta_1}$ (see Remark \ref{rem: cadlag}), H\"{o}lder's inequality and Doob's maximal inequality:
\begin{equation*}
\begin{split}
\espalt{\qprob}{\sup_{t\leq T}|X^{\Delta_1}_t|} &= \espalt{\qprob_0}{\sup_{t\leq T}|X^{\Delta_1}_t|}=\espalt{\qprob_0}{\sup_{t\leq T}|\condespalt{\qprob_0}{D}{\G_t}|};\\
&\leq \espalt{\qprob_0}{\left(\sup_{t\leq T}|\condespalt{\qprob_0}{D}{\G_t}|\right)^{1+\eps}}^{\frac{1}{1+\eps}};\\
&\leq\left(\frac{1+\epsilon}{\epsilon}\right)\espalt{\qprob_0}{|D|^{1+\eps}}^{\frac{1}{1+\eps}} < \infty.
\end{split}
\end{equation*}
Thus, \cite[Corollary 7.3.8]{MR2200584} implies that $X^{\Delta_1}$ is a $(\qprob,\filt)$-local martingale. In fact, for any $\filt$-stopping time $\tau$ and $\lambda > 0$:
\begin{equation*}
\espalt{\qprob}{|X^{\Delta_1}_{t\wedge \tau}| 1_{|X^{\Delta_1}_{t\wedge\tau}|\geq \lambda}} \leq \espalt{\qprob}{\sup_{t\leq T}|X^{\Delta_1}_t|1_{\sup_{t\geq T}|X^{\Delta_1}_t|\geq \lambda}},
\end{equation*}
and hence  $X^{\Delta_1}$ is of class $(\qprob,\filt)$ D.L. and hence a $(\qprob,\filt)$-martingale.

\end{proof}


\begin{proof}[Proof of Proposition \ref{prop: opt_invest_no_n}]

From \eqref{eq: rel_ent_fact_0} in Proposition \ref{prop: no_claim} we see that
\begin{equation}\label{eq: rel_ent_calc}
\begin{split}
\relent{\qprob_0}{\prob\big|_{\G_T}} & = -\log\left(\espalt{}{e^{-aX^{\Psi}_T}}\right) = -\log(-au(0,0)).
\end{split}
\end{equation}
As $\hat{\Delta}$ is $\filtg$ predictable it follows that $X^{\hat{\Delta}}$ is $\filtg$-adapted and hence $X^{\hat{\Delta}}$ is independent of $\filth$.  Furthermore, Lemma \ref{lem: opt_delta_ok} shows that $\hat{\Delta}\in\mathcal{A}$ and in fact $X^{\hat{\Delta}}$ is a $(\qprob,\filt)$-martingale for all $\qprob\in\tm$. Additionally, in view of \eqref{eq: hat_delta_x_0} we have
\begin{equation*}
\begin{split}
-a(X^{\hat{\Delta}}_T + qB) &= -aX^{\psi}_T + qa X^{\Delta_1}_T - qad - qaD - qa Y = -aX^{\psi}_T - qad - qa Y.
\end{split}
\end{equation*}
Thus
\begin{equation}
\begin{split}
-\frac{1}{a}\espalt{}{e^{-a(X^{\hat{\Delta}}_T + qB)}} &= -\frac{1}{a}e^{-qad} \espalt{}{e^{-a X^{\psi}_T - qaY}}=-\frac{1}{a}e^{-qad}\espalt{}{e^{-aX^{\psi}_T}}\espalt{}{e^{-qaY}};\\
&=u(0,0) e^{-qa d}\espalt{}{e^{-qa Y}}.\label{Eq:OptimalStrategy1}
\end{split}
\end{equation}
Now, define the probability measure $\hat{\qprob}$ on $\F_T$ via \eqref{eq: opt_meas_no_n}. Lemma \ref{lem: opt_q_ok} shows that $\qprob\in\tm$.  From \eqref{eq: opt_meas_no_n} we have
\begin{equation*}
\begin{split}
B+\frac{1}{qa}\log\left(Z^{\hat{\qprob}}_T\right) &= D + Y + \frac{1}{qa}\log\left(Z^0_T\right) - Y - \frac{1}{qa}\log\left(\espalt{}{e^{qa Y}}\right);\\
&= X^{\Delta_1}_T + \frac{1}{qa}\log\left(Z^0_T\right) - \frac{1}{qa}\log\left(\espalt{}{e^{qa Y}}\right).\\
\end{split}
\end{equation*}
As $\espalt{}{Z^{\hat{\qprob}}_T} = 1$:
\begin{equation*}
\begin{split}
\espalt{}{Z^{\hat{\qprob}}_T\left(B+\frac{1}{qa}\log\left(Z^{\hat{\qprob}}_T\right)\right)} &= \espalt{}{X^{\Delta_1}_TZ^0_T\frac{e^{-qaY}}{\espalt{}{e^{-qaY}}}} + \frac{1}{qa}\espalt{}{\log(Z^0_T)Z^0_T\frac{e^{-qaY}}{\espalt{}{e^{-qaY}}}}\\
&\qquad\qquad - \frac{1}{qa}\log\left(\espalt{}{e^{qa Y}}\right);\\
&=d + \frac{1}{qa}\relent{\qprob_0}{\prob\big|_{\G_T}} - \frac{1}{qa}\log\left(\espalt{}{e^{qa Y}}\right);\\
&=d - \frac{1}{qa}\log(-au(0,0)) - \frac{1}{qa}\log\left(\espalt{}{e^{qa Y}}\right).\\
\end{split}
\end{equation*}
Above, the second equality follows by the independence of $Y$ and $Z^0_TX^{\Delta_1}_T$, the fact that $X^{\Delta_{1}}$ is a $(\qprob_0,\filtg)$ martingale starting at $d$, and the definition of $Z^0_T$. The last equality follows from \eqref{eq: rel_ent_calc}.  The latter and (\ref{Eq:OptimalStrategy1}), give us
\begin{equation*}
\begin{split}
-\frac{1}{a}e^{-qa\espalt{}{Z^{\hat{\qprob}}_T\left(B+\frac{1}{qa}\log\left(Z^{\hat{\qprob}}_T\right)\right)}} &= -\frac{1}{a}e^{-qad + \log(-au(0,0)) + \log\left(\espalt{}{e^{qaY}}\right)}= u(0,0)e^{-qad}\espalt{}{e^{-qaY}};\\
&=-\frac{1}{a}\espalt{}{e^{-a(X^{\hat{\Delta}}_T + qB)}}.
\end{split}
\end{equation*}
Thus, from the standard duality results for exponential utility it follows that \eqref{eq: u_no_n_vf} holds, that $\hat{\Delta}$ is the optimal strategy, and that $\hat{\qprob}$ is the optimal local martingale measure.  With this identification of $u(0,q)$, the indifference price $p(q)$ from \eqref{eq: indif_price_no_n} is immediate.
\end{proof}


\begin{proof}[Proof of Lemma \ref{lem: arb_free_range_no_n}]

Let $\qprob\in\mathcal{M}$. From Lemma \ref{lem: mart_meas_structure} it follows that $Z^{\qprob}_T = Z^0_TR_T$ where $\condespalt{}{R_T}{\G_T} = 1$. As $D = X^{\Delta_1}_T$ almost surely and  $X^{\Delta_1}_T$ is a $(\qprob_0,\filtg)$-martingale with initial value $d$ it follows that
\begin{equation*}
\espalt{\qprob}{B} = \espalt{}{Z^0_TR_T\left(X^{\Delta_1}_T+Y\right)} = d + \espalt{}{Z^0_TR_T Y},
\end{equation*}
where the second equality follows by first conditioning upon $\G_T$.  From the above, it is clear that $\inf_{\qprob\in\mathcal{M}}\espalt{\qprob}{B} \geq d +\essinf{\prob}{Y}$.  As for the reverse direction, denote by $\textbf{M}_T$ the class of strictly positive, $\Hh_T$-measurable random variables $R_T$ such that $\espalt{}{R_T} = 1$. For any $R_T\in \textbf{M}_T$, Lemma \ref{lem: mart_meas_structure_2} shows that defining $\qprob$ via $d\qprob/d\prob = Z^0_TR_T$ it follows that $\qprob\in\mathcal{M}$. Furthermore, using the independence of $\filtg$ and $\filth$ it follows that $\espalt{\qprob}{B} = d + \espalt{}{R_TY}$, so that
\begin{equation}\label{eq: M_bfM}
\inf_{\qprob\in\mathcal{M}}\espalt{\qprob}{B} \leq d + \inf_{R_T\in\textbf{M}_T}\espalt{}{R_T Y}.
\end{equation}
Now, let $m$ be such that $\prob\bra{Y < m} > 0$. Set $A_{m} = \cbra{Y < m}\in \Hh_T$ and, for $0<\delta<1$ set
\begin{equation*}
R^{m,\delta}_T = \frac{(1-\delta)1_{A_{m}} + \delta 1_{A_{m}^c}}{(1-\delta)\prob\bra{A_{m}} + \delta\prob\bra{A_{m}^c}}.
\end{equation*}
Clearly, $R^{m,\delta}_T\in\textbf{M}_T$. Furthermore,
\begin{equation*}
\begin{split}
\inf_{R_T\in\textbf{M}_T}\espalt{}{R_TY}\leq\espalt{}{R^{m,\delta}_T Y} &= \frac{(1-\delta)\espalt{}{Y1_{Y < m}} + \delta\espalt{}{Y1_{Y\geq m}}}{(1-\delta)\prob\bra{Y<m} + \delta\prob\bra{Y\geq m}}\leq \frac{m(1-\delta)\prob\bra{Y < m} + \delta\espalt{}{Y1_{Y\geq m}}}{(1-\delta)\prob\bra{Y<m} + \delta\prob\bra{Y\geq m}}.
\end{split}
\end{equation*}
Assumption \ref{ass: abstract_int} implies $\espalt{}{|Y|} < \infty$ and in particular $\espalt{}{|R^{m,\delta}_T Y|} < \infty$. Thus, taking $\delta \downarrow 0$ gives
\begin{equation*}
\inf_{R_T\in\textbf{M}_T}\espalt{}{R_TY} \leq m.
\end{equation*}
Taking $m\downarrow\essinf{\prob}{Y}$ gives $\inf_{R_T\in\textbf{M}_T}\espalt{}{R_TY} \leq \essinf{\prob}{Y}$, which in view of \eqref{eq: M_bfM} yields
\begin{equation*}
\underline{b} = \inf_{\qprob\in\mathcal{M}}\espalt{\qprob}{B} = d + \essinf{\prob}{Y}.
\end{equation*}
A similar calculation for the upper bound shows that $\bar{b} = d + \esssup{\prob}{Y}$, finishing the proof.

\end{proof}


\begin{proof}[Proof of Proposition \ref{prop: opt_qn_no_n}]
It is convenient to set $q = -\lambda/a$ so that \eqref{eq: opt_q_var_prob_no_n} reads
\begin{equation*}
\frac{1}{a}\inf_{\lambda\in\R}\left(\Lambda(\lambda) - \lambda(\tilde{p} - d)\right).
\end{equation*}
Set $f(\lambda) =\Lambda(\lambda) - \lambda(\tilde{p}-d)$. The strict convexity of $\Lambda(\lambda)$ implies that $f(\lambda)$ is strictly convex.  For $\lambda\neq 0$ we have $f(\lambda)/\lambda = \Lambda(\lambda)/\lambda - (\tilde{p}-d)$. Lemma \ref{lem: arb_free_range_no_n} and parts $(1),(2)$ of Lemma \ref{lem: Xi_prop_no_n} below give, as $\tilde{p}\in (\underline{b},\bar{b})$, the existence of an $\eps > 0$ so that $\liminf_{|\lambda|\uparrow\infty}f(\lambda)/|\lambda| \geq \eps$. Therefore, $f$ is strictly convex and coercive so there exists a unique minimizer $\hat{\lambda}$ for $f$ on $\R$.  Part $(3)$ of Lemma \ref{lem: Xi_prop_no_n} below ensures that $\dot{\Lambda}(\lambda)$ exists and is finite for all $\lambda\in\R$, and hence by the standard results of minimizer's of differentiable functions it follows that $\hat{\lambda}$ must satisfy the first order conditions given in \eqref{eq: hat_qn_foc_no_n}. To see this, note that for all $\lambda\in\R$ we have $\Lambda(\lambda)-\Lambda(\hat{\lambda}) \geq (\lambda-\hat{\lambda})(\tilde{p}-d)$. Now, assume $\lambda > \hat{\lambda}$. Then $\tilde{p}-d \leq 1/(\lambda-\hat{\lambda})\int_{\hat{\lambda}}^\lambda \dot{\Lambda}(\tau)d\tau.$ Taking $\lambda\downarrow\hat{\lambda}$ and using the smoothness of $\Lambda$, which is ensured by Assumption \ref{ass: abstract_int} $\tilde{p}-d \leq \dot{\Lambda}(\hat{\lambda}).$ A similar calculation with $\hat{\lambda} > \lambda$ gives the opposite inequality, finishing the proof.

\end{proof}


\section{Proofs from Section \ref{S:LM_Ex}}\label{S: LM_Ex_Proof}

\begin{proof}[Proof of Lemma \ref{lem: h_well_posed}]

It is first shown that $\lim_{N\uparrow\infty} \sum_{i=1}^N \espalt{}{B_i}$ exists and is finite in magnitude.  Indeed, by the convexity of each $\Gamma_i$ we have for any $\lambda > 0$ that $-(1/\lambda)\Gamma_i(-\lambda)\leq \dot{\Gamma}_i(0) = \espalt{}{B_i} \leq (1/\lambda)\Gamma_i(\lambda)$. This gives for any integers $M>N$ that
\begin{equation*}
\frac{1}{-\lambda}\sum_{i=N+1}^M \Gamma_i(-\lambda) \leq \sum_{i=N+1}^M \espalt{}{B_i} \leq \frac{1}{\lambda}\sum_{i=N+1}^M \Gamma_i(\lambda).
\end{equation*}
As both $(1/\lambda)\sum_{i=1}^\infty \Gamma_i(\lambda)$ and $-(1/\lambda)\sum_{i=1}^\infty \Gamma_i(-\lambda)$ exist and are finite for all $\lambda > 0$, it follows for any $\epsilon > 0$ there is some $N_{\epsilon}$ so that if $M,N\geq N_{\epsilon}$ then $\left|\sum_{i=N+1}^M \espalt{}{B_i}\right|\leq \epsilon$, proving that $\sum_{i=1}^n \espalt{}{B_i}$ is Cauchy and hence the limit exists and is finite.  We now claim that
\begin{equation}\label{eq: h_var}
\sum_{i=1}^\infty \varalt{}{B_i} < \infty,
\end{equation}
from which the almost sure convergence result follows from \cite[Theorem 1.4.2]{MR2760872} and the $L^2$ convergence result follows as $\sum_{i=1}^{\infty}\espalt{}{B_i}$ exists. But, \eqref{eq: h_var} holds by applying the inequality
\begin{equation*}
x^2 \leq \frac{2}{\lambda^2}\left(e^{\lambda x} + e^{-\lambda x}\right);\qquad x \in\reals,\lambda > 0,
\end{equation*}
to $x = \sum_{i=1}^N \left(B_i - \espalt{}{B_i}\right)$, using the independence of the $\cbra{B_i}_{i\in\N}$, Assumption \ref{ass: h_cgf}, and that $\sum_{i=1}^N \espalt{}{B_i}\rightarrow \sum_{i=1}^\infty \espalt{}{B_i}$ as $N\uparrow\infty$.
\end{proof}

\begin{proof}[Proof of Lemma \ref{lem: LM_SC}]

Clearly, $\filtg$ and $\filth$ satisfy Assumption \ref{ass: filt} and by construction $B = D + Y$ so that Assumption \ref{ass: claim_decomp} holds.  Due to the choice of $\sigma$ as the lower-triangular square root of $\Sigma$, Assumption \ref{ass: asset_mkt} is satisfied.  Indeed, the first $n$ assets only depend upon the first $n$ Brownian motions and hence $S$ is $\filtg$ adapted.   Furthermore, the $(\prob,\filtg;S)$-market is complete in view of the martingale representation theorem.  Here, the unique martingale measure $\qprob_0$ takes the form
\begin{equation*}
\frac{d\qprob_0}{d\prob}\bigg|_{\G_T} = \frac{d\tilde{\qprob}}{d\prob}\bigg|_{\G_T} = \mathcal{E}\left(\sum_{i=1}^n -\theta_iW^i_\cdot\right)_T,
\end{equation*}
and for this measure $\relent{\qprob_0}{\prob\big|_{\G_T}} = (1/2)\sum_{i=1}^n \theta_i^2 < \infty$. Lastly, Assumptions \ref{ass: h_n_mart} and \ref{ass: h_cgf} imply the integrability assumptions on $D$ and $Y$ in Assumption \ref{ass: abstract_int}.  To see this, as Assumption \ref{ass: h_n_mart} implies $\espalt{}{B_i^2} < \infty$ for each $i$, it holds that for any $0 < \eps < 1$ (recall \eqref{eq: qprob_0_density}):
\begin{equation*}
\begin{split}
\espalt{\qprob_0}{|D|^{1+\eps}} &= \espalt{}{Z^0_T\left|\sum_{i=1}^n B_i\right|^{1+\eps}}\leq \espalt{}{\left(Z^0_T\right)^{\tfrac{2}{1-\eps}}}^{\tfrac{1-\eps}{2}}\espalt{}{\left(\sum_{i=1}^n B_i\right)^2}^{\tfrac{1+\eps}{2}};\\
&\leq e^{\tfrac{T}{2}\frac{1+\eps}{1-\eps}\sum_{i=1}^n\theta^2_i}\left(n\sum_{i=1}^n \espalt{}{B^2_i}\right)^{\frac{1+\eps}{2}} < \infty.
\end{split}
\end{equation*}
Furthermore, the independence of the $\cbra{B_i}_{i\in\N}$ gives
\[
\espalt{}{e^{\lambda Y}} = \espalt{}{e^{\lambda\sum_{i=n+1}^\infty B_i}} = e^{\sum_{i=n+1}^\infty \Gamma_i(\lambda)} < \infty.
 \]
Having verified Assumptions \ref{ass: filt} -- \ref{ass: abstract_int}, Proposition \ref{prop: opt_invest_no_n} implies that for each $n$, in the $n^{th}$ market, the indifference price for $q$ units of $B$ is $p(q) = d  - (1/(qa))\log\left(\espalt{}{e^{-qa Y}}\right)$. By construction, $\qprob_0$ agrees with $\tilde{\qprob}$ on the sigma-algebra $\G_T$. Thus, we have from the definitions of $d$ in Assumption \ref{ass: asset_mkt} and $d^n$ in \eqref{eq: x1_def_lm}:
\begin{equation*}
\begin{split}
d &= \espalt{\qprob_0}{D} = \sum_{i=1}^n \espalt{\tilde{\qprob}}{B_i} = d^n;\qquad \log\left(\espalt{}{e^{-qaY}}\right) = \sum_{i=n+1}^\infty \Gamma_i(-qa).
\end{split}
\end{equation*}
The range of arbitrage free prices in \eqref{eq: arb_free_range_n} follows immediately from Lemma \ref{lem: arb_free_range_no_n}, Lemma \ref{lem: h_well_posed} (which shows that $\sum_{i=1}^\infty \espalt{}{B_i}$ exists) and the independence of the $\cbra{B_i}_{i\in\N}$. Indeed, these imply that $\essinf{\prob}{\sum_{i=n+1}^\infty B_i} = \sum_{i=n+1}^\infty \essinf{\prob}{B_i}$ as well as $\esssup{\prob}{\sum_{i=n+1}^\infty B_i} = \sum_{i=n+1}^\infty \esssup{\prob}{B_i}$.

For the sake of completeness, let us discuss $\essinf{\prob}{\sum_{i=n+1}^\infty B_i} = \sum_{i=n+1}^\infty \essinf{\prob}{B_i}$ in some detail. Without loss of generality we may assume that $\espalt{}{B_i}=0$ and we have that $\sum_{i=1}^\infty \espalt{}{B^{2}_i}<\infty$. Thus, $M_\infty = \sum_{i=n+1}^\infty B_i$ is well defined almost surely. It is clear that $\essinf{\prob}{\sum_{i=n+1}^\infty B_i} \geq \sum_{i=n+1}^\infty \essinf{\prob}{B_i}$ as for any $c<\sum_{i=n+1}^\infty \essinf{\prob}{B_i}$ one has $\prob\left[M_\infty<c\right]=0$. For the other direction, we let $M_m = \sum_{i=n+1}^{m+n} B_i$. Then $M = (M_m)_{m=1,2,\cdots}$ is a $L^{2}$ bounded $\F_m = \sigma(B_i, i=n+1,...,n+m)$ martingale. Hence, we may write $M_m = E[M_\infty | \F_m]$ which
immediately gives that
\[\sum_{i=n+1}^{n+m}\essinf{\prob}{B_i} = \essinf{\prob}{\sum_{i=n+1}^{n+m} B_i }=\essinf{\prob}{ M_{m}} \geq \essinf{\prob}{ M_{\infty}}= \essinf{\prob}{\sum_{i=n+1}^{\infty}B_i},\]
for  $m = 1,2,\cdots$. So, taking $m\uparrow\infty$ gives the result. Lastly, \eqref{eq: hat_qn_foc} follows immediately from \eqref{eq: hat_qn_foc_no_n} as  $\Lambda$ from \eqref{eq: h2_cgf} takes the form $\Lambda(\lambda) = \sum_{i=n+1}^\infty \Gamma_i(\lambda)$ and Lemma \ref{lem: Xi_prop} below shows that the derivative may be passed through the infinite sum.

\end{proof}

\begin{lemma}\label{lem: Xi_prop}
Let Assumptions \ref{ass: mu_sig}, \ref{ass: h_n_mart} and \ref{ass: h_cgf} hold.  For $i\in\N$ define $\Gamma_i$ as in \eqref{eq: nth_cgf}.  Then, for all $\lambda \in \R$, $\lim_{N\uparrow\infty}\sum_{i=1}^N\dot{\Gamma}_i(\lambda)$ exists, $-\infty < \sum_{i=1}^\infty \dot{\Gamma}_i(\lambda) < \infty$ and $(d/d\lambda)\left(\sum_{i=1}^\infty \Gamma_i(\lambda)\right) = \sum_{i=1}^\infty \dot{\Gamma}_i(\lambda)$.
\end{lemma}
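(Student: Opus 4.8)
The plan is to exploit the convexity of each $\Gamma_i$ to reduce everything to the already-known (conditional) convergence of $\sum_i \Gamma_i(\lambda)$ from Assumption \ref{ass: h_cgf}. The genuine difficulty is precisely that this convergence is only conditional, so a naive domination $\sum_i |\dot\Gamma_i(\lambda)| < \infty$ is unavailable and one must route the estimate on $\dot\Gamma_i$ through \emph{increments} of $\Gamma_i$. First I would record that, under Assumption \ref{ass: h_n_mart}, each $B_i$ has a moment generating function finite on all of $\R$, so $\Gamma_i$ is real-analytic and convex, hence $\dot\Gamma_i$ is finite, continuous and non-decreasing. Convexity of $\Gamma_i$ gives, for every $\lambda\in\R$ and every $h>0$, the two-sided bound
\[
\Gamma_i(\lambda) - \Gamma_i(\lambda-h) \;\le\; h\,\dot\Gamma_i(\lambda) \;\le\; \Gamma_i(\lambda+h) - \Gamma_i(\lambda).
\]

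Summing this over $i = N+1,\dots,M$ and invoking Assumption \ref{ass: h_cgf} at the three points $\lambda-h,\lambda,\lambda+h$, the two outer quantities are tails of convergent series and hence tend to $0$ as $M\ge N\to\infty$; therefore the partial sums of $\sum_i\dot\Gamma_i(\lambda)$ are Cauchy, which establishes that $\lim_{N}\sum_{i=1}^N\dot\Gamma_i(\lambda)$ exists and is finite in magnitude. Write $g(\lambda)$ for this limit and set $T_N(\lambda)=\sum_{i=1}^N\dot\Gamma_i(\lambda)$, $S_N(\lambda)=\sum_{i=1}^N\Gamma_i(\lambda)$, $S(\lambda)=\sum_{i=1}^\infty\Gamma_i(\lambda)$.

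For the term-by-term differentiation I would show $T_N\to g$ uniformly on compact subsets of $\R$; together with the pointwise convergence $S_N\to S$ and $S_N'=T_N$, the standard theorem on differentiating limits of functions then gives that $S$ is differentiable with $S'=g$, which is the assertion $(d/d\lambda)\bigl(\sum_i\Gamma_i(\lambda)\bigr)=\sum_i\dot\Gamma_i(\lambda)$. To obtain the uniform convergence, I would use that $S_N$ is convex and finite and converges pointwise to the finite function $S$ (Assumption \ref{ass: h_cgf}); by the classical fact that a sequence of convex functions on an open interval converging pointwise to a finite limit converges uniformly on compact subsets (Rockafellar, \emph{Convex Analysis}), the tail $R_N := S - S_N = \sum_{i>N}\Gamma_i$ satisfies $\sup_{|\mu|\le L+1}|R_N(\mu)|\to 0$ for every $L>0$. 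Letting $M\to\infty$ in the summed convexity bound with $h=1$ yields
\[
R_N(\lambda)-R_N(\lambda-1)\;\le\; g(\lambda)-T_N(\lambda)\;\le\; R_N(\lambda+1)-R_N(\lambda),
\]
so for $|\lambda|\le L$ one gets $|g(\lambda)-T_N(\lambda)|\le 2\sup_{|\mu|\le L+1}|R_N(\mu)|\to 0$ uniformly in $\lambda$, as required.

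The only delicate point is the one flagged at the outset: conditional convergence kills the obvious domination argument, and the fix is to sandwich $\dot\Gamma_i(\lambda)$ between $\Gamma_i$-increments whose series converge by Assumption \ref{ass: h_cgf}. Once that is in place, both the existence of $\sum_i\dot\Gamma_i(\lambda)$ and the legitimacy of interchanging $d/d\lambda$ with the sum follow cheaply, the latter because convexity automatically upgrades the pointwise convergence of the tails $R_N$ to local uniform convergence.
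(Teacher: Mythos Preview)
Your argument is correct and its core is the same as the paper's: sandwich $\dot\Gamma_i(\lambda)$ between the increments $\Gamma_i(\lambda)-\Gamma_i(\lambda-h)$ and $\Gamma_i(\lambda+h)-\Gamma_i(\lambda)$ via convexity, then use Assumption~\ref{ass: h_cgf} at $\lambda-h,\lambda,\lambda+h$ to conclude the partial sums are Cauchy. The paper takes $h=1$ and stops there, declaring the lemma proved.

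Where you go beyond the paper is in the justification of $(d/d\lambda)\sum_i\Gamma_i(\lambda)=\sum_i\dot\Gamma_i(\lambda)$. The paper's proof does not address this interchange explicitly; you supply it by upgrading the pointwise convergence $S_N\to S$ to local uniform convergence (convex functions, Rockafellar), feeding that back into the sandwich to get $T_N\to g$ locally uniformly, and then invoking the standard theorem on differentiating a uniformly convergent sequence of derivatives. This is a clean, self-contained way to close the gap; an alternative the paper could have in mind is that $S(\lambda)=\log\espalt{}{e^{\lambda Y}}$ with $Y=\sum_i B_i$ is a cumulant generating function finite on all of $\R$ and hence smooth, after which the sandwich inequality (summed and passed to the limit, then divided by $h\downarrow 0$) pins $g(\lambda)$ between the one-sided derivatives of $S$ and forces $S'=g$. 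Your route avoids appealing to smoothness of $S$ and works from the convex-analytic structure alone, which is arguably more in the spirit of the sandwich argument.
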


\begin{proof}
The short proof of this lemma was suggested by one of the referees of the paper. As $\Gamma_{i}$ is convex it follows that
\[
\sum_{i=k}^{n}\left(\Gamma_{i}(\lambda)-\Gamma_{i}(\lambda-1)\right)\leq \sum_{i=k}^{n}\dot{\Gamma}_{i}(\lambda)\leq \sum_{i=k}^{n}\left(\Gamma_{i}(\lambda+1)-\Gamma_{i}(\lambda)\right),
\]
which by Assumption \ref{ass: h_cgf} means that $\sum_{i=0}^{\cdot}\dot{\Gamma}_{i}(\lambda)$ is a Cauchy series and thus converges to a finite quantity. This completes the proof of the Lemma.
\end{proof}

\section{Proof of Proposition \ref{prop: ldp_indiff_px}}\label{Appendix:prop: ldp_indiff_px}
\begin{proof}[Proof of Proposition \ref{prop: ldp_indiff_px}]
(Regime 1) For the $\delta$ of Assumption \ref{ass: ldp}, let $\eps > 0$ be such that $\eps a < \delta$.  For $n$ large enough we may assume that $|q_n|\leq \eps r_n$.  From \eqref{eq: p_n_dec} it follows that
\begin{equation*}
-\frac{1}{a\eps r_n}\Lambda_n(-a\eps r_n) = p^n(\eps r_n) - d^n \leq p^n(q_n) - d^n \leq p^n(-\eps r_n) - d^n = \frac{1}{a\eps r_n}\Lambda_n(a\eps r_n).
\end{equation*}
Therefore, Varadhan's integral lemma yields
\begin{equation*}
\begin{split}
\liminf_{n\uparrow\infty} p^n(q_n)-d^n &\geq -\frac{1}{a\eps}\sup_{y\in\R}\left(-a\eps y - I(y)\right);\\
\limsup_{n\uparrow\infty} p^n(q_n)-d^n &\leq \frac{1}{a\eps}\sup_{y\in\R}\left(a\eps y - I(y)\right).
\end{split}
\end{equation*}
In view of Lemmas \ref{lem: rf_coerciv}, \ref{lem: ldp_zero_fact} below, we have for $\eps$ small enough that
\begin{equation*}
\begin{split}
\liminf_{n\uparrow\infty}p^n(q_n)-d^n &\geq y_{-\eps} + \frac{1}{\eps a} I(y_{-\eps}) \geq y_{-\eps};\\
\limsup_{n\uparrow\infty}p^n(q_n)-d^n &\leq y_{+\eps} - \frac{1}{\eps a} I(y_{+\eps}) \leq y_{+\eps},
\end{split}
\end{equation*}
for some $y_{-\eps}\in [l^{-\eps a},u^{-\eps a}]$, $y_{+\eps}\in [l^{\eps a},u^{\eps a}]$ where $l^\eps,u^\eps$ are defined in \eqref{eq: l_u_eps} below. Thus, by Lemma \ref{lem: ldp_zero_fact} we have that $y_{\pm\eps}\rightarrow 0$ as $\eps \downarrow 0$ proving that $\lim_{n\uparrow\infty} p^n(q_n)-d^n  = 0$.

(Regime 2). Now, assume that $\lim_{n\uparrow\infty} |q_n|/r_n = l \in (0,\infty)$.  First, assume $0 < l < -\underline{M}/a$. For $n$ large enough we may assume $(l - \gamma)r_n \leq q_n \leq (l + \gamma)r_n$ for some $\gamma >0$ such that $\underline{M} < -a(l +\gamma) < -a(l - \gamma) < 0$.  \eqref{eq: p_n_dec} then implies
\begin{equation*}
\begin{split}
p^n(q_n)-d^n &\leq p^n((l-\gamma)r_n)-d^n = -\frac{1}{(l-\gamma)ar_n}\Lambda_n(-(l-\gamma)ar_n);\\
p^n(q_n)-d^n &\geq p^n((l+\gamma)r_n)-d^n = -\frac{1}{(l+\gamma)ar_n}\Lambda_n((l+\gamma)ar_n).\\
\end{split}
\end{equation*}
By Varadhan's integral lemma
\begin{equation*}
\begin{split}
\limsup_{n\uparrow\infty} p^n(q_n) -d^n &\leq -\frac{1}{(l-\gamma)a}\sup_{y\in\R}\left(-(l-\gamma)ay - I(y)\right) = \inf_{y\in\R}\left(y + \frac{I(y)}{(l-\gamma)a}\right)\leq 0;\\
\liminf_{n\uparrow\infty} p^n(q_n) -d^n &\geq -\frac{1}{(l+\gamma)a}\sup_{y\in\R}\left(-(l+\gamma)ay - I(y)\right) = \inf_{y\in\R}\left(y + \frac{I(y)}{(l+\gamma)a}\right) > -\infty.\\
\end{split}
\end{equation*}
The function $\tau \mapsto \inf_{y\in\R}\left(y + \tau I(y)\right)$ for $\tau > 0$ is concave and hence continuous on the interior of it's effective domain. Therefore, taking $\gamma\downarrow 0$ in the above yields \eqref{eq: limit_px_ell_finite}.

Now, assume that $M_* > -\infty$ and  $l > -M_*/a$. From \eqref{eq: M_M_bounds} we have that $l > -\underline{M}/a$ and hence we can find a $\gamma > 0$ so that for $n$ large enough $q_n \geq (l-\gamma)r_n$ and such that $l-\gamma > -M_*/a \geq -\underline{M}/a$. As before, \eqref{eq: p_n_dec} implies
\begin{equation}\label{eq: large_ell_px_up}
p^n(q_n)-d^n \leq p^n((l-\gamma)r_n) - d^n = -\frac{1}{(l-\gamma)ar_n}\Lambda_n(-(l-\gamma)ar_n).
\end{equation}
By the definition of $\underline{M}$ we know that $\limsup_{n\uparrow\infty}(1/r_n)\Lambda_n(-(l-\gamma)ar_n) = \infty$. However, it is in fact true that $\lim_{n\uparrow\infty} (1/r_n)\Lambda_n(-(l-\gamma)ar_n) = \infty$. Indeed, assume there exists a sub-sequence (still labeled $n$) such that $\limsup_{n\uparrow\infty}(1/r_n)\Lambda_n(-(l-\gamma)ar_n) < \infty$. Varadhan's integral lemma applied to the subsequence (for which the LDP still holds) then implies that for $\gamma$ small enough
\begin{equation*}
\lim_{n\uparrow\infty}\frac{1}{r_n}\Lambda_n(-(l-2\gamma)ar_n) = \sup_{y\in\R}\left(-(l-2\gamma)a y - I(y)\right) < \infty.
\end{equation*}
Thus, for $\gamma$ small enough so that $l - 2\gamma > -M_*/a$ we have a contradiction to the definition of $M_*$. Thus, we have from \eqref{eq: large_ell_px_up} that $\lim_{n\uparrow\infty} p^n(q_n) - d^n = -\infty$.

The results for $q_n/r_n \rightarrow l < 0$ are very similar to that for $l > 0$.  Indeed, assume first that $-\bar{M}/a < l < 0$. For $n$ large enough we may assume $(l - \gamma)r_n \leq q_n \leq (l + \gamma)r_n$ for some $\gamma >0$ such that $0 < -a(l +\gamma) < -a(l - \gamma) < \bar{M}$.  \eqref{eq: p_n_dec} implies
\begin{equation*}
\begin{split}
p^n(q_n)-d^n &\leq p^n((l-\gamma)r_n)-d^n = -\frac{1}{(l-\gamma)ar_n}\Lambda_n(-(l-\gamma)ar_n);\\
p^n(q_n)-d^n &\geq p^n((l+\gamma)r_n)-d^n = -\frac{1}{(l+\gamma)ar_n}\Lambda_n(-(l+\gamma)ar_n).\\
\end{split}
\end{equation*}
Varadhan's integral lemma gives
\begin{equation*}
\begin{split}
\limsup_{n\uparrow\infty} p^n(q_n) -d^n &\leq -\frac{1}{(l-\gamma)a}\sup_{y\in\R}\left(-(l-\gamma)ay - I(y)\right) = \sup_{y\in\R}\left(y + \frac{I(y)}{(l-\gamma)a}\right)<\infty;\\
\liminf_{n\uparrow\infty} p^n(q_n) -d^n &\geq -\frac{1}{(l+\gamma)a}\sup_{y\in\R}\left(-(l+\gamma)ay - I(y)\right) = \sup_{y\in\R}\left(y + \frac{I(y)}{(l+\gamma)a}\right) \geq 0.\\
\end{split}
\end{equation*}
The function $\tau \mapsto \sup_{y\in\R}\left(y + \tau I(y)\right)$ for $\tau < 0$ is convex and hence continuous on it's effective domain. Therefore, taking $\gamma\downarrow 0$ in the above yields \eqref{eq: limit_px_ell_finite0}.

Next, assume that $M^*< \infty$ and $l < -M^*/a$. From \eqref{eq: M_M_bounds} we have that $l < -\bar{M}/a$ and hence we can find a $\gamma > 0$ so that for $n$ large enough $q_n \leq (l+\gamma)r_n$ and such that $l+\gamma < -M^*/a \leq -\bar{M}/a$. As before, \eqref{eq: p_n_dec} implies
\begin{equation*}
p^n(q_n)-d^n \geq p^n((l+\gamma)r_n) - d^n = -\frac{1}{(l+\gamma)r_n}\Lambda_n(-(l+\gamma)ar_n).
\end{equation*}
By the definition of $\bar{M}$  we know that $\limsup_{n\uparrow\infty}(1/r_n)\Lambda_n(-(l+\gamma)ar_n) = \infty$, but a similar argument to that above shows that in fact $\lim_{n\uparrow\infty} (1/r_n)\Lambda_n(-(l+\gamma)ar_n) = \infty$, and hence $\lim_{n\uparrow\infty} p^n(q_n) - d^n = \infty$.

(Regime 3) The proof for Regime 3 is nearly identical to that of Regime 2 and hence only the argument for $q_n/r_n \rightarrow \infty$ is given.  For any $M>0$ we can find $N$ large enough so that $q_n \geq M r_n$. \eqref{eq: p_n_dec} then implies
\begin{equation*}
p^n(q_n)-d^n \leq p^n(Mr_n) -d^n = -\frac{1}{Mr_n a}\Lambda_n(-Mar_n).
\end{equation*}
Now, if $M_*> -\infty$ then \eqref{eq: M_M_bounds} implies $\underline{M} >-\infty$ and hence for $M$ large enough so that $-aM < M_* < \underline{M}$ we have $\limsup_{n\uparrow\infty}(1/r_n)\Lambda_n(-Mar_n) = \infty$, but, in fact we must have $\lim_{n\uparrow\infty}(1/r_n)\Lambda_n(-Mar_n) = \infty$. Indeed, assume there exists a sub-sequence (still labeled $n$) such that $\limsup_{n\uparrow\infty}(1/r_n)\Lambda_n(-Mar_n) <\infty$. Then for $\gamma$ small enough so that $-a(M-\gamma) < M_*$
\begin{equation*}
\lim_{n\uparrow\infty}\frac{1}{r_n}\Lambda_n(-(M-\gamma)ar_n) = \sup_{y\in\R}\left(-(M-\gamma)a y - I(y)\right) < \infty.
\end{equation*}
This is a contradiction to the definition of $M_*$. Therefore, $\lim_{n\uparrow\infty} p^n(q_n) -d^n = -\infty$, proving the result if $M_* > -\infty$.  If $\underline{M} = -\infty$ we have that for all $M>0$ that
\begin{equation*}
\limsup_{n\uparrow\infty} p^n(q_n)-d^n \leq -\frac{1}{Ma}\sup_{y\in\R}\left(-Ma y - I(y)\right) = \inf_{y\in\R}\left(y + \frac{I(y)}{Ma}\right).
\end{equation*}
The right hand side above is decreasing in $M$ : taking $M\uparrow\infty$ gives
\begin{equation*}
\limsup_{n\uparrow\infty}p^n(q_n) - d^n \leq \lim_{M\uparrow\infty}\inf_{y\in\reals}\left(y + \frac{I(y)}{Ma}\right).
\end{equation*}
We now claim that
\begin{equation*}
\lim_{M\uparrow\infty}\inf_{y\in\reals}\left(y + \frac{I(y)}{Ma}\right) = \inf\cbra{y \ | \ I(y) < \infty},
\end{equation*}
which, if true, finishes the result.  First, it is clear as $I\geq 0$ that
\begin{equation*}
\lim_{M\uparrow\infty}\inf_{y\in\reals}\left(y + \frac{I(y)}{Ma}\right) = \lim_{M\uparrow\infty}\inf_{y\in\reals, I(y)<\infty }\left(y + \frac{I(y)}{Ma}\right) \geq \inf\cbra{y \ | \ I(y) < \infty}.
\end{equation*}
Now, let $y$ be such that $I(y) < \infty$. As $\inf_{y\in\reals}\left(y + I(y)/(Ma)\right) \leq y + I(y)/(Ma)$ it follows that $\lim_{M\uparrow\infty}\inf_{y\in\reals}\left(y + I(y)/(Ma)\right) \leq y$.  Taking $y\downarrow \inf\cbra{y\ | \ I(y) < \infty}$ gives the result.
\end{proof}

\section{Supporting Lemmas}

\begin{lemma}\label{lem: Xi_prop_no_n}
Define $\Lambda$ as in \eqref{eq: h2_cgf} and assume that $\Lambda(\lambda)$ is finite for all $\lambda\in \mathbb{R}$.  Then
\begin{enumerate}[(1)]
\item $\lim_{\lambda\uparrow\infty} (1/\lambda) \Lambda(\lambda) = \esssup{\prob}{Y}$.
\item $\lim_{\lambda\downarrow -\infty} (1/\lambda)\Lambda(\lambda) = \essinf{\prob}{Y}$.
\item For all $\lambda \in \R$, $\dot{\Lambda}(\lambda) = \espalt{}{Y e^{\lambda Y}}/\espalt{}{e^{\lambda Y}} \in\reals$. Additionally, $\Lambda(\lambda)$ is strictly convex which implies that the map $\lambda\mapsto \dot{\Lambda}(\lambda)$ is increasing in $\lambda$.
\end{enumerate}
\end{lemma}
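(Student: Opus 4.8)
The plan is to establish part (3) first, since parts (1) and (2) then follow from elementary exponential-Markov estimates that do not require differentiability.

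For part (3), I would argue by differentiating under the expectation. Because $\Lambda(\lambda) = \log\espalt{}{e^{\lambda Y}}$ is finite for all $\lambda\in\R$, the variable $Y$ has exponential moments of every order, and on a compact $\lambda$-interval $[-M,M]$ one has the domination $|Ye^{\lambda Y}| \le |Y|(e^{MY}+e^{-MY}) \le C_M(e^{(M+1)Y}+e^{-(M+1)Y})$ (using $|y|\le e^{|y|}$), whose right-hand side is $\prob$-integrable. This justifies $(d/d\lambda)\espalt{}{e^{\lambda Y}} = \espalt{}{Ye^{\lambda Y}}$, and dividing by $\espalt{}{e^{\lambda Y}}>0$ gives the stated formula for $\dot\Lambda(\lambda)\in\R$. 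A second differentiation, justified by the same kind of domination, identifies $\ddot\Lambda(\lambda)$ as the variance of $Y$ under the exponentially tilted measure with density $e^{\lambda Y}/\espalt{}{e^{\lambda Y}}$; this is $\ge 0$, and strictly positive unless $Y$ is $\prob$-a.s.\ constant. Hence $\Lambda$ is convex — strictly so in the (relevant) non-degenerate case — and consequently $\lambda\mapsto\dot\Lambda(\lambda)$ is increasing.

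For part (1), set $\bar y = \esssup{\prob}{Y}\in(-\infty,\infty]$. When $\bar y<\infty$, the a.s.\ bound $Y\le\bar y$ gives $\espalt{}{e^{\lambda Y}}\le e^{\lambda\bar y}$, hence $(1/\lambda)\Lambda(\lambda)\le\bar y$ for $\lambda>0$. For the reverse inequality I would fix $c<\bar y$, use $\prob\bra{Y>c}>0$ together with $\espalt{}{e^{\lambda Y}}\ge e^{\lambda c}\prob\bra{Y>c}$ to get $(1/\lambda)\Lambda(\lambda)\ge c+(1/\lambda)\log\prob\bra{Y>c}\to c$ as $\lambda\uparrow\infty$, and then let $c\uparrow\bar y$. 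When $\bar y=\infty$, the same lower estimate with $c\uparrow\infty$ forces the limit to be $+\infty$. Part (2) I would obtain by applying part (1) to $-Y$, whose cumulant generating function is $\lambda\mapsto\Lambda(-\lambda)$ and whose essential supremum is $-\essinf{\prob}{Y}$, and then substituting $\mu=-\lambda$.

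I expect the only genuinely technical step to be the domination estimate legitimizing differentiation under the expectation in part (3); once that is in hand the remainder is bookkeeping. I would also flag that strict convexity, as literally stated, presumes $Y$ is not $\prob$-a.s.\ constant — which is the case of interest here, since otherwise $\essinf{\prob}{Y}=\esssup{\prob}{Y}$ and the arbitrage-free interval in Lemma \ref{lem: arb_free_range_no_n} collapses to a point.
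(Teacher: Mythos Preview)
Your proposal is correct and follows essentially the same approach as the paper: parts (1)--(2) via the elementary Chernoff-type bound $\espalt{}{e^{\lambda Y}}\ge e^{\lambda c}\prob\bra{Y>c}$, and part (3) via dominated convergence using the exponential integrability of $Y$. Your domination $|y|\le e^{|y|}$ leading to $e^{(M+1)Y}+e^{-(M+1)Y}$ differs cosmetically from the paper's inequality $|x|e^{\lambda x}\le C(\lambda)(e^{2\lambda x}+e^{-2\lambda x})$, and you are slightly more explicit in identifying $\ddot\Lambda$ as a tilted variance and in flagging the non-degeneracy caveat for strict convexity, but the substance is the same.
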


\begin{proof}
Clearly, for $\lambda > 0$, we have
\[
(1/\lambda)\Lambda(\lambda) = (1/\lambda)\log\left(\espalt{\prob}{e^{\lambda Y}}\right) \leq \esssup{\prob}{Y}.
\]
 Now, let $m>0$ be such that $\prob\bra{Y>m} > 0$. As $Y\geq m 1_{Y>m}$,  we then have
\begin{equation*}
\frac{1}{\lambda}\Lambda(\lambda) \geq m + \frac{1}{\lambda}\log\left(\espalt{\prob}{1_{Y>m}}\right).
\end{equation*}
Taking $\lambda\uparrow\infty$ gives that $\liminf_{\lambda\uparrow\infty} (1/\lambda)\Lambda(\lambda) \geq m$, and hence taking $m\uparrow\esssup{\prob}{Y}$ gives $\lim_{\lambda\uparrow\infty} (1/\lambda)\Lambda(\lambda) = \esssup{\prob}{Y}$. A similar calculation shows that $\lim_{\lambda\downarrow-\infty}(1/\lambda)\Lambda(\lambda) = \essinf{\prob}{Y}$, proving both $(1)$ and $(2)$ above. That $\dot{\Lambda}(\lambda) = \espalt{}{Y e^{\lambda Y}}/\espalt{}{e^{\lambda Y}}$ for $\lambda\in\reals$ follows by Assumption \ref{ass: abstract_int}, the dominated convergence theorem and the inequality
\begin{equation}
|x|e^{\lambda x} \leq C(\lambda)\left(e^{2\lambda x} + e^{-2\lambda x}\right);\qquad x\in\reals\label{Eq:Inequality1},
\end{equation}
for some constant $C(\lambda)<\infty$. Now, by Jensen's inequality, $\espalt{}{e^{\lambda Y}} \geq e^{-|\lambda|\espalt{}{|Y|}}$ and Assumption \ref{ass: abstract_int} ensures that $\espalt{}{|Y|} < \infty$.  Furthermore, as for any $\lambda\in\R$ there exists some constant $C(\lambda)$ so that (\ref{Eq:Inequality1}) holds,
it follows again from Assumption \ref{ass: abstract_int} that $|\espalt{}{Ye^{\lambda Y}}| < \infty$, which yields $(3)$ finishing the proof.

\end{proof}

\begin{lemma}\label{lem: rf_coerciv}
Let Assumptions \ref{ass: asympt_sc} and \ref{ass: ldp} hold. For $\delta$ and $I$ as in Assumption \ref{ass: ldp}, $\liminf_{|y|\uparrow\infty} I(y)/|y| \geq \delta$.
\end{lemma}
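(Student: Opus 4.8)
The plan is to combine an exponential Chebyshev (Chernoff) bound, which turns the moment condition \eqref{eq: cgf_limit_int} into a tail--decay estimate for $Y_n$, with the large deviations \emph{lower} bound of Definition \ref{Def:LDP}(ii) applied to open half--lines. The point is that an \emph{upper} bound on the probability of an open set, fed into the LDP lower bound, forces $I$ to be large on that set; pairing the Chernoff bound with the LDP upper bound of Definition \ref{Def:LDP}(iii) would instead give nothing useful.

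First I would treat the right tail. Fix $t\in\R$. Since $\delta r_n>0$, one has $\mathbf{1}_{\{Y_n>t\}}\le e^{\delta r_n(Y_n-t)}$, so for every $n$
\begin{equation*}
\prob^n\bra{Y_n>t}\le e^{-\delta r_n t}\,\espalt{\prob^n}{e^{\delta r_n Y_n}}=e^{-\delta r_n t+\Lambda_n(\delta r_n)},
\end{equation*}
the right side being finite by Assumption \ref{ass: abstract_int}. Applying $(1/r_n)\log(\cdot)$ and then $\varliminf_{n\uparrow\infty}$, and using $\varlimsup_{n\uparrow\infty}(1/r_n)\Lambda_n(\delta r_n)=:C_+<\infty$ from Assumption \ref{ass: ldp}, I obtain $\varliminf_{n\uparrow\infty}(1/r_n)\log\prob^n\bra{Y_n\in(t,\infty)}\le-\delta t+C_+$. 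On the other hand, Definition \ref{Def:LDP}(ii) applied to the open set $(t,\infty)$ gives $\varliminf_{n\uparrow\infty}(1/r_n)\log\prob^n\bra{Y_n\in(t,\infty)}\ge-\inf_{z>t}I(z)$. Combining the two yields $\inf_{z>t}I(z)\ge\delta t-C_+$ for every $t\in\R$. Hence, for any $y$ and any $t<y$ we have $I(y)\ge\inf_{z>t}I(z)\ge\delta t-C_+$; letting $t\uparrow y$ gives $I(y)\ge\delta y-C_+$ for all $y$.

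Rerunning the same argument with $-Y_n$ in place of $Y_n$ (equivalently, using the open half--line $(-\infty,-t)$, the Chernoff bound $\prob^n\bra{Y_n<-t}\le e^{-\delta r_n t+\Lambda_n(-\delta r_n)}$, and $C_-:=\varlimsup_{n\uparrow\infty}(1/r_n)\Lambda_n(-\delta r_n)<\infty$) yields $I(y)\ge-\delta y-C_-$ for all $y$. Putting the two estimates together, $I(y)/|y|\ge\delta-\max(C_+,C_-)/|y|$ for every $y\neq0$, and letting $|y|\uparrow\infty$ gives $\varliminf_{|y|\uparrow\infty}I(y)/|y|\ge\delta$. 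I do not foresee a real obstacle; the only things to watch are the direction of the pairing between the Chernoff (upper) bound and the LDP lower bound, and bookkeeping of $\varliminf$ versus $\varlimsup$ when passing to the limit in $n$ (the chain $\varliminf_n a_n\le\varliminf_n b_n\le\varlimsup_n b_n$ is used with $a_n=(1/r_n)\log\prob^n\bra{Y_n>t}$ and $b_n=-\delta t+(1/r_n)\Lambda_n(\delta r_n)$).
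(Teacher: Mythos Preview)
Your proof is correct, but it takes a different route from the paper's. The paper invokes Varadhan's integral lemma: for any $\eps\in(-\delta,\delta)$ the moment condition \eqref{eq: cgf_limit_int} ensures the hypotheses of Varadhan's lemma hold, giving $\Gamma(\eps)=\lim_n(1/r_n)\Lambda_n(\eps r_n)=\sup_{y}(\eps y - I(y))<\infty$; rearranging yields $I(y)/y\ge \eps - \Gamma(\eps)/y$ for $y>0$, and one then lets $y\uparrow\infty$ followed by $\eps\uparrow\delta$ (and symmetrically for $y<0$). You instead bypass Varadhan entirely, pairing the Chernoff tail bound at the endpoint $\eps=\delta$ with the LDP \emph{lower} bound on open half-lines to obtain $I(y)\ge \delta y - C_+$ directly, where $C_+=\varlimsup_n(1/r_n)\Lambda_n(\delta r_n)$. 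Your approach is more elementary and self-contained---it uses only the raw LDP lower bound and the finiteness of the $\varlimsup$ at $\pm\delta$, and it avoids the extra $\eps\uparrow\delta$ limit by working at the boundary from the start. The paper's approach is shorter given that Varadhan's lemma is already deployed throughout Section~\ref{S:Limit_LDP_no_n}, so it costs nothing to quote. Both arrive at the same linear lower envelope $I(y)\ge \delta|y|-C$.
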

\begin{proof}
In view of \eqref{eq: cgf_limit_int} and Varadhan's integral lemma for all $\eps \in (-\delta,\delta)$ it holds that
\begin{equation*}
\Gamma(\eps)=\lim_{n\uparrow\infty} \frac{1}{r_n}\log\left(\espalt{\prob^n}{e^{\eps r_n Y_n}}\right) = \sup_{y\in\R}\left(\eps y - I(y)\right) < \infty.
\end{equation*}
For $\eps > 0$ this gives for $y>0$ that $I(y)/y \geq \eps - \Gamma(\eps)/y$, from which the result follows by taking $y\uparrow\infty$ and $\eps\uparrow\delta$.  For $\eps < 0$ this gives for $y<0$ that $I(y)/(-y) \geq -\eps - \Gamma(\eps)/(-y)$, from which the result follows by taking $y\downarrow-\infty$ and $\eps\downarrow -\delta$.
\end{proof}

\begin{lemma}\label{lem: ldp_zero_fact}
Let Assumptions \ref{ass: asympt_sc} and \ref{ass: ldp} hold. Let $\delta$ be as in Assumption \ref{ass: ldp}. For any $\eps \in (-\delta,\delta)$ set
\begin{equation}\label{eq: l_u_eps}
l^\eps = \inf\cbra{y: y\in\argmax_{y\in\R}\left(\eps y - I(y)\right)};\qquad u^{\eps} = \sup\cbra{y:\ \argmax_{y\in\R}\left(\eps y - I(y)\right)}.
\end{equation}
Then $\lim_{\eps \downarrow 0} l^\eps  = 0 = \lim_{\eps\downarrow 0}u^{\eps}$.
\end{lemma}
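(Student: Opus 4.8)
The plan is to show that the maximizer set $A^\eps = \argmax_{y\in\R}\left(\eps y - I(y)\right)$ collapses to $\{0\}$ as $\eps\downarrow 0$, exploiting that $I$ is a good (hence lower semicontinuous) rate function vanishing only at $0$, together with the coercivity estimate $\liminf_{|y|\uparrow\infty} I(y)/|y|\geq \delta$ supplied by Lemma \ref{lem: rf_coerciv}.

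First I would check that for each $\eps\in(0,\delta)$ the set $A^\eps$ is a nonempty compact subset of $[0,\infty)$, so that $l^\eps$ and $u^\eps$ are finite and nonnegative. Nonemptiness and compactness follow because $y\mapsto \eps y - I(y)$ is upper semicontinuous (as $I$ is l.s.c.\ by Definition \ref{Def:LDP}) and, by Lemma \ref{lem: rf_coerciv}, satisfies $I(y)-\eps y\geq |y|\left(I(y)/|y|-|\eps|\right)\to\infty$ as $|y|\uparrow\infty$ since $|\eps|<\delta$; hence it attains its (finite) supremum $M^\eps$, and the maximizer set $\{y:\eps y-I(y)\geq M^\eps\}$ is closed and bounded. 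Since $M^\eps\geq \eps\cdot 0-I(0)=0$ while $\eps y-I(y)<-I(y)\leq 0$ for $y<0$ and $\eps>0$, every maximizer is nonnegative. Moreover, fixing any $\eps_0\in(0,\delta)$, Lemma \ref{lem: rf_coerciv} provides an $R=R(\eps_0,\delta)<\infty$ with $I(y)\geq\tfrac12(\eps_0+\delta)y$ for $y\geq R$, so for every $\eps\in(0,\eps_0]$ and every $y\geq R$ one has $\eps y-I(y)\leq\eps_0 y-I(y)<0\leq M^\eps$; thus $A^\eps\subseteq[0,R)$ \emph{uniformly} in $\eps\in(0,\eps_0]$, and in particular $0\leq l^\eps\leq u^\eps\leq R$.

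Since $0\leq l^\eps\leq u^\eps$, it suffices to prove $\limsup_{\eps\downarrow 0}u^\eps=0$. Suppose for contradiction that $\limsup_{\eps\downarrow 0}u^\eps=c$ with $c\in(0,R]$, and pick $\eps_k\downarrow 0$ with $u^{\eps_k}\to c$. Because $u^{\eps_k}\in A^{\eps_k}$ (the set is compact), we have $\eps_k u^{\eps_k}-I(u^{\eps_k})=M^{\eps_k}\geq 0$, hence $0\leq I(u^{\eps_k})\leq\eps_k u^{\eps_k}\leq\eps_k R\to 0$, so $I(u^{\eps_k})\to 0$. Lower semicontinuity of $I$ then gives $I(c)\leq\liminf_k I(u^{\eps_k})=0$, so $I(c)=0$ and therefore $c=0$ by Assumption \ref{ass: ldp}, contradicting $c>0$. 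Thus $\lim_{\eps\downarrow 0}u^\eps=0$, and being squeezed between $0$ and $u^\eps$, also $\lim_{\eps\downarrow 0}l^\eps=0$. The case $\eps\uparrow 0$ needed in the proof of Proposition \ref{prop: ldp_indiff_px} is entirely symmetric (replace $y$ by $-y$, or equivalently $Y_n$ by $-Y_n$).

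The only real obstacle is the order of quantifiers: one must first extract the uniform confinement $A^\eps\subseteq[0,R)$ from the coercivity bound \emph{before} letting $\eps\downarrow 0$, since a priori $u^\eps$ might run off to $+\infty$; once that uniform bound is in place, the rest is a one-line lower-semicontinuity argument.
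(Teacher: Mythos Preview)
Your proof is correct and follows essentially the same route as the paper's: both arguments first use the coercivity estimate of Lemma \ref{lem: rf_coerciv} to confine all maximizers to a fixed compact interval uniformly in small $\eps$, and then derive a contradiction from the lower semicontinuity of $I$ together with $I(y)=0\Leftrightarrow y=0$. Your version is slightly tidier in that you work directly with $u^{\eps_k}\in A^{\eps_k}$ (using compactness of the maximizer set) rather than extracting an auxiliary sequence of near-maximizers, but the underlying idea is identical.
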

\begin{proof}
This lemma should be known in the literature, but given that we could not find an exact reference, we provide a proof. As $I(y) =0 \Leftrightarrow y = 0$ we have $u^{\eps}\leq 0$ for $\eps < 0$, $0\leq l^{\eps}$ for $\eps > 0$ and $l^{\eps} = u^{\eps} = 0$ for $\eps = 0$.  Furthermore, by Lemma \ref{lem: rf_coerciv} we know for $|\eps| < \delta/2$ that $-K \leq l^\eps \leq u^{\eps} \leq K$ for some $K>0$ which does not depend upon $\eps$.  Now, let $\eps < 0, \eps\rightarrow 0$ and assume by way of contradiction that $l^{\eps}\rightarrow -l < 0$ for some $l>0$.  By definition of $l^{\eps}$ and this implies there exists a sequence $y_{\eps}\rightarrow y < -l/2$ such that for each $\eps$, $y_\eps\in\argmax_{y\in\R}\left(\eps y - I(y)\right)$. Therefore, we have that
\begin{equation*}
0 \leq \liminf_{\eps\downarrow 0} \left(\eps y_{\eps} - I(y_\eps)\right) \leq -I(y),
\end{equation*}
where the last inequality follows by the lower semi-continuity of $I$.  This gives  $I(y)\leq 0$ which by Assumption \ref{ass: ldp} is impossible as $y < -l/2$ and $I(y) =0$ if and only if $y=0$. Thus the result follows for $\eps < 0,\eps\rightarrow 0$ as we already know that $u^{\eps}\leq 0$.  A similar argument for $\eps > 0,\eps\rightarrow 0$ finishes the proof.
\end{proof}

\begin{lemma}\label{lem: exp_tilt_fact}
Let Assumptions \ref{ass: asympt_sc} and \ref{ass: ldp} hold.  Let $\delta$ be as in Assumption \ref{ass: ldp}. For $\eps\in (-\delta,\delta)$ define
\begin{equation*}
p^{\eps}_n = \frac{\espalt{\prob^n}{Y_n e^{\eps r_n Y_n}}}{\espalt{\prob^n}{e^{\eps r_n Y_n}}}.
\end{equation*}
Then
\begin{equation*}
0 = \liminf_{\eps\rightarrow 0}\liminf_{n\uparrow\infty} p^{\eps}_n = \limsup_{\eps\rightarrow 0}\limsup_{n\uparrow\infty} p^{\eps}_n.
\end{equation*}
\end{lemma}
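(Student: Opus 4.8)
The plan is to recognize $p_n^{\eps}$ as the derivative of the rescaled cumulant generating function and to combine convexity with Varadhan's integral lemma. Set $f_n(\eps) = r_n^{-1}\Lambda_n(\eps r_n)$ for $\eps\in\R$. By part~(3) of Lemma~\ref{lem: Xi_prop_no_n} applied to $\Lambda_n$ (which is finite on all of $\R$ by Assumption~\ref{ass: asympt_sc} via Assumption~\ref{ass: abstract_int}) together with the chain rule, each $f_n$ is convex and differentiable with $f_n(0)=0$ and $\dot f_n(\eps)=\dot\Lambda_n(\eps r_n)=p_n^{\eps}$. Since $(-\delta,\delta)\subseteq(\underline M,\bar M)$, formula \eqref{eq: varad_pos} gives $f_n(\eps)\to\Gamma(\eps):=\sup_{y\in\R}\left(\eps y - I(y)\right)$ for every $\eps\in(-\delta,\delta)$. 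The limit $\Gamma$ is convex and finite on $(-\delta,\delta)$, and since $I\geq 0$ with $I(0)=0$ we get $\Gamma\geq 0$ and $\Gamma(0)=0$, so $0$ is a global minimum of $\Gamma$.

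The heart of the proof — and the step I expect to be the main obstacle — is to upgrade this to: $\Gamma$ is differentiable at $0$ with $\Gamma'(0)=0$. This is exactly where the hypothesis $I(y)=0\Leftrightarrow y=0$ is used. Because $0$ minimizes the convex function $\Gamma$, we have $\Gamma'(0^-)\leq 0\leq\Gamma'(0^+)$, so it suffices to show $\Gamma'(0^+)\leq 0$ (the bound $\Gamma'(0^-)\geq 0$ being symmetric). Fix $\eta>0$. By Lemma~\ref{lem: rf_coerciv} there is $K>\eta$ with $I(y)\geq (\delta/2)|y|$ for $|y|\geq K$, while on the compact set $\{y:\eta\leq y\leq K\}$ the rate function $I$ is lower semicontinuous (Definition~\ref{Def:LDP}(i)) and strictly positive (as $0$ is excluded), hence bounded below by a positive constant; together these give $c:=\inf_{y\geq\eta}I(y)/(y-\eta)>0$. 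For $0<\eps\leq c$ one then has $\eps y - I(y)\leq \eta\eps$ for all $y$ (trivially for $y\leq\eta$; and $\eps(y-\eta)\leq c(y-\eta)\leq I(y)$ for $y>\eta$), so $\Gamma(\eps)\leq\eta\eps$ and hence $\Gamma'(0^+)=\lim_{\eps\downarrow 0}\Gamma(\eps)/\eps\leq\eta$. Letting $\eta\downarrow 0$ yields $\Gamma'(0^+)\leq 0$, and the mirror argument with $y\leq-\eta$ gives $\Gamma'(0^-)\geq 0$; thus $\Gamma$ is differentiable at $0$ with $\Gamma'(0)=0$.

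It then remains to pass to the limit. For $0<\eps<\delta/2$, convexity and differentiability of $f_n$ sandwich the derivative between two secant slopes,
\[
\frac{f_n(\eps)-f_n(\eps/2)}{\eps/2}\ \leq\ p_n^{\eps}=\dot f_n(\eps)\ \leq\ \frac{f_n(2\eps)-f_n(\eps)}{\eps},
\]
and letting $n\uparrow\infty$ (using $f_n\to\Gamma$ pointwise and $f_n(0)=0=\Gamma(0)$) the outer terms converge to $\big(\Gamma(\eps)-\Gamma(\eps/2)\big)/(\eps/2)$ and $\big(\Gamma(2\eps)-\Gamma(\eps)\big)/\eps$ respectively; rewriting each as a combination of difference quotients of $\Gamma$ at $0$, both tend to $\Gamma'(0^+)=0$ as $\eps\downarrow 0$. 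Hence $\lim_{\eps\downarrow 0}\liminf_{n}p_n^{\eps}=\lim_{\eps\downarrow 0}\limsup_{n}p_n^{\eps}=0$. The analogous sandwich for $-\delta/2<\eps<0$ (now with $2\eps<\eps<\eps/2$) gives the same with $\eps\uparrow 0$ and $\Gamma'(0^-)=0$. Combining the one-sided limits proves $\liminf_{\eps\to 0}\liminf_{n\uparrow\infty}p_n^{\eps}=\limsup_{\eps\to 0}\limsup_{n\uparrow\infty}p_n^{\eps}=0$, as claimed.
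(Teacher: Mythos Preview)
Your proof is correct and follows the same conceptual route as the paper: recognize $p_n^{\eps}=\dot f_n(\eps)$ for the rescaled cumulant generating function $f_n$, use Varadhan's lemma to pass to the limit $\Gamma(\eps)=\sup_y(\eps y-I(y))$, and exploit convexity to sandwich $p_n^{\eps}$ between difference quotients of $\Gamma$ that vanish as $\eps\to 0$. The organization differs somewhat: the paper phrases the sandwich via the subdifferential inequality (invoking Proposition~\ref{prop: opt_qn_no_n} to identify $\eps r_n$ as the minimizer of $\lambda\mapsto -\lambda p_n^\eps+\Lambda_n(\lambda)$) and then shows every $l^\eps\in\partial\Gamma(\eps)$ tends to $0$ by appealing to Lemma~\ref{lem: ldp_zero_fact} on the argmax sets. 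Your argument is more self-contained: you use the secant bounds directly and prove $\Gamma'(0)=0$ by the elementary estimate $\Gamma(\eps)\leq\eta\eps$ for $\eps$ small, obtained from the coercivity Lemma~\ref{lem: rf_coerciv} together with lower semicontinuity of $I$ on compacta. This avoids the auxiliary Lemma~\ref{lem: ldp_zero_fact} and the detour through the optimal-quantity proposition, at the cost of a short ad hoc computation; conceptually the two arguments are equivalent.
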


\begin{proof}

Recall the function $\Lambda_n(\lambda) =  \log\left(\espalt{\prob^n}{e^{\lambda Y_n}}\right), \lambda\in\R$ from \eqref{eq: Y_n_cgf}. By Assumption \ref{ass: asympt_sc}, $\Lambda_n(\lambda)$ is strictly convex with $\dot{\Lambda}_n(\lambda) = \espalt{\prob^n}{Y_n e^{\lambda Y_n}}/\espalt{\prob^n}{e^{\lambda Y_n}}$.  Now, define the function $\Lambda(\lambda)= \sup_{y\in\R}\left(\lambda y - I(y)\right)$. Note that by construction, $\Lambda(\lambda)$ is convex. Furthermore, (\ref{eq: varad_pos}) states that $(1/r_n)\Lambda_n(\eps r_n) \rightarrow \Lambda(\eps)$ as $n\uparrow\infty$ for $\eps\in (-\delta,\delta)$.

By Proposition \ref{prop: opt_qn_no_n}, the map $\lambda\mapsto -\lambda p^{\eps}_n + \Lambda_n(\lambda)$ is minimized uniquely at $\lambda = \eps r_n$. This gives for all $\gamma\in\R$ that
\begin{equation}\label{eq: n_subdiff}
-\eps r_n p^{\eps}_n + \Lambda_n(\eps r_n) \leq -\gamma p^{\eps}_n + \Lambda_n(\gamma).
\end{equation}
Taking $\gamma$ to be $(\eps + \lambda)r_n$ where $\lambda>0$ is such that $\eps +\lambda < \delta$ it follows that
\begin{equation*}
-\eps r_n p^{\eps}_n + \Lambda_n(\eps r_n) \leq -(\eps+\lambda)r_n p^{\eps}_n + \Lambda_n((\eps+\lambda)r_n).
\end{equation*}
Canceling out the $-\eps p^{\eps}_n$ terms, dividing by $\lambda r_n$ and rearranging terms gives
\begin{equation*}
p^{\eps}_n \leq \frac{1}{\lambda}\left(\frac{1}{r_n}\Lambda_n\left((\eps+\lambda)r_n\right) - \frac{1}{r_n}\Lambda_n(\eps r_n)\right).
\end{equation*}
Taking $n\uparrow\infty$ gives
\begin{equation*}
\liminf_{n\uparrow\infty} p^{\eps}_n\leq \limsup_{n\uparrow\infty} p^\eps_n \leq \frac{1}{\lambda}\left(\Lambda(\eps+\lambda) - \Lambda(\eps)\right).
\end{equation*}
Taking $\lambda\downarrow 0$ gives
\begin{equation*}
\liminf_{n\uparrow\infty} p^{\eps}_n\leq \limsup_{n\uparrow\infty} p^\eps_n \leq \dot{\Lambda}_{+}(\eps),
\end{equation*}
where $\dot{\Lambda}_{+}(\eps)$ is the right derivative of $\Lambda$  at $\eps$. Similarly, coming back to \eqref{eq: n_subdiff}, taking $\gamma = (\eps-\lambda)r_n$ where $\lambda>0$ is such that $-\delta < -\eps -\gamma$ it follows that
\begin{equation*}
-\limsup_{n\uparrow\infty} p^{\eps}_n\leq -\liminf_{n\uparrow\infty} p^\eps_n \leq \dot{\Lambda}_{-}(\eps).
\end{equation*}
where $\dot{\Lambda}_{-}(\eps)$ is the left derivative of $\Lambda$  at $\eps$.
Therefore, by \cite[Theorem 23.2]{MR1451876} it follows that $\limsup_{n\uparrow\infty} p^{n}_{\eps},\liminf_{n\uparrow\infty} p^{n}_{\eps}  = \partial \Lambda(\eps)$. Now let $l^{\eps}\in\partial\Lambda(\eps)$. The claim is that $\lim_{\eps\rightarrow 0} |l^{\eps}| = 0$.  To see this, assume first by way of contradiction that there exits some $\tau>0$ such that $l^{\eps_k}\geq \tau$ for some sequence $\eps_k\rightarrow 0$.  Take $0 < \lambda < \delta$ such that $\lambda > \eps_k$ for all $k$ large enough.  By definition of the sub-differential it follows that
\begin{equation*}
\Lambda(\lambda) \geq \Lambda(\eps_k) + l^{\eps_k}(\lambda-\eps_k) \geq \Lambda(\eps_k) + \tau(\lambda - \eps_k).
\end{equation*}
For $\lambda$ small enough (still larger that $\eps_k$) Lemma \ref{lem: rf_coerciv} implies there exists some $y^{\lambda}\in\argmax_{y\in\R}(\lambda y - I(y))$ so that $\Lambda(\lambda) = \lambda y^{\lambda} - I(y^{\lambda})$. This implies
\begin{equation*}
\lambda y^{\lambda} - I(y^{\lambda}) \geq \Lambda(\eps_k) + \tau(\lambda-\eps_k).
\end{equation*}
As $\Lambda$ is convex, finite in $(-\delta,\delta)$  and $\Lambda(0) =0$, it follows by the continuity of $\Lambda$ in $(-\delta,\delta)$ that taking $k\uparrow\infty$ yields $\lambda y^{\lambda} - I(y^{\lambda}) \geq \tau\lambda$, or $y^{\lambda} \geq \tau + I(y^{\lambda})/\lambda \geq \tau$. Taking $\lambda\downarrow 0$ and using Lemma \ref{lem: ldp_zero_fact} gives that $0\geq \tau$, a contradiction. Similarly, assume by way of contradiction that there exits some $\tau>0$ such that $l^{\eps_k}\leq -\tau$ for some sequence $\eps_k\rightarrow 0$.  Take $-\delta < \lambda < 0$ such that $\lambda < \eps_k$.  By definition of the sub-differential it follows that
\begin{equation*}
\Lambda(\lambda) \geq \Lambda(\eps_k) + l^{\eps_k}(\lambda-\eps_k) \geq \Lambda(\eps_k) - \tau(\lambda - \eps_k).
\end{equation*}
For $\lambda$ small enough magnitude (though still less than $\eps_k$) Lemma \ref{lem: rf_coerciv} implies there exists some $y^{\lambda}\in\argmax_{y\in\R}(\lambda y - I(y))$ so that $\Lambda(\lambda) = \lambda y^{\lambda} - I(y^{\lambda})$. This implies
\begin{equation*}
\lambda y^{\lambda} - I(y^{\lambda}) \geq \Lambda(\eps_k) - \tau(\lambda-\eps_k).
\end{equation*}
As above, by taking $k\uparrow\infty$ one obtains $-y^{\lambda} \geq \tau + I(y^{\lambda})/(-\lambda) \geq \tau$. Taking $\lambda\downarrow 0$ and using Lemma \ref{lem: ldp_zero_fact} gives that $0\geq \tau$, a contradiction.  Thus, it follows that $|l^{\eps}|\rightarrow 0$ as $\eps \rightarrow 0$ for all $l^{\eps}\in \partial \Lambda(\eps)$ and hence the result follows.
\end{proof}

\bibliographystyle{siam}
\def\polhk#1{\setbox0=\hbox{#1}{\ooalign{\hidewidth
  \lower1.5ex\hbox{`}\hidewidth\crcr\unhbox0}}}


\def\polhk#1{\setbox0=\hbox{#1}{\ooalign{\hidewidth
  \lower1.5ex\hbox{`}\hidewidth\crcr\unhbox0}}}

\end{document}